\documentclass[11pt]{article}
\usepackage{amssymb,amsmath,bm}
\usepackage[colorlinks=true, urlcolor=blue,linkcolor=blue, citecolor=blue]{hyperref}
\usepackage{mathrsfs,verbatim}
\usepackage{float}
\usepackage{caption}
\usepackage{constants,color}
\usepackage{enumerate}
\usepackage{bbm}
\usepackage{appendix}
\topmargin -.75in \textwidth 6.5in \oddsidemargin -.05in \textheight
9.3in
\usepackage{tikz}
\usepackage{tkz-tab}
\usepackage{tikz-qtree}

\usetikzlibrary{arrows}
%\usetikzlibrary{automata}
\usetikzlibrary{positioning}
\newcommand{\E}{\mathbb{E}}
\newcommand{\Pro}{\mathbb{P}}
\newcommand{\Var}{\mathrm{Var}}
\newcommand{\Cov}{\mathrm{Cov}}

\newcommand{\1}{\mathbf{1}}

\begin{document}

\newcommand{\bea}{\begin{eqnarray}}
\newcommand{\ena}{\end{eqnarray}}
\newcommand{\beas}{\begin{eqnarray*}}
\newcommand{\enas}{\end{eqnarray*}}
\newcommand{\beq}{\begin{equation}}
\newcommand{\enq}{\end{equation}}
\def\qed{\hfill \mbox{\rule{0.5em}{0.5em}}}
\newcommand{\bbox}{\hfill $\Box$}
\newcommand{\ignore}[1]{}
\newcommand{\ignorex}[1]{#1}
\newcommand{\wtilde}[1]{\widetilde{#1}}
\newcommand{\mq}[1]{\mbox{#1}\quad}
\newcommand{\bs}[1]{\boldsymbol{#1}}
\newcommand{\qmq}[1]{\quad\mbox{#1}\quad}
\newcommand{\qm}[1]{\quad\mbox{#1}}
\newcommand{\nn}{\nonumber}
\newcommand{\Bvert}{\left\vert\vphantom{\frac{1}{1}}\right.}
\newcommand{\To}{\rightarrow}
\newcommand{\supp}{\mbox{supp}}
\newcommand{\law}{{\cal L}}
\newcommand{\Z}{\mathbb{Z}}
%Erd\"{o}s-R\'{e}nyi

\newtheorem{theorem}{Theorem}[section]
\newtheorem{corollary}{Corollary}[section]
\newtheorem{conjecture}{Conjecture}[section]
\newtheorem{proposition}{Proposition}[section]
\newtheorem{lemma}{Lemma}[section]
\newtheorem{definition}{Definition}[section]
\newtheorem{example}{Example}[section]
\newtheorem{remark}{Remark}[section]
\newtheorem{case}{Case}[section]
\newtheorem{condition}{Condition}[section]
\newcommand{\pf}{\noindent {\it Proof:} }
\newcommand{\proof}{\noindent {\it Proof:} }
\frenchspacing

\title{\bf Some remarks on biased recursive trees}
\author{Ella Hiesmayr\footnote{Ko\c{c} University, Istanbul, Turkey. email: ehiesmayr17@ku.edu.tr} \hspace{0.2in} \"{U}m\.{i}t I\c{s}lak\footnote{Bo\u{g}azi\c{c}i University, Istanbul, Turkey. email: umit.islak1@boun.edu.tr}} \vspace{0.25in}

\maketitle

\begin{abstract}
The purpose of this paper is to analyze certain statistics of a recently introduced non-uniform random tree model, biased recursive trees. This model is based on constructing a random tree by establishing a correspondence with  non-uniform permutations, biased riffle shuffles. The statistics that are treated include the number of nodes with a given number of descendants, the depth of the tree, and the number of branches.  The model  yields the uniform recursive trees as a certain limit,  some new results for the uniform  case  are  obtained as well.

\bigskip

Keywords: Uniform recursive trees, biased recursive trees, random permutations, riffle shuffle permutations, random tree statistics

\bigskip

AMS Classification: 05C80, 60C05
\end{abstract}

\section{Introduction}\label{sec:intro}

A \emph{uniform recursive tree},  abbreviated by  URT, of order $n$ is a random tree that is chosen uniformly among all $(n-1)!$ increasing trees with  $n$ nodes. 
This is equivalent to the following recursive construction principle. A URT $\mathcal{T}_n$ of order $n$ is obtained from a URT $\mathcal{T}_{n-1}$ with  $n-1$ nodes  by joining node $n$ to any of the nodes $\{1, \dots, n-1\}$ with equal probability. If we start from scratch, first node $1$ is added as the root, and node $2$ is attached to node $1$. Then, node $3$ is either attached to node $1$ or to node $2$  with equal probability $\frac{1}{2}$. In general node $i$ attaches to any of the nodes $\{1, \dots, i-1\}$ with probability $\frac{1}{i-1}$ \cite{Altokislak}.

Another  way to construct a URT that is useful in understanding certain tree statistics is based on a bijection between recursive trees and permutations. For example, in the uniformly random case,  one  generates a uniformly random
permutation (URP) on $\{2,\ldots,n\}$ and a  URT on vertices $\{1,\ldots,n\}$
incrementally at the same time,  allowing one to have enough independence to handle random tree problems via well known results on functions of independent random variables. The construction will be  explained in detail in the following section. 

There is a vast literature on URTs. We refer to \cite{Mahmoud} for a survey on some classical results on several statistics of URTs including the height, extremal degrees and internodal distances. Applications of URTs are also rich, see for example,  \cite{Feng05},    \cite{Gastwirth},  \cite{Fire}, \cite{NajockHeyde82} 
for different approaches in modeling  real life problems. Despite the applications we have for URTs, it lacks significant properties: the chance to have a different global recursive construction principle and the chance to have nodes with distinct behaviors. For this reason, various non-uniform recursive tree models have been introduced in the last two decades. 

Variations of recursive trees in non-uniform setting include Hoppe trees \cite{Hoppe}, weighted recursive trees \cite{HI:2017}, the binary recursive trees  \cite{Flajolet}, plane-oriented recursive trees \cite{Szymanski87} and scaled attachment random recursive trees \cite{Devroye11}.    Our interest here is in yet another   model which was recently introduced in \cite{Altokislak}. Here, the construction of the random tree  is based on a completely different approach using  interplay between the symmetric group and recursive trees. In comparison to the  URT case, we use  a biased riffle shuffle distribution instead of a URP distribution on the symmetric group and then consider the  corresponding trees obtained from these permutations.This allows us  to derive results on the number of branches, the number of nodes with at least $k$ descendants, the number of nodes with exactly $k$ descendants and the depth of nodes in this so called biased recursive tree (BRT) model. Also, as to be seen below, URTs turn out to be a certain limiting case of BRTs , and we exploit this to analyze the number of nodes with exactly $k$, and with at least $k$ descendants in URTs.

The rest of the paper is organized as follows. In Section \ref{sec:connectionstoperms}, we review riffle shuffles and  discuss the construction of biased recursive trees via riffle shuffles. Later in Section \ref{sec:branches}  we  present our results on the number of branches. This is followed by an analysis of the  number of nodes with a given number of descendants in BRTs and URTs in Sections  \ref{sec:descendants} and \ref{sec:URT}, respectively. The last statistic of interest, the depth of node $n$ is investigated in Section \ref{sec:depth} and then the paper 
is concluded   with a brief discussion in Section \ref{sec:conclusion}.  
To avoid cumbersome notation we will take $ a < \{b_1, \dots, b_k \}$ to mean that $a< b_i$ for all $1 \leq i \leq k$ throughout the paper.

\section{Connections to random permutations}\label{sec:connectionstoperms}

\subsection{URT constructions based on permutations}

We begin by reviewing constructions of URTs via uniformly random permutations. The discussion here closely follows \cite{Altokislak}. In order to construct  the URT, we first construct a uniformly random permutation  $\pi$ of $\{2,3,\dots, n\}$.
Given this $\pi$, we construct a URT as follows: First, $2$ is attached to $1$, then $3$ is connected to $1$ if it is to the left of $2$ in the permutation, otherwise to $2$. In general node $i$ is attached to the rightmost node to the left of $i$ that is less than $i$. If there is no smaller number than $i$ to its left, $i$ is attached to 1. Thus $i= \pi(s)$ attaches to node $j=\pi(r)$ if $r = \max \{t \in \{1, \dots, s-1\}: \pi(t) < \pi(s) \}$, where we set $\pi(1) = 1$. We will call this way of constructing a URT the \emph{construction from a permutation}. Figure \ref{fig:StepByStep} shows an example of the step by step construction of a recursive tree corresponding to a permutation. 

\begin{figure}
  \tikzstyle{every node}=[circle, draw]
\begin{tikzpicture}[>=stealth',auto,node distance=1.6cm,
  thick, font=\sffamily, main node/.style={circle,draw,font=\sffamily\bfseries}]
  
  \node[draw=none,rectangle]  (p) {\underline{1}6387254};
  
  \node[main node] (1) [below=3mm  of p] {1};

\begin{scope}[xshift=2cm]
         \node[draw=none,rectangle]  (p)  {\textbf{1}6387\underline{2}54};
  \node[main node] (1) [below=3mm of p] {1};
  \node[main node] (2)  [below of=1] {2};
  
\path[every node/.style={font=\sffamily\small}]
    (1) edge node {} (2);

\end{scope}

\begin{scope}[xshift=4.5cm]     

          \node[draw=none,rectangle]  (p) {\textbf{1}6\underline{3}87254};
  \node[main node] (1) [below= 3mm of p] {1};
  \node[main node] (2)  [below left of=1] {2};
  \node[main node] (3) [below right of =1] {3};
  
\path[every node/.style={font=\sffamily\small}]
    (1) edge node {} (2)
    (1) edge node {} (3);

\end{scope}

\begin{scope}[xshift = 8cm]
 \node[draw=none,rectangle]  (p) {16378\textbf{2}5\underline{4}};
   \node[main node] (1) [below=3mm of p] {1};
  \node[main node] (2)  [below left of=1] {2};
  \node[main node] (3) [below right of =1] {3};
  \node[main node] (4) [below of =2] {4};
  
\path[every node/.style={font=\sffamily\small}]
    (1) edge node {} (2)
    (1) edge node {} (3)
    (2) edge node {} (4);
    \end{scope}
    
    \begin{scope}[xshift = 12.5cm]
    \node[draw=none,rectangle]  (p)  {16378\textbf{2}\underline{5}4};
   \node[main node] (1) [below=3mm of p] {1};
  \node[main node] (2)  [below left of=1] {2};
  \node[main node] (3) [below right of =1] {3};
  \node[main node] (4) [below left of =2] {4};
  \node[main node] (5) [below right of=2] {5};
  
\path[every node/.style={font=\sffamily\small}]
    (1) edge node {} (2)
    (1) edge node {} (3)
    (2) edge node {} (4)
    (2) edge node {} (5);
    \end{scope}
    
        \begin{scope}[xshift = 2cm, yshift = -5cm]
          \node[draw=none,rectangle]  (p)  {\textbf{1}\underline{6}387254};
   \node[main node] (1) [below=3mm of p] {1};
  \node[main node] (2)  [below left of=1] {2};
  \node[main node] (3) [below of =1] {3};
  \node[main node] (4) [below left of =2] {4};
  \node[main node] (5) [below of=2] {5};
  \node[main node] (6) [below right of =1] {6};
  
\path[every node/.style={font=\sffamily\small}]
    (1) edge node {} (2)
    (1) edge node {} (3)
    (2) edge node {} (4)
    (2) edge node {} (5)
    (1) edge node {} (6);
    \end{scope}
    
            \begin{scope}[xshift = 7cm, yshift = -5cm]
             \node[draw=none,rectangle]  (p)  {16\textbf{3}8\underline{7}254};
   \node[main node] (1) [below= 3mm of p] {1};
  \node[main node] (2)  [below left of=1] {2};
  \node[main node] (3) [below of =1] {3};
  \node[main node] (4) [below left of =2] {4};
  \node[main node] (5) [below of=2] {5};
  \node[main node] (6) [below right of =1] {6};
  \node[main node] (7) [below of =3] {7};
  
\path[every node/.style={font=\sffamily\small}]
    (1) edge node {} (2)
    (1) edge node {} (3)
    (2) edge node {} (4)
    (2) edge node {} (5)
    (1) edge node {} (6)
    (3) edge node {} (7);
    \end{scope}
    
\begin{scope}[xshift = 12cm, yshift = -5cm]
  \node[draw=none,rectangle]  (p)  {16\textbf{3}\underline{8}7254};
   \node[main node] (1) [below=3mm of p] {1};
  \node[main node] (2)  [below left of=1] {2};
  \node[main node] (3) [below of =1] {3};
  \node[main node] (4) [below left of =2] {4};
  \node[main node] (5) [below of=2] {5};
  \node[main node] (6) [below right of =1] {6};
  \node[main node] (7) [below of =3] {7};
  \node[main node] (8) [below right of =3] {8};
  
\path[every node/.style={font=\sffamily\small}]
    (1) edge node {} (2)
    (1) edge node {} (3)
    (2) edge node {} (4)
    (2) edge node {} (5)
    (1) edge node {} (6)
    (3) edge node {} (7)
    (3) edge node {} (8);
    \end{scope}

\end{tikzpicture}
\caption{Step by step construction of the recursive tree correponding to 16387254. The newly attached node is underlined and its parent bold.}   \label{fig:StepByStep}
\end{figure}
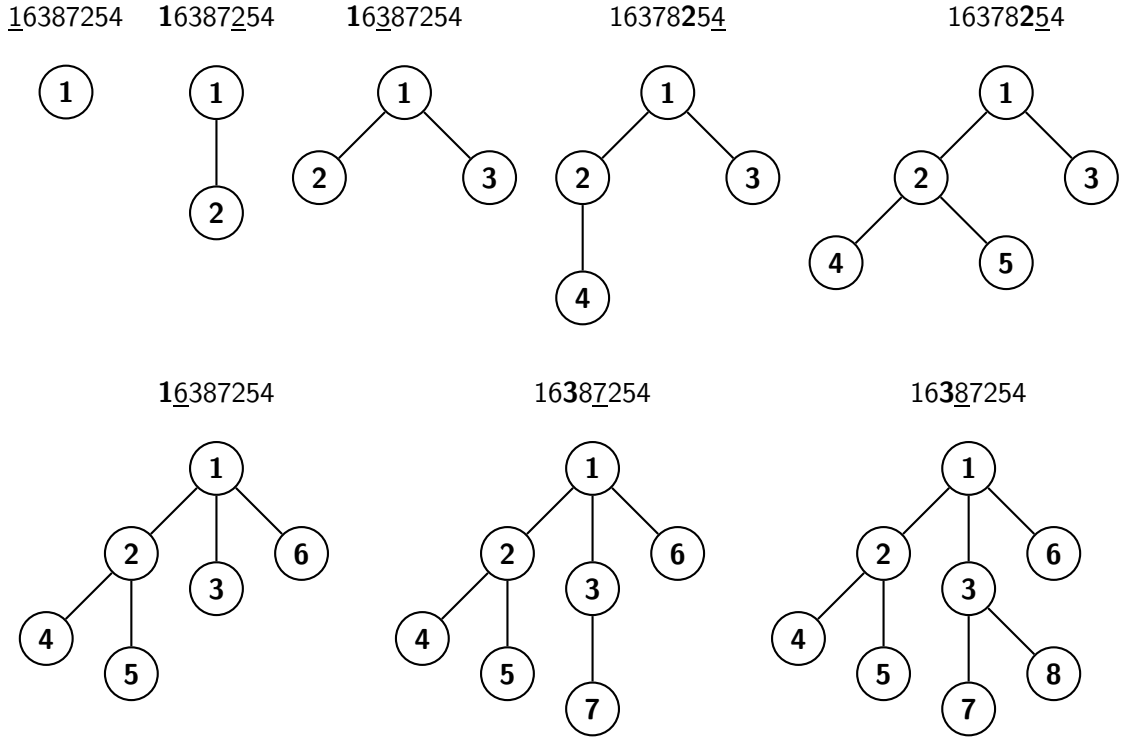

Similarly, given a URT $\mathcal{T}_n$, we can construct the corresponding permutation by writing $1$ to the very left, $2$ on its right and then step by step every node $i$ directly on the right of the node it is attached to. That this relation gives a bijection between URTs and uniform random permutations can be seen by the symmetry of the recursive constructions. The tree and the permutation corresponding to it can also be constructed simultaneously, see \cite{Altokislak}.

\subsection{Riffle shuffles and BRTs}

First we note that the construction of a URT via permutations described in the previous section can be formalized as:  

\bigskip

Let $\pi=\pi_2 \ldots \pi_{n}$ be a  permutation on
$\{2,\ldots,n\}$, then we can construct a recursive tree with vertices
$1,2,\ldots,n$ by taking the vertex $1$ as the root, and attaching the
node $i \geq 2$ to the rightmost element $j$ of $\pi$, which
precedes $i$ and is less than $i$. If there is no such element $j$,
define the root 1 to be the parent of $i$. 
If we start with a uniform random permutation, then the resulting tree is a URT.
\bigskip

Our treatment below will be based on this interpretation. In particular we will replace the uniformly random permutation with a random permutation that is sampled from a general riffle shuffle distribution. 
Details of the following brief review of riffle shuffles can be found in \cite{Diac01} and \cite{Fulman}.

\begin{definition}  Cut the $n \geq 2$ card deck into $a$ piles by picking pile sizes
  according to the $mult(a;p = (p_1,\ldots,p_a))$ distribution.
That is, choose $b_1,\ldots,b_a$ with probability
$\binom{n}{b_1,\ldots,b_a} \Pi_{i=1}^a p_i^{b_i}.$ Then choose
uniformly one of  the $\binom{n}{b_1,\ldots,b_a}$ ways of interleaving
the packets, leaving the cards in each pile in their original order.
The resulting probability distribution on $S_n$ is called the
\emph{$p$-biased riffle shuffle distribution} and is denoted by
$P_{n,p}$. When $p$ is the uniform distribution over $[a]:\{1,2,\ldots,a\}$ for some $a \in \mathbb{N}$, we write  $P_{n,a}$ to denote the resulting distribution and call
the resulting shuffle  an \emph{$a$-shuffle}.
\end{definition}

The following equivalence will turn out to be crucial for our
arguments.
\begin{proposition} (Inverse shuffles) The inverse of a $p$-biased shuffle with $a$ piles has
the following description. Assign independent random digits from
  $[a]$ to each card with distribution $p=(p_1,\ldots,p_a)$. Then sort according to digit,
  preserving relative order for cards with the same digit.
\end{proposition}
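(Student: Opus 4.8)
The plan is to verify the identity in law by computing, for each fixed $\sigma\in S_n$, both the probability that $\Pi^{-1}=\sigma$ when $\Pi\sim P_{n,p}$ and the probability that the ``assign digits, then stably sort'' recipe outputs $\sigma$, and checking that they coincide; this is the standard Bayer--Diaconis-style bookkeeping carried out with a general bias $p$ (cf.\ \cite{Diac01,Fulman}). Throughout I fix the convention that a shuffle produces the permutation $\pi$ with $\pi(i)$ the final position of the card that started in position $i$, so that $\mathrm{Des}(\pi)=\{i\in\{1,\dots,n-1\}:\pi(i)>\pi(i+1)\}$, and I note that $\Pro(\Pi^{-1}=\sigma)=\Pro(\Pi=\sigma^{-1})=P_{n,p}(\sigma^{-1})$.

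First I would obtain a closed form for $P_{n,p}(\pi)$. Fix a target $\pi$ and a vector of pile sizes $(b_1,\dots,b_a)$ with $b_i\ge 0$ and $\sum_{i=1}^a b_i=n$, and set $B_j=b_1+\cdots+b_j$; the cut sends the cards $\{B_{j-1}+1,\dots,B_j\}$ to pile $j$ in that (increasing) order. Since an interleaving never reorders the cards within a pile, a cut with these sizes can produce $\pi$ if and only if $\pi$ is increasing on each block $\{B_{j-1}+1,\dots,B_j\}$, equivalently $\mathrm{Des}(\pi)\subseteq\{B_1,\dots,B_{a-1}\}$; and when this holds exactly one interleaving does it, contributing conditional probability $\binom{n}{b_1,\dots,b_a}^{-1}$, which cancels the multinomial weight of the cut. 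Summing over cuts gives
\[
P_{n,p}(\pi)\;=\;\sum_{\substack{(b_1,\dots,b_a):\ \sum_i b_i=n\\ \mathrm{Des}(\pi)\subseteq\{B_1,\dots,B_{a-1}\}}}\ \prod_{i=1}^{a}p_i^{\,b_i}.
\]

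Next I would evaluate the recipe. Assign i.i.d.\ digits $d_1,\dots,d_n\sim p$ to the cards and list the cards in increasing order of the pairs $(d_i,i)$ (digit first, index to break ties); this outputs the permutation $\tau$ with $\tau(i)$ the rank of $(d_i,i)$ in that list. Then $\tau=\sigma$ precisely when the pairs $(d_{\sigma^{-1}(\ell)},\sigma^{-1}(\ell))$, read along $\ell=1,\dots,n$, are strictly lexicographically increasing; writing $c_\ell:=d_{\sigma^{-1}(\ell)}$ and unwinding the comparison, this says $c_1\le c_2\le\cdots\le c_n$ with a strict jump $c_\ell<c_{\ell+1}$ at every $\ell\in\mathrm{Des}(\sigma^{-1})$. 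As $\ell\mapsto\sigma^{-1}(\ell)$ is a bijection, $(c_1,\dots,c_n)$ is again i.i.d.\ $\sim p$, so $\Pro(\tau=\sigma)$ equals the total weight $\sum_{(c_\ell)}\prod_{\ell=1}^n p_{c_\ell}$ of all such words. Finally, the bijection ``weakly increasing word $(c_1,\dots,c_n)\in[a]^n\ \leftrightarrow\ $ composition $b_j=\#\{\ell:c_\ell=j\}$ of $n$'' is weight preserving, $\prod_\ell p_{c_\ell}=\prod_j p_j^{b_j}$, and turns ``strict jump at $\ell$'' into ``$\ell$ equals the partial sum $B_{c_\ell}$'', so the two index sets match term by term and $\Pro(\tau=\sigma)=\sum_{b:\ \mathrm{Des}(\sigma^{-1})\subseteq\{B_1,\dots,B_{a-1}\}}\prod_j p_j^{b_j}=P_{n,p}(\sigma^{-1})=\Pro(\Pi^{-1}=\sigma)$. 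Since this holds for every $\sigma$, $\tau$ has the law of $\Pi^{-1}$, which is the claim.

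The step demanding the most care is the first one: checking that every admissible cut feeds into $\pi$ through exactly one interleaving, so that the multinomial coefficients really do cancel, and pinning down the descent-set reformulation, including the degenerate cases where some $b_j=0$ or $\mathrm{Des}(\sigma^{-1})=\varnothing$. Everything after that is a weight-preserving reindexing; the only thing to watch there is to keep the inverse straight — the recipe realizes the \emph{inverse} shuffle, which is why $\sigma^{-1}$ rather than $\sigma$ enters the descent condition. (An alternative to the second half is to build a measure-preserving bijection between the randomness of the forward shuffle — the cut together with the interleaving, equivalently an i.i.d.\ word in $[a]^n$ — and the digit assignment, under which one output permutation is the inverse of the other; but this needs the same combinatorial identification, so I would use the computation above.)
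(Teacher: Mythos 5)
Your argument is correct, but note that the paper does not actually prove this proposition: it is quoted as a known fact, with the reader referred to \cite{Diac01} and \cite{Fulman} for details. What you have written is essentially the standard Bayer--Diaconis/Fulman verification, carried out in full: both laws are reduced to the same sum $\sum_{b}\prod_j p_j^{b_j}$ over compositions $b$ of $n$ whose partial sums contain the relevant descent set, on one side by observing that each admissible cut admits exactly one interleaving producing a given $\pi$ (so the multinomial weights cancel), and on the other by identifying the digit words compatible with a given output $\sigma$ of the stable sort as the weakly increasing words with forced strict rises at $\mathrm{Des}(\sigma^{-1})$. The two places that genuinely require care — that the multinomial cancellation survives empty piles, and that the descent condition lands on $\sigma^{-1}$ rather than $\sigma$ because the sorting recipe realizes the \emph{inverse} shuffle under your chosen convention for which permutation the GSR model outputs — are both handled correctly. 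A marginally slicker alternative (the one hinted at in your closing parenthesis, and closer in spirit to how \cite{Fulman} phrases it) is to note that the forward shuffle's randomness is itself equivalent to an i.i.d.\ word in $[a]^n$ read off the final deck, and to exhibit a measure-preserving involution between the two samplings; but that route needs exactly the same combinatorial identification you carry out, so nothing is lost by the direct computation.
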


Using $p$-biased  shuffles we next introduce $p$-biased recursive trees.
Let $\gamma^p=(\gamma_1,\ldots,\gamma_{n-1})$ have the $p$-biased
shuffle distribution over  $\{2,\ldots,n\}$. Then construct a recursive
tree with vertices $1,2,\ldots,n$ by taking the vertex $1$ as the root,
and attaching the node $i \geq 2$ to the rightmost element $j$ of
$\gamma^p$, which precedes $i$ and is less than $i$. If there is no
such an element $j$, then we define the root 1 to be the parent of
$i$. Call the resulting random tree $\mathcal{T}_n'$.
\begin{definition}
The random tree $\mathcal{T}_n'$ described in  previous paragraph
will be called a $p$\emph{-biased recursive tree ($p$-BRT)}. When
$p$ is the uniform distribution over $[a]$, $a\geq 2$, we call
$\mathcal{T}_n'$ an $a$\emph{-recursive tree} ($a$-RT).
\end{definition}

The following figure shows all 2-RTs one may obtain on $4$ vertices.

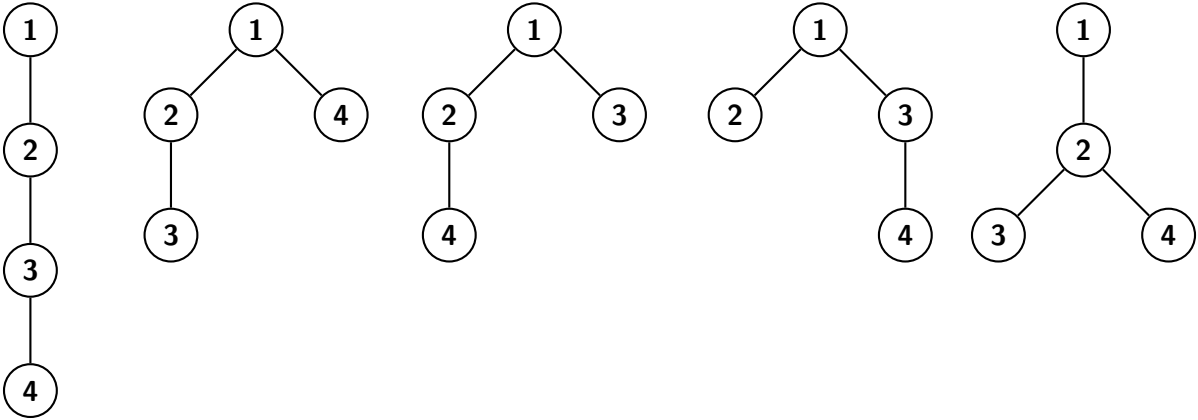
\begin{figure}[H]
\begin{center}
\begin{tikzpicture}[>=stealth',auto,node distance=1.6cm,
  thick,main node/.style={circle,draw,font=\sffamily\bfseries}]

%\centering
  \node[main node] (1) {1};
  \node[main node] (2) [below of=1] {2};
  \node[main node] (3) [below of=2] {3};
  \node[main node] (4) [below of=3] {4};

    \path[every node/.style={font=\sffamily\small}]
    (1) edge node {} (2)
    (2) edge node {} (3)
    (3) edge node {} (4);

    \node at (0,-7) {};

   \begin{scope}[xshift=3cm]

   \node[main node] (1) {1};
  \node[main node] (2) [below left of=1] {2};
  \node[main node] (3) [below right of=1] {4};
  \node[main node] (4) [below of=2] {3};

\path[every node/.style={font=\sffamily\small}]
    (1) edge node {} (2)
    (1) edge node {} (3)
    (2) edge node {} (4);

    \node at (0,-7) {};

    \end{scope}

   \begin{scope}[xshift=6.7cm]

   \node[main node] (1) {1};
  \node[main node] (2) [below left of=1] {2};
  \node[main node] (3) [below right of=1] {3};
  \node[main node] (4) [below of=2] {4};

\path[every node/.style={font=\sffamily\small}]
    (1) edge node {} (2)
    (1) edge node {} (3)
    (2) edge node {} (4);

    \node at (0,-7) {};

    \end{scope}

    \begin{scope}[xshift=10.5cm]

    \node[main node] (1) {1};
  \node[main node] (2) [below left of=1] {2};
  \node[main node] (3) [below right of=1] {3};
  \node[main node] (4) [below of=3] {4};

\path[every node/.style={font=\sffamily\small}]
    (1) edge node {} (2)
    (1) edge node {} (3)
    (3) edge node {} (4);

    \node at (0,-7) {};

    \end{scope}

        \begin{scope}[xshift=14cm]

      \node[main node] (1) {1};
  \node[main node] (2) [below of=1] {2};
  \node[main node] (3) [below left of=2] {3};
  \node[main node] (4) [below right of=2] {4};

\path[every node/.style={font=\sffamily\small}]
    (1) edge node {} (2)
    (2) edge node {} (3)
    (2) edge node {} (4);

    \end{scope}

   \end{tikzpicture}
\end{center}
\vspace{-1in} \caption{$P_{n,2}$ assigns probability $1/2$ to the leftmost tree, and  $1/8$ to all others.}
\end{figure}

Before moving to our results, let us note some reasons for choosing the biased riffle shuffle distribution among all possible non-uniform distributions on the symmetric group. First of all the equivalent description of riffle shuffles via inverse riffle shuffles   makes the model more tractable thanks to underlying independence.  Secondly, biased riffle shuffle distributions vary a lot depending on the chosen parameters. Thus the properties of the corresponding trees will also be close or far from    the uniform case accordingly.  As an example, the number of branches can be bounded in BRTs if appropriate parameters are chosen in contrast to URTs. This may  allow one to model more diverse recursive phenomena with more precision.  Lastly, riffle shuffle permutations are themselves interesting from both a theoretical and applicational perspective, and have connections to various fields  \cite{Diac01}. 

We may now move on to studying the BRT statistics beginning with the number of branches. In the following we will always assume that $p= (p_1, \dots, p_a)$ is non-degenerate i.e. that every $p_i>0$.  This is not a restriction on the number of different p-BRT models we can consider since simply deleting the zero entry in $p$ will give us the same distribution on the permutations, with $p$ of length $a-1$.

\section{Number of branches}\label{sec:branches}

The \emph{number of branches} in a tree  is the degree of the root. In the analysis of URTs there are several options one may use  to obtain results on the number of branches. See, for example,  \cite{Feng05} for one elementary approach and \cite{ellatez} for a relevant discussion. We will use an approach based on anti-records.

A record is an element that is greater than all previous ones and an anti-record an element that is smaller than all previous ones. More precisely: a permutation $\pi$ has a \emph{record} in $i$ if $\pi(i) > \max \{\pi(1), \dots, \pi(i-1)\}$ and an \emph{anti-record} in $i$ if $\pi(i) < \min \{\pi(1), \dots, \pi(i-1)\}$ \cite{Stanley}. %[p.30]
Every permutation has a record and an anti-record in $\pi(1)$. We can similarly define records for sequences of continuous random variables. Nevzorov summarizes many results about records of sequences of random variables in \cite{Nevzorov01}. These can be extended to uniform random permutations by the standard construction of a uniform random permutation from independent uniformly random variables, for a description see \cite{Altokislak}.

To get results on the number of branches in a BRT we will use the observation that the number of branches of a recursive tree is equal to the number of anti-records in its permutation representation, where we consider the permutation to start with $\pi(2)$. Let us make the connection between the number of anti-records in a permutation $\pi$ and the number of branches in the corresponding tree clear. We can observe that $\pi(2)$ attaches to 1 and is thus the first node of a branch of $\mathcal{T}_n$. If $\pi(3) > \pi(2)$ it will attach to $\pi(2)$, if $\pi(4) > \pi(2)$ it will attach to $\pi(2)$ or $\pi(3)$ and so on. As long as no $\pi(r) < \pi(2)$ appears, all nodes will be part of the branch starting with node $\pi(2)$. Let ${r= \min \{ r = 3, \dots, n : \pi(r) < \pi(2) \}} $ then $\pi(r)$ is the second node that attaches to 1 and is thus the start of the next branch. Similarly, as long as no smaller number comes up, all subsequent nodes will be part of this second branch. In general, $\pi(r)$ attaches to 1 if and only if ${\pi(r) < \min \{\pi(2), \dots, \pi(r-1)\}}$ which means that $\pi(r)$ must be an anti-record. Thus, we can also write the number of branches $\mathcal{B}_n$ of $\mathcal{T}_n$ as another sum of indicators. 

\begin{proposition}
Let  $\mathcal{B}_{n}^{p}$ be the number of branches in  a $p$-BRT  and $\pi \in S_n$ be a URP. Then 
$$\mathcal{B}_n(\mathcal{T}_n^p) =_d  \#(\text{anti-records in} \; \pi) =\sum_{r=2}^{n} \1({\pi(r) = \min \{\pi(1), \dots, \pi(r)\}}),$$ where $=_d$ is used for equality in distribution.

\end{proposition}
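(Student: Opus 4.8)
The plan is to treat the displayed chain as a sample‑path identity followed by the definition of an anti‑record: I would first establish that, for every realisation, the number of branches of $\mathcal{T}_n^p$ coincides with the number of anti‑records of the permutation $\gamma^p$ used to construct it, and then observe that counting anti‑records of a word is exactly the indicator sum written on the right. The phrasing as an equality in distribution is then explained by the fact that the tree does not single out a unique generating permutation, while all generating permutations share the same anti‑record count.

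For the pointwise identity I would argue as in the discussion preceding the statement. The number of branches is the degree of the root $1$, hence the number of $i\in\{2,\dots,n\}$ whose parent is $1$. By the construction rule the parent of $i$ is the rightmost entry of $\gamma^p$ that precedes $i$ and is smaller than $i$, and equals $1$ exactly when there is no such entry; equivalently, the parent of $i$ is $1$ iff every entry occurring before $i$ in $\gamma^p$ exceeds $i$, i.e.\ iff $i$ occupies an anti‑record position of $\gamma^p$. Summing the matching indicators over $i$ gives $\mathcal{B}_n(\mathcal{T}_n^p)=\#(\text{anti-records of }\gamma^p)$ on every sample path, and in particular the leftmost entry of $\gamma^p$ is always an anti‑record, matching $\mathcal{B}_n(\mathcal{T}_n^p)\ge 1$.

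To finish, a position $r$ is by definition an anti‑record of a word $w$ precisely when $w(r)=\min\{w(1),\dots,w(r)\}$, so $\#(\text{anti-records of }w)=\sum_r\1(w(r)=\min\{w(1),\dots,w(r)\})$; applied to $w=\gamma^p$, with the indexing shifted so the word starts at the position playing the role of $\pi(2)$, this is the stated formula. The combinatorial step is routine; the only places I would be careful are the bookkeeping (whether the count includes the first position, and that the anti‑record count is constant on each fibre of the permutation‑to‑tree map) and, if the intended $\pi$ is a genuine uniform permutation, the fact that the anti‑record count of $\gamma^p$ has the same law as that of a uniform permutation. I would regard that last point as the real content if it is asserted, and would try to prove it from the inverse‑shuffle description — assign i.i.d.\ digits, stable‑sort — using that the anti‑record count is invariant under inversion, thereby reducing the question to left‑to‑right minima of the digit string.
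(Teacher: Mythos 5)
Your core argument is correct and is essentially the paper's own: the proposition is justified there only by the paragraph preceding it, which establishes exactly the pointwise identity you describe (node $i$ attaches to the root iff it sits at an anti-record position of the generating permutation), followed by the definitional rewriting of the anti-record count as an indicator sum. One caution about your final paragraph: the ``URP'' in the statement has to be read as the permutation representation $\gamma^p$ of $\mathcal{T}_n^p$ (a $p$-biased riffle shuffle) rather than a genuinely uniform permutation, so you were right to be suspicious --- the distributional identity you contemplate proving in the uniform reading is actually false, since $\mathcal{B}_n^p \le a$ almost surely while the anti-record count of a uniform permutation is unbounded, and the expectation computed in the following theorem differs from $H_{n-1}$ for finite $a$. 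Your proposed reduction to left-to-right minima of the i.i.d.\ digit string is nonetheless exactly the device the paper uses immediately afterwards to compute moments; it just does not, and cannot, deliver equality in law with the uniform case.
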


Now, using anti-records our goal is to find explicit values for  the first two moments of the number of branches in a BRT.  We begin with  the expectation.

\begin{theorem}
Let $\mathcal{B}_{n}^{p}$ denote the number of branches in a $p$-BRT $\mathcal{T}_{n}^{p}$. Then 
\begin{equation*} \label{thm:BranchesExpBRT}
\begin{split}
\E[\mathcal{B}_{n}^{p}] = \sum_{s=1}^{a-1} \frac{p_s}{\sum_{\ell=1}^{s} p_{\ell}} \left ( 1- \left (\sum_{\ell=s+1}^{a} p_{\ell} \right)^{n-1} \right ) + p_a.
\end{split}
\end{equation*}
Moreover,  
\begin{equation*}
\E[\mathcal{B}_{n}^{p}]  \xrightarrow{n \to \infty} \sum_{s=1}^{a} \frac{p_s}{\sum_{\ell=1}^{s} p_{\ell}}.
\end{equation*}
When $p$ is the uniform distribution over $[a]$, letting $\mathcal{B}_n^{a}$ be the number of branches in an $a$-RT $\mathcal{T}_n^{a}$,  
\begin{equation*}
\E[\mathcal{B}_{n}^{a}]= \sum_{s=1}^{a-1} \frac{1}{s} \left ( 1- \left ( 1- \frac{s}{a}\right )^{n-1} \right )+ \frac{1}{a}.
\end{equation*}
Asymptotically we have for fixed $a$
\begin{equation*}
\E[\mathcal{B}_{n}^{a}] \xrightarrow{n \to \infty} H_a,
\end{equation*}
and if we fix $n$ and let  $a$ tend  to infinity, then the expectation tends to the expectation for the URT case, namely
\begin{equation*}
\E[\mathcal{B}_{n}^{a}] \xrightarrow{a \to \infty} H_{n-1},
\end{equation*}
where $H_n$ is the $n^{th}$ harmonic number of the first order, i.e. $H_n = \sum_{i=1}^{n} \frac{1}{n}$.
\end{theorem}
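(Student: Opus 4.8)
The plan is to reduce the whole statement to the two propositions above. By the anti-record proposition, $\mathcal{B}_n^p$ has the same law as the number of anti-records of its permutation representation, which for a $p$-BRT on $n$ vertices is the $p$-biased riffle shuffle $\gamma^p$ of the $(n-1)$-element set $\{2,\dots,n\}$; after relabelling we may take $\gamma^p$ to be a $p$-biased shuffle of $[m]$ with $m=n-1$, so that $\E[\mathcal{B}_n^p]=\sum_{r=1}^{m}\Pro(\text{position }r\text{ is an anti-record of }\gamma^p)$, the $r=1$ term being $1$.

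The one real idea is to evaluate these probabilities through the inverse-shuffle description. By the inverse-shuffle proposition, $\gamma^p$ is distributed as $S^{-1}$, where $S$ is the arrangement obtained by attaching i.i.d. digits $D_1,\dots,D_m\in[a]$, each with law $p$, to the cards $1,\dots,m$ and then listing the cards sorted by the pair $(D_i,i)$ lexicographically. Since the number of anti-records of a permutation is unchanged under inversion it would suffice to count the left-to-right minima of $S$, but more directly one checks that position $r$ is an anti-record of $\gamma^p=S^{-1}$ exactly when card $r$ precedes all of $1,\dots,r-1$ in $S$, i.e. when $D_r<D_j$ for every $j<r$. Conditioning on the value of $D_r$, this event has probability $\sum_{s=1}^{a-1}p_s\bigl(\sum_{\ell=s+1}^{a}p_\ell\bigr)^{r-1}$ when $r\ge 2$ (the $s=a$ contribution vanishing), and probability $1$ when $r=1$.

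Summing over $r$ gives $\E[\mathcal{B}_n^p]=1+\sum_{s=1}^{a-1}p_s\sum_{r=2}^{n-1}\bigl(\sum_{\ell>s}p_\ell\bigr)^{r-1}$, and since $\sum_{\ell>s}p_\ell<1$ by non-degeneracy the inner sum is a finite geometric series. Evaluating it and then tidying up with the identities $\sum_{\ell\le s}p_\ell=1-\sum_{\ell>s}p_\ell$, $\sum_{s=1}^{a-1}p_s=1-p_a$ and $p_a/\sum_{\ell\le a}p_\ell=p_a$ converts the expression into the claimed closed form $\sum_{s=1}^{a-1}\frac{p_s}{\sum_{\ell\le s}p_\ell}\bigl(1-(\sum_{\ell>s}p_\ell)^{n-1}\bigr)+p_a$. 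The first limit is then immediate: for each $s\le a-1$ one has $0<\sum_{\ell>s}p_\ell<1$, so the $n$-dependent powers tend to $0$, and the surviving sum is $\sum_{s=1}^{a}p_s/\sum_{\ell\le s}p_\ell$ (its $s=a$ term being $p_a$). Putting $p_s=1/a$, so that $\sum_{\ell\le s}p_\ell=s/a$ and $\sum_{\ell>s}p_\ell=1-s/a$, turns this into the stated $a$-RT formula, whose $n\to\infty$ limit is $\sum_{s=1}^{a-1}\frac1s+\frac1a=H_a$.

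For the remaining limit, $a\to\infty$ with $n$ fixed, I would not use the closed form but go back to the sum-of-probabilities expression: with $p$ uniform, the anti-record probability at a fixed position $r\ge 2$ is $a^{-r}\sum_{t=1}^{a-1}t^{r-1}$, which tends to $1/r$ because $\sum_{t=1}^{a-1}t^{r-1}=\frac{a^r}{r}+O(a^{r-1})$, while position $1$ always contributes $1=1/1$; as the outer sum has only the $n-1$ terms $r=1,\dots,n-1$, we get $\E[\mathcal{B}_n^a]\to\sum_{r=1}^{n-1}1/r=H_{n-1}$. (Equivalently, the event that $D_1,\dots,D_{n-1}$ are pairwise distinct has probability tending to $1$, and on that event their induced relative order is a uniform random permutation of $[n-1]$, whose expected number of anti-records is $H_{n-1}$.) I do not expect any serious obstacle beyond the translation step in the second paragraph; the finite geometric summation and, in particular, the algebraic rearrangement that produces the additive $p_a$ term are the places most likely to need care.
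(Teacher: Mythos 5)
Your proposal is correct and follows essentially the same route as the paper: the inverse-shuffle description with i.i.d.\ digits, the characterization of an anti-record at position $r$ as $D_r<\min_{j<r}D_j$, conditioning on the digit value, and summing the resulting geometric series. The only (cosmetic) difference is in the final $a\to\infty$ limit, where you pass to the limit term-by-term in the per-position probabilities rather than manipulating the closed form as the paper does; both rest on the same identity $\sum_{t=1}^{a}t^{k}=\tfrac{a^{k+1}}{k+1}+\mathcal{O}(a^{k})$.
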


\begin{proof}
We will use the inverse formulation for biased riffle shuffles in order to study the number of anti-records. As we consider riffle shuffles of $\{2, \dots, n\}$, there definitely is an anti-record at $\gamma(2)$. For $i>2$ we get an anti-record at $i$ if and only if $X_i < \min\{X_2, \dots, X_{i-1}\}$, since only in that case we get at $i$ the first card from a pile containing cards that are smaller than all the previous ones.
Since the $X_i$'s are i.i.d. and $\Pro(X_j = s)= p_s$ for $s \in [a]$, we get for $3\leq i < n$,
\begin{eqnarray*}
 \Pro (X_i < \min\{X_2, \dots, X_{i-1}\})  &=& \sum_{s=1}^{a} \Pro(X_i < \min\{X_2, \dots, X_{i-1}\} | X_i = s) \Pro(X_i=s)   \\
&=& \sum_{s=1}^{a-1} \left ( \sum_{\ell=s+1}^{a} p_{\ell}\right )^{i-2} p_s.
\end{eqnarray*}
We sum from 1 to $a-1$ because for $i\geq3$, $\min\{X_2, \dots, X_{i-1}\} \leq a$ and thus $\Pro(a < \min\{X_2, \dots, X_{i-1}\}) = 0$.
Let us denote $A_i =  \1(\gamma(i) \text{ is an anti-record})$, then $\mathcal{B}_n^p = \sum_{i=2}^{n} A_i$ and $A_2=1$. Hence,
\begin{eqnarray*}
\E[\mathcal{B}_n^p]
= 1+  \sum_{i=3}^n  \Pro (X_i < \min\{X_2, \dots, X_{i-1}\}) &=& 1 + \sum_{i=3}^n \sum_{s=1}^{a-1} \left ( \sum_{\ell=s+1}^{a} p_{\ell}\right )^{i-2} p_s \\
&=& \sum_{s=1}^{a-1} \frac{p_s}{\sum_{\ell=1}^{s} p_{\ell}} \left ( 1- \left (\sum_{\ell=s+1}^{a} p_{\ell} \right)^{n-1} \right ) + p_a.
\end{eqnarray*}
Since  $p_1>0$ implies  that $\left (\sum_{\ell=s+1}^{a} p_{\ell} \right)^{n-1} \xrightarrow{n \to \infty} 0$ for all $s\geq 1$, we get, \begin{equation*}\E[\mathcal{B}_n^p]  \xrightarrow{n \to \infty}{} \sum_{s=1}^{a-1} \frac{p_s}{\sum_{\ell=1}^{s} p_{\ell}} + p_a = \sum_{s=1}^{a} \frac{p_s}{\sum_{\ell=1}^{s} p_{\ell}}.\end{equation*}
In particular, for $p$ the uniform distribution over $[a]$ we have $p_i= \frac{1}{a}$, so we get
\begin{equation} \label{ExpectBranchA}
\E[\mathcal{B}_n^a]
= \sum_{s=1}^{a} \frac{1}{s} \left ( 1- \left ( 1- \frac{s}{a}\right )^{n-1} \right ) 
\end{equation}
and 
\begin{equation*}\E[\mathcal{B}_n^a]  \xrightarrow{n \to \infty}  \sum_{s=1}^{a} \frac{1}{s} = H_a.\end{equation*}

To show that for  fixed $n$ fixed, if $a \to \infty$, the expectation tends to the expectation of the number of branches for URTs we will manipulate the right-most term  of \eqref{ExpectBranchA} to get a formulation involving a geometric sum and then use the following identity ${\sum_{s=1}^{a} s^k = \frac{a^{k+1}}{k+1} + \mathcal{O}(a^k)}$, that holds for all $k \in \mathbb{N}_0$. We have
\begin{eqnarray*}
\E[\mathcal{B}_n^a] &=& \sum_{s=1}^{a-1} \frac{1}{s} \left ( 1- \left ( \frac{a-s}{a}\right )^{n-1} \right )+ \frac{1}{a} 
=  \sum_{s=1}^{a-1} \frac{1}{a} \frac{ 1- \left ( \frac{a-s}{a}\right )^{n-1} }{1-\frac{a-s}{a}}+ \frac{1}{a} \\
&=&  \sum_{\ell=0}^{n-2} \frac{1}{a^{\ell+1}}  \left [ \frac{a^{\ell+1}}{\ell+1} + \mathcal{O}(a^{\ell}) \right ]+ \frac{1}{a}
= \sum_{\ell=0}^{n-2} \frac{1}{\ell+1} + \mathcal{O}\left( \frac{1}{a} \right) \xrightarrow{a \to \infty} H_{n-1},
\end{eqnarray*}
concluding the proof.
\hfill $\square$
\end{proof}

\begin{remark}
The observation that $\mathbb{E}[\mathcal{B}_n^a] \rightarrow H_{n-1}$ as $a \rightarrow \infty$ is no surprise. $H_{n-1}$ is the same as the expected number of branches in a URT on $n$ vertices and since $X_i$'s tend to be distinct when $a$ is large, $a$-RTs get close to URTs. We will see this asymptotically similar  behaviour throughout the paper and in particular use it in Section \ref{sec:URT} to derive results on URTs from our results about BRTs.
\end{remark}

\begin{remark} That the number of branches is equal to the number of anti-records allows us to conclude immediately that a $p$-BRT can have at most $a$ branches because for $\gamma(i)$ to be an anti-record in a riffle shuffle permutation, it must be the first card of one of the $a$ piles. 
Note that in general, if one does not require an explicit value for the expected value, then it is easy to obtain upper bounds for the expected value of the number of branches, also for infinite $a$. A simple approach would be to consider a Markov chain on $\mathbb{N}$, that starts at time $t=1$ with distribution $p$ (possibly with countable support), and which moves along $\mathbb{N}$ by choosing the new state according to $p$ independently of all else. Then clearly the number of branches will be less or equal to the hitting time of the chain to state $1$. Since the expected hitting time in this case is merely $1/p_1$, we obtain $\mathbb{E}[\mathcal{B}_n^p]  \leq 1 /p_1$. 
\end{remark}

\begin{remark}
The previous remark in particular implies that for a given $p$, $\mathbb{E}[\mathcal{B}_n^p] $ is uniformly bounded in $n$.  A  BRT type random tree with unbounded  branches can be formed if one may replace riffle shuffles by unfair permutations  \cite{AIP}, \cite{Prodinger}. 
This alternative random  permutation model is formed via independent and continuous $X_i$'s removing the chance of them being equal and allowing an infinite number of anti-records, because the $X_i$s are not distributed on a finite set.
\end{remark}

Next, we focus on the variance of the number of branches. 
\begin{theorem} \label{thm:BRTVarBranches}
Let $\mathcal{B}_{n}^{p}$ denote the number of branches of a $p$-BRT $\mathcal{T}_n^p$. Then
\begin{eqnarray*}  \label{equ:VarBranches}
\Var(\mathcal{B}_n^p) &=& 
 \sum_{s=1}^{a-1} \frac{p_s}{\sum_{\ell=1}^{s}p_{\ell}} \left ( 1- \left ( \sum_{\ell=s+1}^{a} p_{\ell}\right ) ^{n-1}\right )   - \sum_{s=1}^{a-1} p_s 
 - \sum_{s=1}^{a-1} p_s^2  \left (\sum_{\ell=s+1}^{a} p_{\ell} \right )^2  \frac{1- \left (\sum_{\ell=s+1}^{a} p_{\ell} \right )^{2(n-2)}}{1 -\left (\sum_{\ell=s+1}^{a} p_{\ell} \right )^{2}} \\
&-& 2  \sum_{s=2}^{a-1} \sum_{r=1}^{s-1} p_s p_r \left (\sum_{\ell=s+1}^{a} p_{\ell} \sum_{\ell=r+1}^{a} p_{\ell} \right ) \frac{1- \left ( \sum_{\ell=s+1}^{a} p_{\ell} \sum_{\ell=r+1}^{a} p_{\ell} \right )^{n-2}}{1 - \sum_{\ell=s+1}^{a} p_{\ell} \sum_{\ell=r+1}^{a} p_{\ell} }\\
&+& 2  \sum_{s=2}^{a-1} \frac{p_s \sum_{\ell=s+1}^{a}p_{\ell}}{\sum_{\ell=1}^{s}p_{\ell}} \sum_{r=1}^{s-1} \frac{p_r}{\sum_{q = 1}^{r} p_q}  \left ( 1 -  \left ( \sum_{\ell=s+1}^{a}p_{\ell}\right )^{n-3}\right )\\
&-& 2 \sum_{s=2}^{a-1}p_s \sum_{\ell=s+1}^{a}p_{\ell} \sum_{r=1}^{s-1} \frac{p_r \sum_{q = r+1}^{a} p_q}{\sum_{q = 1}^{r} p_q}\frac{1}{\sum_{q = r+1}^{s} p_q} \cdot \left ( \left (\sum_{q = r+1}^{a} p_q\right)^{n-3} - \left( \sum_{\ell=s+1}^{a}p_{\ell}\right)^{n-3} \right ) \\
&-& 2 \sum_{s=1}^{a-1}p_s  \sum_{\ell=s+1}^{a}p_{\ell} \sum_{r=1}^{a-1}\frac{p_r}{\sum_{\ell=1}^{r}p_{\ell} }  \left ( \sum_{\ell=r+1}^{a}p_{\ell} \right )^2 \frac{1- \left ( \sum_{\ell=s+1}^{a}p_{\ell} \sum_{\ell=r+1}^{a}p_{\ell}\right )^{n-3}}{1- \sum_{\ell=s+1}^{a}p_{\ell} \sum_{\ell=r+1}^{a}p_{\ell}} \\
&+& 2 \sum_{s=1}^{a-1}\frac{p_s}{\sum_{\ell=1}^{s}p_{\ell}}  \sum_{\ell=s+1}^{a} p_{\ell} \sum_{r=1}^{a-1}\frac{p_r}{\sum_{\ell=1}^{r}p_{\ell} } \left ( \sum_{\ell=r+1}^{a}p_{\ell}\right )^{n-1} \left ( 1 - \left ( \sum_{\ell=s+1}^{a}p_{\ell}\right )^{n-3}\right )  
\end{eqnarray*}
and
\begin{eqnarray*}
\Var(\mathcal{B}_n^p) &\xrightarrow{n \to \infty}& 
  \sum_{s=1}^{a-1} \frac{p_s}{\sum_{\ell=1}^{s}p_{\ell}}  - \sum_{s=1}^{a-1} p_s  
-  \sum_{s=1}^{a-1} p_s^2  \left (\sum_{\ell=s+1}^{a} p_{\ell} \right )^2  \frac{1}{1 -\left (\sum_{\ell=s+1}^{a} p_{\ell} \right )^{2}} \\
&& - 2  \sum_{s=2}^{a-1} \sum_{r=1}^{s-1} p_s p_r \left (\sum_{\ell=s+1}^{a} p_{\ell} \sum_{\ell=r+1}^{a} p_{\ell} \right ) \frac{1}{1 - \sum_{\ell=s+1}^{a} p_{\ell} \sum_{\ell=r+1}^{a} p_{\ell} }\\
&& + 2  \sum_{s=2}^{a-1} \frac{p_s \sum_{\ell=s+1}^{a}p_{\ell}}{\sum_{\ell=1}^{s}p_{\ell}} \sum_{r=1}^{s-1} \frac{p_r}{\sum_{q = 1}^{r} p_q}  \\
&& - 2 \sum_{s=1}^{a-1}p_s  \sum_{\ell=s+1}^{a}p_{\ell} \sum_{r=1}^{a-1}\frac{p_r}{\sum_{\ell=1}^{r}p_{\ell} }  \left ( \sum_{\ell=r+1}^{a}p_{\ell} \right )^2 \frac{1}{1- \sum_{\ell=s+1}^{a}p_{\ell} \sum_{\ell=r+1}^{a}p_{\ell}}. \\
\end{eqnarray*}

\end{theorem}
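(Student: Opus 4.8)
The plan is to compute $\Var(\mathcal{B}_n^p) = \E[(\mathcal{B}_n^p)^2] - (\E[\mathcal{B}_n^p])^2$ by expanding the square of the indicator sum, exactly as in the proof of the expectation, but now keeping track of pair-covariances. Writing $\mathcal{B}_n^p = \sum_{i=2}^n A_i$ with $A_i = \1(\gamma(i)\text{ is an anti-record})$ and $A_2 = 1$, I would use $\Var(\mathcal{B}_n^p) = \sum_{i,j} \Cov(A_i, A_j)$. Since $A_2$ is deterministic it contributes nothing, so only indices $i,j \geq 3$ matter. The inverse-shuffle description gives i.i.d.\ digits $X_2,\dots,X_n$ with $\Pro(X_k = s) = p_s$, and $A_i = \1(X_i < \min\{X_2,\dots,X_{i-1}\})$. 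First I would recall from the expectation proof that $\E[A_i] = \sum_{s=1}^{a-1}\bigl(\sum_{\ell=s+1}^a p_\ell\bigr)^{i-2} p_s$ for $i\geq 3$, so that $\sum_i \E[A_i]$ reproduces the first line of the claimed formula; the diagonal contribution $\sum_i \Var(A_i) = \sum_i \E[A_i] - \sum_i \E[A_i]^2$ will produce the first two lines after squaring $\E[A_i]$ and summing the resulting geometric series in $i$ (this accounts for the $-\sum_{s} p_s$ term, the $\sum_s p_s^2(\cdots)$ term with exponent $2(n-2)$, and the double sum over $r<s$ with product $\sum_{\ell=s+1}^a p_\ell \sum_{\ell=r+1}^a p_\ell$).

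The core of the argument is the off-diagonal term $2\sum_{3\leq i<j}\Cov(A_i,A_j)$. For fixed $i<j$ I would compute $\E[A_iA_j] = \Pro(X_i < \min\{X_2,\dots,X_{i-1}\},\ X_j < \min\{X_2,\dots,X_{j-1}\})$ by conditioning on the values $X_i = s$ and $X_j = r$. Given $X_i = s$, the event $A_i$ forces $X_2,\dots,X_{i-1} > s$; given additionally $X_j = r$, the event $A_j$ forces $X_2,\dots,X_{j-1} > r$, but we must be careful because $X_i = s$ is itself one of the entries in $\{X_2,\dots,X_{j-1}\}$, so it also needs $s \geq \min$; more precisely $A_j$ requires $r < \min\{X_2,\dots,X_{i-1},\dots,X_{j-1}\}$ which, combined with $A_i$, splits according to whether $r<s$ or $r\geq s$ — and since $X_i=s$ sits in the list, $A_j$ holds only if $r \leq s$, and actually we need to track the two regimes. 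Writing $P_s = \sum_{\ell=s+1}^a p_\ell = \Pro(X_k > s)$, the entries $X_2,\dots,X_{i-1}$ (there are $i-2$ of them) must exceed $\max(r,s)=s$ (when $r\leq s$), contributing $P_s^{i-2}$, while the entries $X_{i+1},\dots,X_{j-1}$ (there are $j-i-1$ of them) must exceed $r$, contributing $P_r^{j-i-1}$, and $X_i=s$ must exceed $r$, which holds iff $r<s$ (or $r=s$ is impossible for strict inequality). Hence
\[
\E[A_iA_j] = \sum_{s=1}^{a-1}\sum_{r=1}^{s-1} p_s\, p_r\, P_s^{\,i-2}\, P_r^{\,j-i-1} \qquad (i<j),
\]
with the boundary subtlety at $r=s$ giving zero. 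Subtracting $\E[A_i]\E[A_j] = \sum_{s,r} p_s p_r P_s^{i-2} P_r^{j-2}$ and summing over $3\leq i<j\leq n$ (a double geometric sum: first in $i$ over the ratio $P_s/P_r$, then in the remaining variable) is what produces lines three through six of the finite-$n$ formula. I expect the bookkeeping here — correctly separating the ranges $r<s$, $r=s$, $r>s$, handling the three boundary index ranges $i=3$, $j=i+1$, $j=n$, and resumming the nested geometric series without sign errors — to be the main obstacle; the identity $\sum_{i}\sum_{j>i} x^{i} y^{j} = \frac{x\,y\,(\text{finite corrections})}{(1-x)(1-xy)}$-type manipulations must be done carefully to match the stated closed forms, particularly the term with $\sum_{q=r+1}^s p_q$ in a denominator, which arises precisely from partial-fractioning $\frac{1}{P_r - P_s} = \frac{1}{\sum_{q=r+1}^s p_q}$ when the ratio $P_s/P_r \neq 1$.

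Once the finite-$n$ identity is assembled by collecting the diagonal and off-diagonal pieces, the limiting formula follows immediately: since $p_1 > 0$ forces $P_s = \sum_{\ell=s+1}^a p_\ell < 1$ for every $s \geq 1$, every term of the form $P_s^{\,n-1}$, $P_s^{\,2(n-2)}$, $(P_sP_r)^{n-2}$, $P_s^{\,n-3}$, $P_r^{\,n-3}$, $(P_sP_r)^{n-3}$, $(P_r)^{n-1}$ tends to $0$ as $n\to\infty$. I would simply strike these out termwise in the finite-$n$ expression; the two lines of the finite formula that contain \emph{only} such vanishing factors (namely the fifth line with the $\bigl((\sum_{q=r+1}^a p_q)^{n-3} - (\sum_{\ell=s+1}^a p_\ell)^{n-3}\bigr)$ factor, and the seventh line with the $(\sum_{\ell=r+1}^a p_\ell)^{n-1}(1 - (\cdots)^{n-3})$ factor) drop out entirely, while the remaining lines lose their $n$-dependent correction and collapse to the six terms displayed in the second equation of the theorem. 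A brief justification that the (finitely many, for fixed $a$) terms are each bounded and converge is all that is needed to pass to the limit.
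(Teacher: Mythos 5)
Your proposal is correct and follows essentially the same route as the paper: decompose $\Var(\mathcal{B}_n^p)=\sum_i\Var(A_i)+2\sum_{i<j}\Cov(A_i,A_j)$, use the inverse-shuffle digits to get $\E[A_iA_j]=\sum_{s}\sum_{r<s}p_sp_r\bigl(\sum_{\ell=s+1}^ap_\ell\bigr)^{i-2}\bigl(\sum_{q=r+1}^ap_q\bigr)^{j-i-1}$ for $i<j$, and then resum the geometric series; this is exactly the paper's computation, and like the paper you leave the final lengthy resummation over $i<j$ as a sketch (the paper defers it to the first author's thesis). The limiting statement is also handled the same way, by noting that $p_1>0$ makes every power $\bigl(\sum_{\ell=s+1}^ap_\ell\bigr)^{n-c}$ vanish termwise.
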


\begin{proof} Let $\gamma$ be the permutation representation of  $\mathcal{T}_n^{p}$.
For the calculation of the variance difficulty arises from the dependence of the events $A_i=\1(\gamma(i) \text{ is an anti-record})$.  We will express the variance as 
$
\Var(\mathcal{B}_n^p) = \Var\left ( 1+ \sum_{i=3}^n A_i \right ) = \sum_{i=3}^{n} \Var(A_i) + 2 \sum_{3 \leq i < j \leq n} \Cov (A_i,A_j).$
For $3 \leq i < j \leq n$
\begin{eqnarray*}
 \E[A_i A_j]  &=& \Pro \left (X_i < \min\{X_2, \dots, X_{i-1}\}, X_j < \min\{X_2, \dots, X_{j-1}\} \right ) \\
&=& \Pro \left (X_j < \min\{X_2, \dots, X_{j-1}\} | X_i < \min\{X_2, \dots, X_{i-1}\} \right ) 
 \cdot \Pro \left (X_i < \min\{X_2, \dots, X_{i-1}\}\right )\\
&=& \sum_{s=1}^{a-1} \Pro \left (X_j < \min_{k=2, \dots, j-1}\{X_k\} | X_i < \min_{k=2, \dots, i-1}\{X_k\}, X_i=s \right )   \\
&& \quad  \cdot \; \;  \Pro \left (X_i < \min_{k=2, \dots, i-1}\{X_k\} | X_i =s \right ) \Pro \left (X_i =s \right ).
\end{eqnarray*}
If $X_i=1$, $X_j < X_i$ is not possible, so the above expression is only positive for $2\leq s \leq a-1$. For these $s$ we have:
\begin{equation*}
\begin{split}
& \Pro \left (X_j < \min_{k=2, \dots, j-1}\{X_k\} | X_i < \min_{k=2, \dots, i-1}\{X_k\}, X_i=s \right ) \\
&= \sum_{r=1}^{s-1} \underbrace{\Pro \left ( X_j < \min_{k=2,\dots, j-1}\{X_k\} | X_j = r, X_i =s , X_i< \min_{k=2, \dots, i-1}\{X_k\} \right )}_{= \Pro(r < X_{i+1}, \dots, X_{j-1})} \Pro(X_j=r)\\
&= \sum_{r=1}^{s-1}\left (\sum_{q = r+1}^{a} p_q\right)^{j-i-1}p_r.
\end{split}
\end{equation*}
So in total $3 \leq i < j \leq n$ we have for ,
\begin{equation*}\E[A_iA_j] = \sum_{s=2}^{a-1}p_s \left ( \sum_{\ell=s+1}^{a}p_{\ell}\right )^{i-2}  \sum_{r=1}^{s-1} p_r \left (\sum_{q = r+1}^{a} p_q\right)^{j-i-1} .
\end{equation*}
By summing over all $i$ and $j$ and some straightforward but lengthy simplifications we then get the above results. For details see \cite{ellatez}. \hfill $\square$
\end{proof}

\begin{theorem} \label{thm:aRTBranchesVariance}
Let $\mathcal{B}_n^{a}$ denote the number of branches in an $a$-RT. Then
\begin{eqnarray*}\label{VarBranchesA}
\Var(\mathcal{B}_n^a) &=& \hspace{1em} \sum_{s=1}^{a-1} \frac{1}{s} \left ( 1- \left ( \frac{a-s}{a}\right ) ^{n-1}\right )  - \frac{a-1}{a} 
 - \sum_{s=1}^{a-1} \frac{1}{a^2}  \left ( \frac{a-s}{a} \right )^2  \frac{1- \left (\frac{a-s}{a} \right )^{2(n-2)}}{1 -\left (\frac{a-s}{a} \right )^{2}} \\
&& - 2  \sum_{s=2}^{a-1} \sum_{r=1}^{s-1} \frac{1}{a^2} \frac{a-s}{a} \frac{a-r}{a} \frac{1- \left ( \frac{a-s}{a} \frac{a-r}{a} \right )^{n-2}}{1 - \frac{a-s}{a}  \frac{a-r}{a} }
 +2  \sum_{s=2}^{a-1}\frac{a-s}{as} \sum_{r=1}^{s-1} \frac{1}{r}  \left ( 1 -  \left (\frac{a-s}{a}\right )^{n-3}\right )\\
&&  - 2 \sum_{s=2}^{a-1} \frac{a-s}{a^2} \sum_{r=1}^{s-1} \frac{a-r}{r}\frac{1}{ s-r} \left ( \left (\frac{a-r}{a}\right)^{n-3} - \left(\frac{a-s}{a}\right)^{n-3} \right ) \\
&&  - 2 \sum_{s=1}^{a-1}\frac{a-s}{a^2} \sum_{r=1}^{a-1}\frac{1}{r}  \left (\frac{a-r}{a} \right )^2 \frac{1- \left ( \frac{a-s}{a} \frac{a-r}{a}\right )^{n-3}}{1- \frac{a-s}{a} \frac{a-r}{a}} \\
&& + 2 \sum_{s=1}^{a-1}  \frac{a-s}{sa} \sum_{r=1}^{a-1}\frac{1}{r} \left (\frac{a-r}{a}\right )^{n-1} \left ( 1 - \left (\frac{a-s}{a}\right )^{n-3}\right),
\end{eqnarray*}
and 
\begin{eqnarray*} \label{VarBranchesANinfty}
\Var(\mathcal{B}_n^{a}) \xrightarrow {n \to \infty}   H_a  - \frac{a-1}{a}  + \frac{2}{a} \sum_{s=2}^{a-1}\frac{a-s}{s} \sum_{r=1}^{s-1} \frac{1}{r}
-  \frac{1}{a^2} \sum_{s=1}^{a-1}  \frac{s^2}{a^2 - s^{2}} 
&-&  \frac{2}{a^2}   \sum_{s=1}^{a-2} \sum_{r=1}^{a-s+1} \frac{sr}{a^2 - sr }\\
&-& \frac{2}{a^2} \sum_{s=1}^{a-1} \sum_{r=1}^{a-1}\frac{1}{r} \frac{s (a-r)^2}{a^2- s (a-r)}. 
\end{eqnarray*}
Moreover, for fixed $n$,  when $a$ increases the variance approaches the variance from the uniform case:
\begin{equation}\Var(\mathcal{B}_n^{a}) \xrightarrow {a \to \infty} H_{n-1}- H_{n-1}^{(2)}. \end{equation}
\end{theorem}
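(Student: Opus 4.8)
The plan is to establish the three assertions in turn, in each case leaning on the computation already carried out for the general $p$-BRT in Theorem \ref{thm:BRTVarBranches}.

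For the exact formula I would simply specialize Theorem \ref{thm:BRTVarBranches} to the uniform weights $p_s = 1/a$. Under this choice the two partial sums that recur throughout become $\sum_{\ell=1}^{s} p_\ell = s/a$ and $\sum_{\ell=s+1}^{a} p_\ell = (a-s)/a$, so that every ratio of the form $p_s/\sum_{\ell=1}^{s}p_\ell$ collapses to $1/s$ and every geometric base becomes $(a-s)/a$. Substituting these into each of the eight summands of $\Var(\mathcal{B}_n^p)$ and carrying the factors of $1/a$ through reproduces the displayed closed form line by line; this step is entirely mechanical, if lengthy.

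For the $n\to\infty$ limit I would pass to the limit termwise in the exact formula just obtained. Because $0 < (a-s)/a < 1$ for every $1\le s\le a-1$, all geometric powers $\bigl(\tfrac{a-s}{a}\bigr)^{cn+d}$ tend to $0$; consequently the two summands carrying a pure decaying factor (the one built from $\bigl(\tfrac{a-r}{a}\bigr)^{n-3}-\bigl(\tfrac{a-s}{a}\bigr)^{n-3}$ and the one built from $\bigl(\tfrac{a-r}{a}\bigr)^{n-1}$) vanish, while the remaining summands stabilize. Reindexing the surviving double sums by $t=a-s$ (and $t'=a-r$ where needed) then rewrites them in the stated shape; equivalently, one may specialize the general $n\to\infty$ limit already recorded in Theorem \ref{thm:BRTVarBranches}.

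The most interesting assertion is the $a\to\infty$ limit, and here I would deliberately avoid the closed form and instead work at the level of the per-index quantities produced inside the proof of Theorem \ref{thm:BRTVarBranches}. With $A_i=\1(\gamma(i)\text{ is an anti-record})$, those computations give, for the uniform weights, $\E[A_i]=\sum_{s=1}^{a-1}\bigl(\tfrac{a-s}{a}\bigr)^{i-2}\tfrac1a$ and, for $i<j$, $\E[A_iA_j]=\sum_{s=2}^{a-1}\tfrac1a\bigl(\tfrac{a-s}{a}\bigr)^{i-2}\sum_{r=1}^{s-1}\tfrac1a\bigl(\tfrac{a-r}{a}\bigr)^{j-i-1}$. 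Each is a Riemann sum, so as $a\to\infty$ they converge to the Beta integrals $\int_0^1(1-x)^{i-2}\,dx=\tfrac{1}{i-1}$ and $\int_0^1(1-x)^{i-2}\!\int_0^x(1-y)^{j-i-1}\,dy\,dx=\tfrac{1}{(i-1)(j-1)}$. Hence $\Cov(A_i,A_j)\to 0$ and $\Var(A_i)\to\tfrac{1}{i-1}\bigl(1-\tfrac{1}{i-1}\bigr)$. Since $n$ is fixed, $\Var(\mathcal{B}_n^a)=\sum_{i=3}^n\Var(A_i)+2\sum_{3\le i<j\le n}\Cov(A_i,A_j)$ is a \emph{finite} sum, so termwise passage to the limit is immediate and gives $\sum_{i=3}^n\tfrac{1}{i-1}\bigl(1-\tfrac{1}{i-1}\bigr)=\sum_{j=2}^{n-1}\bigl(\tfrac1j-\tfrac{1}{j^2}\bigr)=H_{n-1}-H_{n-1}^{(2)}$, where $H_{n-1}^{(2)}=\sum_{j=1}^{n-1}1/j^2$ and the two constant terms cancel. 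This limit is exactly the variance of the anti-record count of a uniform random permutation of the $n-1$ symbols $\{2,\dots,n\}$, consistent with the remark that $a$-RTs approach URTs as $a\to\infty$.

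I expect the main obstacle to be bookkeeping rather than conceptual. Specializing all eight terms for the exact formula is error-prone purely because of their number, and if one tried to read off the $a\to\infty$ limit directly from the closed form one would have to decide which of the several $a^{-1}$- and $a^{-2}$-scaled multiple sums actually survive at order one (a small-$n$ check shows, for instance, that the term with base $(a-s)(a-r)/a^2$ is the one producing $-1/4$ when $n=3$, while the squared-power term contributes only at order $(\log a)/a$) and then verify that their contributions cancel down to exactly $-H_{n-1}^{(2)}$. Routing the $a\to\infty$ statement through $\E[A_i]$ and $\E[A_iA_j]$ instead reduces it to two elementary Beta integrals together with the trivial observation that, for fixed $n$, only finitely many indices occur, so no integrability or uniform-control argument is required.
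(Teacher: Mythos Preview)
Your treatment of the exact formula and of the $n\to\infty$ limit is exactly what the paper does: both follow by direct specialization of Theorem~\ref{thm:BRTVarBranches} to $p_s=1/a$, and there is nothing to add.

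For the $a\to\infty$ limit your route differs from the paper's. The paper (deferring details to \cite{ellatez}) states that the argument ``involves generating functions and some easy asymptotic arguments,'' presumably working from the closed-form expression for $\Var(\mathcal{B}_n^a)$ and extracting the $a\to\infty$ behavior term by term. You instead go back to the per-index quantities $\E[A_i]$ and $\E[A_iA_j]$ from the proof of Theorem~\ref{thm:BRTVarBranches}, recognize each as a Riemann sum, and evaluate the limiting Beta integrals $\int_0^1(1-x)^{i-2}\,dx=\tfrac{1}{i-1}$ and $\int_0^1(1-x)^{i-2}\int_0^x(1-y)^{j-i-1}\,dy\,dx=\tfrac{1}{(i-1)(j-1)}$ directly. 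This is correct and arguably cleaner: it sidesteps the delicate cancellation analysis you yourself flag as the main hazard of the closed-form route, it explains \emph{why} the limit is $H_{n-1}-H_{n-1}^{(2)}$ (the covariances vanish because the limiting indicators are independent, matching the classical URT anti-record structure), and the finiteness of $n$ makes the termwise limit trivially justified. The generating-function approach, by contrast, stays closer to the explicit variance formula and would be the natural choice if one also wanted finer information such as the rate of convergence in $a$.
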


The proof of this last statement involves generating functions and some easy asymptotic arguments. We do not include it the proof here since it is too lengthy. It  can be found in \cite{ellatez}. In general, throughout the paper, we do not include the variance computations except the proofs of Theorem \ref{thm:kDesVarGeneral} and Corollary \ref{cor:aRTkDescendantsVar}, and we refer to the first author's thesis for these calculations.

\begin{remark}
Another approach to the number of branches of a BRT is obtained by constructing a riffle shuffle permutation by sequential shuffling, see \cite{Bayer92}. The first card from pile $i$ can only be an anti-record if it comes before any card of a pile with a smaller index. By considering that we can first shuffle the first $i-1$ piles and then shuffle the obtained shuffled pile with pile $i$, this means that the first card from pile $i$ must be on top after this shuffling step. The probability of this event is in turn proportional to the pile sizes. Conditioning on the pile sizes of the riffle shuffle permutation, these are independent events and we can thus obtain the expected number of branches and a formula for the variance. For details see \cite{ellatez}.
\end{remark}

\begin{remark}
We argued at the beginning that the number of branches is limited by the number of piles we split the deck into, i.e. $a$. That argument  also works for the degree of any node: if two nodes are children of $i$, the smaller one must come later in the permutation, thus the nodes attached to $i$ form a decreasing sequence. But in a $p$-biased riffle shuffle permutation any decreasing sequence must consist of cards from different piles since cards from the same pile remain in the same order. Since there are $a$ piles, this implies that the degree of any node is limited by $a$. Since $a$-ary trees also have this property, this suggests the question whether $a$-ary recursive trees share other properties with trees constructed from $a$-shuffles, or $p$-biased riffle shuffle with $|p|=a$.
\end{remark}

\section{Number of nodes with  exactly $k$ descendants}\label{sec:descendants}
   
A \emph{descendant} of any vertex $v$ is any vertex which is either the child of $v$ or is the descendant of any of the children of $v$.     
In a BRT the number of nodes with at least $k$ descendants can also be analyzed using the construction of an inverse riffle shuffle. The number of nodes with exactly $k$ descendants in binary recursive trees was previously studied by Devroye \cite{Devroye}.  In particular, denoting the number of nodes with exactly $k$ left descendants in a uniform binary tree of size $n$ by $L_{kn}$, he  shows that $$\frac{L_{kn} - n p_k }{\sqrt{n} \sigma_k} \longrightarrow_d N(0,1),$$ as  $n \rightarrow \infty$, where $$\sigma_k^2 = p_k (1 - p_k)  - 2 (k + 1) p_k^2  + 2 \rho_k,$$ and $$p_k =  \frac{1}{(k+2)(k+1)} \qquad \text{and} \qquad \rho_k = \frac{1}{(2 k + 3) (2 k + 2) (k  + 1)}.$$  Devroye also notes that the number of nodes with $k$ descendants in a uniform random recursive tree is equal to the number of nodes in the
associated random binary search tree with $k$ left descendants. For a special case, recalling that a node with no descendants is called a \emph{leaf}, the number of leaves in URTs were previously studied in \cite{Altokislak}, \cite{Zhang}, \cite{NajockHeyde82}, \cite{NaRapoport} and \cite{Flajolet} among others.

Turning our attention back to BRTs, a node $i$ has at least $k$ descendants if in the permutation at least $k$ entries after $i$ are larger than $i$. This is the case if, given $\gamma(j)=i$, we have ${\gamma(j+1), \dots, \gamma(j+k) > i}$, which is equivalent to $X_j \leq \{ X_{j+1}, \dots, X_{j+k} \}$ in the inverse riffle shuffle construction. We thus get a node with at least $k$ descendants for every $X_j$ satisfying the above. Here is an example. 

\begin{example}
Let our deck consist of 8 cards. Assume we cut it into 3 piles. We first construct the inverse riffle shuffle by assigning digits from 1 to 3 to every card as can be seen in Figure \ref{fig:RiffleShuffleInverseConstruction}.
\begin{figure}[H]
\begin{center}
\begin{tabular}{ c c c c c c c }
$ X_2$ & $X_3$ & $X_4$ & $X_5$ & $X_6$ & $X_7$ & $X_8  $\\ 
1 & 2 & 3 & 1 & 3 & 1  & 1 
\end{tabular}
\end{center}
\caption{Example of the construction of an inverse riffle shuffle permutation.} \label{fig:RiffleShuffleInverseConstruction}
\end{figure}
This gives the inverse permutation $\gamma^{-1} = 2578346$ and thus $\gamma=2673845$ with the corresponding recursive tree in Figure \ref{fig:RifSufExDes}. 

\begin{figure}[H]
\tikzstyle{every node}=[circle, draw]

\begin{center}
\begin{tikzpicture}[node distance=1.6cm,
  thick,main node/.style={circle,draw,font=\sffamily\bfseries}]
  \node[main node] (1) {1};
  \node[main node] (2) [below of=1] {2};
  \node[main node] (3) [below left of=2] {3};
  \node[main node] (4) [below left of=3] {4};
  \node[main node] (5) [below  of = 4] {5};
  \node[main node] (6) [below right of=2] {6};
  \node[main node] (7) [below  of=6] {7};
  \node[main node] (8) [below right of=3] {8};
 
\path[every node/.style={font=\sffamily\small}]
    (1) edge node {} (2)
    (2) edge node {} (3)
    (2) edge node {} (6)
    (3) edge node {} (4)
    (3) edge node {} (8)
    (4) edge node {} (5)
    (6) edge node {} (7);

\end{tikzpicture}
\end{center}
\caption{The biased recursive tree corresponding to $\pi= 2673845$.} \label{fig:RifSufExDes}
\end{figure}
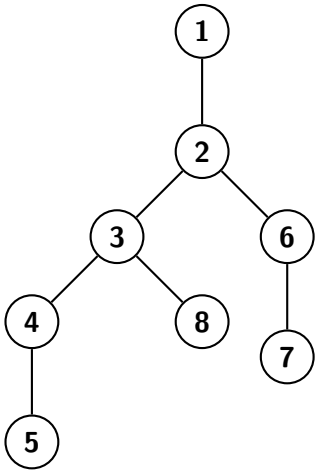
As  can be seen in the figure above, there are 2 nodes with at least 2 descendants, 2 and 3. This corresponds to what we can derive from the permutation: $\pi(5) = 3$ and we have $X_5 \leq \{X_6, X_7\}$. Also $\pi(2)= 2$ and we have $X_2 \leq \{ X_3, X_4\}$.
\end{example}

We will now use this observation to calculate the expectation and variance of the number of nodes with at least $k$ descendants, and will then prove a central limit theorem and a strong law  by making use of the underlying local dependence when $k$ is fixed.

\begin{theorem} \label{thm:BRTExpkDesce}
Let $Y_{\geq k, n}^{p}$ denote the number of nodes with at least $k$ descendants in a $p$-BRT, then
\begin{equation*}
\E[Y_{\geq k, n}^{p}] = (n-k-1)\sum_{s=1}^{a} p_s \left (\sum_{r=s}^{a} p_r  \right )^k +1.
\end{equation*}
Moreover, if $p$ is the uniform distribution over $[a]$, 
\begin{equation*}
\E[Y_{\geq k, n}^{a}] = (n-k-1)  \frac{1}{a^{k+1}} \sum_{s=1}^{a} s^k +1 
\end{equation*}
and in particular, as $a \to \infty$, we get 
\begin{equation*}
\E[Y_{\geq k, n}^{a}] \longrightarrow \frac{n}{k+1}.
\end{equation*}
\end{theorem}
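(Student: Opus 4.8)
The plan is to compute $\E[Y_{\geq k,n}^p]$ directly from the inverse riffle shuffle description, then specialize and pass to the limit. Recall that a node has at least $k$ descendants precisely when, writing $\gamma(j)=i$, the entries $\gamma(j+1),\dots,\gamma(j+k)$ are all larger than $i$, which in the inverse construction with i.i.d.\ digits $X_2,\dots,X_n$ (with $\Pro(X_m=s)=p_s$) is equivalent to $X_j \leq X_{j+1},\dots,X_{j+k}$. So first I would introduce indicators $Z_j = \1(X_j \leq \min\{X_{j+1},\dots,X_{j+k}\})$ for the positions $j=2,\dots,n-k$ where this makes sense, and argue that $Y_{\geq k,n}^p =_d 1 + \sum_{j=2}^{n-k} Z_j$. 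The boundary term $+1$ comes from node $1$ (the root), which always has $n-1 \geq k$ descendants for $n$ large; one should be slightly careful about the exact range of $j$, but this gives $n-k-1$ genuine indicator terms plus the root.

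Next I would compute $\Pro(Z_j = 1)$. By conditioning on the value of $X_j$, using independence and that the $X$'s are identically distributed,
\begin{equation*}
\Pro(X_j \leq \min\{X_{j+1},\dots,X_{j+k}\}) = \sum_{s=1}^{a} \Pro(X_j = s)\, \Pro(X_{j+1}\geq s)^k = \sum_{s=1}^{a} p_s \Bigl(\sum_{r=s}^{a} p_r\Bigr)^k,
\end{equation*}
since each of the $k$ variables $X_{j+1},\dots,X_{j+k}$ must independently be $\geq s$. This probability does not depend on $j$, so by linearity of expectation $\E[Y_{\geq k,n}^p] = (n-k-1)\sum_{s=1}^{a} p_s(\sum_{r=s}^{a}p_r)^k + 1$, which is the first claim. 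For the uniform case $p_s = 1/a$, we have $\sum_{r=s}^{a} p_r = (a-s+1)/a$, so the sum becomes $\frac{1}{a}\sum_{s=1}^a \bigl(\frac{a-s+1}{a}\bigr)^k = \frac{1}{a^{k+1}}\sum_{s=1}^a (a-s+1)^k = \frac{1}{a^{k+1}}\sum_{s=1}^a s^k$ after reindexing, giving the second claim.

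For the asymptotic statement as $a\to\infty$ with $n$ (and $k$) fixed, I would use the elementary estimate $\sum_{s=1}^a s^k = \frac{a^{k+1}}{k+1} + \mathcal{O}(a^k)$ already invoked in the proof of the expectation of the number of branches. Then $\frac{1}{a^{k+1}}\sum_{s=1}^a s^k = \frac{1}{k+1} + \mathcal{O}(1/a) \to \frac{1}{k+1}$, hence $\E[Y_{\geq k,n}^a] \to (n-k-1)\cdot\frac{1}{k+1} + 1 = \frac{n-k-1}{k+1} + 1 = \frac{n}{k+1}$, as claimed.

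I do not expect any serious obstacle here; the computation is a routine conditioning argument once the translation to the indicators $Z_j$ is in place. The one point requiring a little care is the exact bookkeeping of the index range and the boundary contribution of the root node (ensuring the count is exactly $n-k-1$ nontrivial terms plus one), and making sure the convergence $\frac{1}{a^{k+1}}\sum_{s=1}^a s^k \to \frac{1}{k+1}$ is applied with $n,k$ fixed so that the $\mathcal{O}(1/a)$ term, possibly multiplied by the fixed factor $n-k-1$, still vanishes.
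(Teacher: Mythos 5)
Your proposal is correct and follows essentially the same route as the paper: the same indicators $C_i^k=\1(X_i\leq\{X_{i+1},\dots,X_{i+k}\})$ built from the inverse riffle shuffle digits, the same conditioning on the value of $X_i$ to get $\sum_{s=1}^a p_s\bigl(\sum_{r=s}^a p_r\bigr)^k$, and the same estimate $\sum_{s=1}^a s^k=\frac{a^{k+1}}{k+1}+\mathcal{O}(a^k)$ for the limit as $a\to\infty$. The bookkeeping of the index range ($n-k-1$ nontrivial terms plus the root's contribution of $1$) matches the paper's decomposition $\sum_{i=2}^{n-k}C_i^k+1$ exactly.
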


\begin{proof}
Let $C_i^k = \1(X_i \leq \{X_{i+1}, \dots, X_{i+k}\})$, with the $X_j$s having distribution $p$ over $[a]$.. Then
\[
Y_{\geq k, n}^{p} =_d \sum_{i=1}^{n-k} C_i^k = \sum_{i=2}^{n-k} C_i^k +1.
\]
Also for $2 \leq i \leq n-k$, 
\begin{eqnarray*}
\E[C_i^k] = \Pro(X_i \leq \{X_{i+1}, \dots, X_{i+k}\}) 
&=& \sum_{s=1}^{a} \Pro(s \leq \{X_{i+1}, \dots, X_{i+k}\}) \Pro(X_i = s)\\
&=& \sum_{s=1}^{a} p_s \left (\sum_{r=s}^{a} p_r  \right )^k. 
\end{eqnarray*}
Thus 
$$
\E[Y_{\geq k, n}^{p}] = \sum_{i=1}^{n-k} \E[C_i^k]
 = \sum_{i=2}^{n-k} \sum_{s=1}^{a} p_s \left (\sum_{r=s}^{a} p_r  \right )^k +1 
= (n-k-1)\sum_{s=1}^{a} p_s \left (\sum_{r=s}^{a} p_r  \right )^k +1.
$$
In particular, if $p_s = \frac{1}{a}$ for all $s$, we get 
$$
\E[Y_{\geq k, n}^{p}] = (n-k-1) \sum_{s=1}^{a} \frac{1}{a} \left ( \sum_{r=s}^{a} \frac{1}{a} \right )^k +1 
= (n-k-1)  \frac{1}{a^{k+1}} \sum_{s=1}^{a} s^k +1. 
$$
This gives, as $a \longrightarrow \infty$, $$
\lim_{a \to \infty} \E[Y_{\geq k, n}^{p}] = \lim_{a \to \infty} (n-k-1)  \frac{1}{a^{k+1}} \sum_{s=1}^{a} s^k +1
 = \lim_{a \to \infty} (n-k-1)  \frac{1}{a^{k+1}} \left [ \frac{a^{k+1}}{k+1} + \mathcal{O}(a^k) \right ] +1
 =  \frac{n}{k+1}.
$$
\end{proof}

\begin{theorem} \label{thm:kDesVarGeneral}
Let $Y_{\geq k, n}^{p}$ denote the number of nodes with at least $k$ descendants in a $p$-BRT, then
\begin{eqnarray*}
\Var(Y_{\geq k, n}^{p}) &=& \sum_{s=1}^{a} p_s \left (\sum_{r=s}^{a} p_r  \right )^k \left [(n-k-1)+ p_1\left(2nk -3k(k+1)\right) \right ]  \\
&& +2 \sum_{s=2}^{a} p_s  \frac{1}{\sum_{u=1}^{s-1} p_u} \sum_{r=s}^{a} p_r \left ( \sum_{t=r}^{a} p_t \right )^k \\ 
&& \qquad \cdot \left [n-k-1  -(n-2k-1)\left (\sum_{u=s}^{a} p_u\right )^k - \frac{1- \left ( \sum_{u=s}^{a} p_u\right )^{k}}{1-\sum_{u=s}^{a} p_u}  \right ]\\
&& -  \left ( \sum_{s=1}^{a} p_s \left (\sum_{r=s}^{a} p_r  \right )^k \right )^2 \left [ n(2k+1) - (3k+1)(k+1)\right ].
\end{eqnarray*}
We moreover have
\begin{eqnarray*}
\lim_{n \to \infty} \frac{\Var(Y_{\geq k, n}^{p})}{n} =&  \sum_{s=1}^{a} p_s \left (\sum_{r=s}^{a} p_r  \right )^k \left ( 2kp_1 + 1 -  (2k+1) \sum_{s=1}^{a} p_s \left (\sum_{r=s}^{a} p_r  \right )^k \right )  \\
&+2 \sum_{s=2}^{a} \frac{p_s}{\sum_{u=1}^{s-1} p_u} \sum_{r=s}^{a} p_r \left ( \sum_{t=r}^{a} p_t \right )^k \left [1 -\left (\sum_{u=s}^{a} p_u\right )^k \right ].
\end{eqnarray*}
\end{theorem}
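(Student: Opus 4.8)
The plan is to compute the variance via the representation $Y_{\geq k,n}^p =_d \sum_{i=1}^{n-k} C_i^k$ with $C_i^k = \1(X_i \leq \{X_{i+1},\dots,X_{i+k}\})$, exactly as in the proof of Theorem~\ref{thm:BRTExpkDesce}, and exploit the fact that the indicators $C_i^k$ and $C_j^k$ are \emph{independent} whenever $|i-j| > k$, since they then depend on disjoint blocks of the i.i.d.\ sequence $(X_\ell)$. Thus $\Var(Y_{\geq k,n}^p) = \sum_i \Var(C_i^k) + 2\sum_{i<j,\, j-i\le k}\Cov(C_i^k,C_j^k)$, and the whole computation reduces to evaluating, for each lag $m = j-i \in \{1,\dots,k\}$, the joint probability $\E[C_i^k C_{i+m}^k] = \Pro(X_i \leq \{X_{i+1},\dots,X_{i+k}\},\ X_{i+m}\le\{X_{i+m+1},\dots,X_{i+m+k}\})$, then summing over $i$ and $m$ and subtracting $(\E Y)^2$ computed from Theorem~\ref{thm:BRTExpkDesce}.

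The key steps, in order: (i) Record $q_s := \sum_{r=s}^a p_r$, so that $\E[C_i^k] = \sum_s p_s q_s^k =: \mu$ and $\Var(C_i^k) = \mu - \mu^2$; there are $n-k-1$ such terms for $i\ge 2$ plus the boundary term from $i=1$, accounting for the leading $(n-k-1)$ in the formula. (ii) For overlapping pairs with lag $m$, condition on $X_i = s$ and $X_{i+m} = r$: the event forces $X_{i+1},\dots,X_{i+m-1} \ge s$ (the "inner" block, of length $m-1$) and $X_{i+m+1},\dots,X_{i+k}\ge \max(s,r)=$ (whichever is relevant given the ordering), while $X_{i+m},\dots,X_{i+k}\ge r$; carefully bookkeeping which of $s \le r$ or $r < s$ occurs, one gets a sum over $s$ and $r$ of $p_s p_r$ times a product of $q$-powers whose exponents depend on $m$ and $k$. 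Summing the geometric-type series over $m=1,\dots,k$ produces the factor $\frac{1-(\sum_{u=s}^a p_u)^k}{1-\sum_{u=s}^a p_u}$ and the terms linear in $n$ and quadratic in $k$. (iii) Collect all $n$-linear contributions: the diagonal gives $n-k-1$ times $\mu$; the off-diagonal overlapping pairs contribute terms like $p_1(2nk - 3k(k+1))\mu$ from the $s=1$ (equivalently $r=1$) boundary of the double sum, and the $-(\E Y)^2$ expansion contributes $-[n(2k+1)-(3k+1)(k+1)]\mu^2$. (iv) For the limit, divide by $n$ and keep only the coefficients of $n$: this kills every bounded term, leaving $\mu(2kp_1 + 1 - (2k+1)\mu) + 2\sum_{s\ge2}\frac{p_s}{\sum_{u<s}p_u}\sum_{r\ge s}p_r q_r^k (1 - q_s^k)$, as claimed.

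The main obstacle will be step (ii): getting the exponents in the double sum $\sum_{s=2}^a \sum_{r=s}^a$ exactly right, and in particular handling the case split according to whether the "outer" block after position $i+m$ must dominate $s$ or $r$. One must be careful that the inner constraint $X_{i+1},\dots,X_{i+m-1}\ge s$ is correctly separated from the tail constraint, that the index $i+m$ itself carries the constraint $X_{i+m}\le\{X_{i+m+1},\dots,X_{i+m+k}\}$ \emph{in addition to} $X_{i+m}\le\{X_i's tail\}$, and that double-counting of the $i=1$ boundary node (which is always a "node with at least $k$ descendants" contributing the additive $+1$ in the mean) is handled — it has zero variance but does covary trivially. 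Once the per-lag joint probability is in closed form, the remaining work is summing geometric series in $m$ and in $i$, collecting like terms, and algebraic simplification; this is lengthy but mechanical, and (consistent with the paper's stated policy) I would present the lag-$m$ joint probability and the final collected sums, relegating the fully expanded bookkeeping to \cite{ellatez}.
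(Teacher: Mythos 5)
Your proposal follows essentially the same route as the paper's own proof in the appendix: decompose $\Var(Y_{\geq k,n}^p)$ into variances of the indicators $C_i^k$ plus covariances of overlapping pairs ($j-i\le k$), evaluate $\E[C_i^kC_j^k]$ by conditioning on $X_i=s$ and $X_j=r$ (which yields $\sum_{s}p_s q_s^{j-i-1}\sum_{r=s}^a p_r q_r^k$ with $q_s=\sum_{u=s}^a p_u$), count the $2nk-3k(k+1)$ overlapping ordered pairs, and collect terms. The only cosmetic point is that your anticipated case split on $s\le r$ versus $r<s$ collapses, since the joint event already forces $X_i\le X_j$; otherwise the plan matches the paper's computation step for step.
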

For the proof of the theorem and its following corollary, see the Appendix. 

\begin{corollary} \label{cor:aRTkDescendantsVar}
If we choose the uniform distribution over $[a]$ we get
\begin{eqnarray*}  \label{eq:VarkDesART}
\Var(Y_{\geq k , n}^{a}) &=&\frac{1}{a^{k+1}}\sum_{s=1}^{a} s^k\left [(n-k-1)+ \frac{1}{a}\left(2nk -3k(k+1)\right) \right ] 
+2 \frac{1}{a^{k+1}} \sum_{s=2}^{a} \frac{1}{s-1} \sum_{r=1}^{a-s+1} r^k \\
&& \qquad \cdot \left [n-k-1  -(n-2k-1)\left ( \frac{a-s+1}{a}\right )^k - \frac{1- \left ( \frac{a-s+1}{a}\right )^{k}}{1-\frac{a-s+1}{a}}  \right ]\\
&& -  \left (\frac{1}{a^{k+1}}\sum_{s=1}^{a} s^k \right )^2 \left [ n(2k+1) - (3k+1)(k+1)\right ],
\end{eqnarray*}
and for fixed $a$ we moreover have
\begin{equation*}
\lim_{n \to \infty} \frac{\Var(Y_{\geq k , n}^{a})}{n} =
\frac{1}{a^{k+1}}\sum_{s=1}^{a} s^k\left [1 + \frac{2k}{a} - \frac{2k+1}{a^{k+1}}\sum_{s=1}^{a} s^k \right ] 
 + \frac{2}{a^{k+1}} \sum_{s=1}^{a-1} \frac{1}{s} \left [1  - \left ( \frac{a-s}{a}\right )^k  \right ] \sum_{r=1}^{a-s+1} r^k .
\end{equation*}
If we fix $n$ we get 
\begin{equation*}
\begin{split}
\Var(Y_{\geq k , n}^{a}) \xrightarrow{a \to \infty}&  \frac{n-k-1}{k+1}-  2 \frac{(n-k-1)}{k+1} H_{k+1}  + 2 \frac{n-2k-1}{k+1} H_{2k+1} \\
& +  \frac{2}{k+1} \sum_{\ell=0}^{k-1} H_{k+\ell+1} - \frac{ n(2k+1) - (3k+1)(k+1)}{(k+1)^2},
\end{split}
\end{equation*}
and in particular 
\begin{equation*}
\lim_{n\to \infty} \lim_{a \to \infty} \frac{\Var(Y_{\geq k , n}^{a})}{n} =   \frac{1}{k+1} + \frac{2H_{2k+1}-2H_{k+1}}{k+1} - \frac{2k+1}{(k+1)^2}.
\end{equation*}
\end{corollary}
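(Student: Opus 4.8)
The plan is to specialize Theorem~\ref{thm:kDesVarGeneral} to the uniform distribution $p_s=1/a$ and then treat the two asymptotic regimes separately. With $p_s=1/a$ one has $\sum_{r=s}^{a}p_r=(a-s+1)/a$, and after the substitution $s\mapsto a-s+1$ this gives $\sum_{s=1}^{a}p_s\bigl(\sum_{r=s}^{a}p_r\bigr)^k=a^{-(k+1)}\sum_{s=1}^{a}s^k$, $\sum_{r=s}^{a}p_r\bigl(\sum_{t=r}^{a}p_t\bigr)^k=a^{-(k+1)}\sum_{r=1}^{a-s+1}r^k$, and $\sum_{u=1}^{s-1}p_u=(s-1)/a$. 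Inserting these into the exact variance formula of Theorem~\ref{thm:kDesVarGeneral} and collecting terms produces the first display of the corollary; inserting them into the $n\to\infty$ statement of that theorem produces the second display. Both steps are purely mechanical substitutions.

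The substantive part is the limit $a\to\infty$ with $n$ fixed. I would use the elementary estimate $\sum_{s=1}^{a}s^k=\frac{a^{k+1}}{k+1}+O(a^k)$ (already used in the proof of Theorem~\ref{thm:BRTExpkDesce}), so that $a^{-(k+1)}\sum_{s=1}^{a}s^k\to\frac1{k+1}$. This immediately handles the first term of the exact formula, which tends to $\frac{n-k-1}{k+1}$ (the $\frac1a(2nk-3k(k+1))$ correction vanishing), and the last term, which tends to $-\frac{n(2k+1)-(3k+1)(k+1)}{(k+1)^2}$. The delicate piece is the middle double sum. Reindexing by $b=a-s+1$ and writing $u=b/a$ together with
\[
B(u):=(n-k-1)-(n-2k-1)u^k-\frac{1-u^k}{1-u},
\]
it becomes $\frac{2}{a^{k+1}}\sum_{b=1}^{a-1}\frac{1}{a-b}\bigl(\sum_{r=1}^{b}r^k\bigr)B(b/a)$. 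Replacing $\sum_{r=1}^{b}r^k$ by $\frac{b^{k+1}}{k+1}$---the error is $O(b^k)$ and contributes only $O(1/a)$ once one notes $B(u)=O(1-u)$ on $[0,1]$---reduces this to $\frac{2}{(k+1)a}\sum_{b=1}^{a-1}g(b/a)$, where $g(u)=\frac{u^{k+1}}{1-u}B(u)$. The crucial observation is that $B$ vanishes at $u=1$, so $B(u)/(1-u)$, and hence $g$, is an honest polynomial; therefore $\frac1a\sum_{b=1}^{a-1}g(b/a)\to\int_0^1 g(u)\,du$ with no Euler--Maclaurin corrections needed.

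It then remains to compute $\frac{2}{k+1}\int_0^1 g(u)\,du$. Using $1-u^j=(1-u)\sum_{i=0}^{j-1}u^i$ one verifies $\frac{B(u)}{1-u}=(n-2k-1)\sum_{j=0}^{k-1}u^j+\sum_{j=1}^{k-1}\sum_{i=0}^{j-1}u^i$, so the integral is a linear combination of the values $\int_0^1 u^{k+j+1}\,du=\frac1{k+j+2}$. Since $\sum_{j=0}^{k-1}\frac1{k+j+2}=H_{2k+1}-H_{k+1}$ and the nested double sum collapses similarly, a short rearrangement gives the middle contribution as $-2\frac{n-k-1}{k+1}H_{k+1}+2\frac{n-2k-1}{k+1}H_{2k+1}+\frac{2}{k+1}\sum_{\ell=0}^{k-1}H_{k+\ell+1}$; adding the three contributions gives the third display. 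The final iterated-limit statement then needs no new computation: the third display is affine in $n$, and since $1/n$ does not involve $a$ we obtain $\lim_{n\to\infty}\lim_{a\to\infty}\frac{\Var(Y_{\geq k,n}^{a})}{n}$ as its coefficient of $n$, namely $\frac1{k+1}+\frac{2(H_{2k+1}-H_{k+1})}{k+1}-\frac{2k+1}{(k+1)^2}$. (If only this last equality is wanted, one can shortcut by tracking only the $n$-proportional part $n(1-u^k)$ of $B$, whose integral against $\frac{u^{k+1}}{1-u}$ is $\frac{2}{k+1}(H_{2k+1}-H_{k+1})$.)

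The main obstacle is the middle double sum: one must replace the partial power sum $\sum_{r=1}^{b}r^k$ by its leading term uniformly enough to keep the accumulated error $o(1)$, which works precisely because the zero of $B$ at $u=1$ cancels the apparent $1/(1-u)$ singularity, and then recognize the reduced expression as a Riemann sum for a polynomial integrand so the limit is a clean integral. Everything beyond that is harmonic-number bookkeeping.
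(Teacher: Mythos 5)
Your proposal is correct. For the exact formula and the $n\to\infty$ limit it coincides with the paper's own proof in Appendix~B: both simply substitute $p_s=1/a$ into Theorem~\ref{thm:kDesVarGeneral} and reindex via $s\mapsto a-s+1$ using $\sum_{r=s}^{a}p_r=(a-s+1)/a$, which is purely mechanical. Where you genuinely add something is the $a\to\infty$ regime: the paper's appendix stops after the $n\to\infty$ display and does not prove the last two limits (they are deferred to the thesis \cite{ellatez}), whereas you give a complete argument. Your treatment of the middle double sum is the right one and I verified it: writing it as
\[
\frac{2}{a^{k+1}}\sum_{b=1}^{a-1}\frac{1}{a-b}\Bigl(\sum_{r=1}^{b}r^k\Bigr)B(b/a),\qquad B(u)=(n-k-1)-(n-2k-1)u^k-\frac{1-u^k}{1-u},
\]
the key facts are that $B(1)=(n-k-1)-(n-2k-1)-k=0$, so $B(u)/(1-u)=(n-2k-1)\sum_{j=0}^{k-1}u^j+\sum_{i=1}^{k-1}\sum_{j=0}^{i-1}u^j$ is a polynomial, and that the bound $|B(u)|\le C(1-u)$ makes the $O(b^k)$ error from $\sum_{r=1}^{b}r^k=\frac{b^{k+1}}{k+1}+O(b^k)$ contribute only $O(1/a)$. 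The resulting Riemann sum converges to $\frac{2}{k+1}\int_0^1 u^{k+1}\frac{B(u)}{1-u}\,du$, and your harmonic-number bookkeeping checks out: the coefficient of $H_{k+1}$ in your integral evaluation, $-\frac{2(n-2k-1)}{k+1}-\frac{2(k-1)}{k+1}=-\frac{2(n-k-2)}{k+1}$, agrees with the corollary's $-\frac{2(n-k-1)}{k+1}+\frac{2}{k+1}$ once the $\ell=0$ term of $\sum_{\ell=0}^{k-1}H_{k+\ell+1}$ is separated. The iterated-limit statement then follows by reading off the coefficient of $n$, exactly as you say. In short: same route as the paper where the paper gives one, and a self-contained substitute for the part the paper outsources.
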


The next result shows that $Y_{\geq k,n}$ satisfies a central limit theorem for fixed  $k$.

\begin{theorem}
Let  $Y_{\geq k, n}^{p}$ denote the number of nodes with at least $k$ descendants in a $p$-BRT. Then 
\begin{equation*}
d_W\left  (Y_{\geq k, n}^{p}, \mathcal{G} \right )  \leq  \frac{2k+1}{\sigma} \left ( (2k+1) + \frac{\sqrt{28}(2k+1)^{\frac{1}{2}}}{\sqrt{\pi}} \right ),
\end{equation*}
where $d_W$ is the Wasserstein distance between probability measures, and $\mathcal{G}$ is a standard normal random variable. 
\end{theorem}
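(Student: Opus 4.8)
The plan is to exhibit $Y_{\geq k,n}^{p}$ as a sum of bounded indicators whose dependence is confined to a window of width $k$, and then to feed this into the dependency--neighbourhood form of Stein's method for normal approximation in the Wasserstein metric; the factor $\sqrt{28}/\sqrt{\pi}$ in the statement is exactly the one that version of the bound produces. Throughout, $\mathcal{G}$ denotes a standard normal variable and $\sigma^{2}=\Var(Y_{\geq k,n}^{p})$.

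First I would re-use the indicator representation from the proof of Theorem~\ref{thm:BRTExpkDesce}: $Y_{\geq k,n}^{p}=_d\sum_{i=1}^{n-k}C_i^{k}$, where $C_i^{k}=\1(X_i\le\{X_{i+1},\dots,X_{i+k}\})$ and the $X_j$ are i.i.d.\ with law $p$ on $[a]$. The decisive structural fact is that $C_i^{k}$ depends only on the block $(X_i,\dots,X_{i+k})$, so $C_i^{k}$ and $C_j^{k}$ are independent whenever $|i-j|>k$; thus $(C_i^{k})_i$ is a $k$--dependent family, and one may take the dependency neighbourhood of index $i$ to be $N_i=\{\,j:|i-j|\le k\,\}$, which has $|N_i|\le 2k+1$. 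Writing $\xi_i=C_i^{k}-\E[C_i^{k}]$, $D=2k+1$ and $W=\sigma^{-1}\bigl(Y_{\geq k,n}^{p}-\E[Y_{\geq k,n}^{p}]\bigr)$, the local--dependence Stein bound for the Wasserstein distance reads
\[
d_W(W,\mathcal{G})\ \le\ \frac{D^{2}}{\sigma^{3}}\sum_{i=1}^{n-k}\E|\xi_i|^{3}\ +\ \frac{\sqrt{28}}{\sqrt{\pi}}\,\frac{D^{3/2}}{\sigma^{2}}\left(\sum_{i=1}^{n-k}\E[\xi_i^{4}]\right)^{1/2}.
\]

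It then remains to control the moment sums. Since $0\le C_i^{k}\le 1$ we have $|\xi_i|\le 1$, hence $\E|\xi_i|^{3}\le\E[\xi_i^{2}]=\Var(C_i^{k})$ and $\E[\xi_i^{4}]\le\Var(C_i^{k})$, so both sums are at most $\sum_{i=1}^{n-k}\Var(C_i^{k})$. I would then compare this quantity with $\sigma^{2}$: by the $k$--dependence, $\sigma^{2}$ is obtained from the individual variances $\Var(C_i^{k})$ by adding $O(k)$ covariance terms per index, and the covariance bookkeeping already carried out for Theorem~\ref{thm:kDesVarGeneral} lets one bound $\sum_i\Var(C_i^{k})$ in terms of $\sigma^{2}$ with explicit constants. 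Substituting these estimates into the displayed inequality and collecting the powers of $D=2k+1$ gives
\[
d_W\bigl(Y_{\geq k,n}^{p},\mathcal{G}\bigr)\ \le\ \frac{2k+1}{\sigma}\left((2k+1)+\frac{\sqrt{28}\,(2k+1)^{1/2}}{\sqrt{\pi}}\right),
\]
which is the claim.

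The step I expect to be the real obstacle is the last one: converting the generic Stein bound, whose right--hand side still carries the index $n$ through the moment sums, into the clean $n$--free expression above. This needs a careful comparison of $\sum_i\Var(C_i^{k})$ (and of the third-- and fourth--moment sums) with $\Var(Y_{\geq k,n}^{p})$, i.e.\ exactly the sort of lengthy covariance manipulation underlying Theorem~\ref{thm:kDesVarGeneral}; one also needs the qualitative consequence of that theorem that $\sigma^{2}$ is of exact order $n$ with a strictly positive coefficient, so that the bound is non--vacuous and $d_W(W,\mathcal{G})=O(n^{-1/2})$ as $n\to\infty$ for each fixed $k$. These computations are deferred to \cite{ellatez}.
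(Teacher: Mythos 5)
Your proposal follows essentially the same route as the paper: the same indicator decomposition $Y_{\geq k,n}^{p}=_d\sum_i C_i^{k}$, the same dependency neighbourhoods of size $D=2k+1$, the same local-dependence Wasserstein bound (Theorem 3.6 of \cite{Ross}), and the same moment estimates $\E|\xi_i|^{3},\ \E[\xi_i^{4}]\le\Var(C_i^{k})$.

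The one step you single out as ``the real obstacle''---relating $\sum_i\Var(C_i^{k})$ to $\sigma^{2}$---is exactly the step the paper passes over: in its final display it replaces $\sum_i\Var(C_i^{k})$ by $\sigma^{2}$, i.e.\ it uses $\sum_i\Var(C_i^{k})\le\Var\bigl(\sum_i C_i^{k}\bigr)$ without comment. Your caution is warranted, because that inequality requires the aggregate covariance to be nonnegative, whereas here the covariances are negative (conditioning on $X_i\le X_j$ pushes $X_j$ upward and makes $C_j^{k}$ less likely). Concretely, for $k=1$ and the uniform distribution with $a$ large one has $\sum_i\Var(C_i^{1})\sim n/4$, while Corollary \ref{cor:aRTkDescendantsVar} gives $\sigma^{2}\sim n/12$, so the asserted inequality fails and the stated constant does not follow from the displayed chain. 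The substance of the theorem survives: $\sum_i\Var(C_i^{k})\le n$ trivially and Theorem \ref{thm:kDesVarGeneral} shows $\sigma^{2}\asymp n$ with a positive leading coefficient, so $d_W=O(\sigma^{-1})=O(n^{-1/2})$ for fixed $k$; but to get a clean $n$-free constant one must carry the ratio $\sum_i\Var(C_i^{k})/\sigma^{2}$ (and its square root) through the bound, which yields an explicit $k$- and $p$-dependent constant rather than $1$. So your plan is the right one, and the covariance comparison you defer is genuinely needed---neither your sketch nor the paper's proof actually supplies it.
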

\begin{proof}
We will  use Theorem   3.6 of \cite{Ross}: Let $Y_1, Y_2, \dots, Y_n$ be random variables for which $\E[Y_i^4] < \infty$ and $\E[X_i]=0$ holds. Let moreover $N_i$ be the dependency neighbourhoods of $(Y_1, \dots, Y_n)$ and define $D:= \max_{1\leq i \leq n} \{|N_i|\}$. Finally set $\sigma^2 = \Var\left ( \sum_{i=1}^n Y_i\right )$ and define $W:= \sum_{i=1}^n \frac{Y_i}{\sigma}$. Then
\[d_W(W,\mathcal{G}) \leq \frac{D^2}{\sigma^3} \sum_{i=1}^{n} \E[|Y_i|^3] + \frac{\sqrt{28}D^{\frac{3}{2}}}{\sqrt{\pi}\sigma^2} \sqrt{\sum_{i=1}^n \E[Y_i^4]}. \] Here,  For each $i$, we call $N_i$ the \emph{dependency neighbourhood} of $Y_i$, if $Y_i$ is independent of $\{Y_j\}_{j \notin N_i}$ and $i \in N_i$.

Now, going back to our problem,    we set $Y_i := C_i^{k}-\E[C_i^{k}]$ for $i=1, \dots, n-k$. The dependency neighbourhoods are $N_i = \{C_j^k: i-k \leq j \leq i+k\}$ for $i\leq n-2k$ and $N_i = \{C_j^k: i-k \leq j \leq n-k\}$ for $i= n-2k+1, \dots, n-k$. Hence we have $D= 2k+1$. Let $\sigma^2 = \Var \left ( \sum_{i=1}^n C_i^{k} \right )$ and define $W:= \sum_{i=1}^n \frac{C_i^{k}-\E[C_i^{k}]}{\left (\Var(\sum_{i=1}^n C_i^{k} \right )^{\frac{1}{2}}}$.

We now need to estimate $\sum_{i=1}^{n} \E[|Y_i|^3]$ and $\sum_{i=1}^n \E[Y_i^4]$.
Since the $Y_i$ can take values $1-p_i$ or $-p_i$ where $p_i = \E[C_i^{k}]$, we have  
\begin{equation*}
\E[|Y_i|^3] = |1-p_i|^3 p_i + |-p_i|^3(1-p_i) 
= p_i(1-p_i) ((1-p_i)^2 + p_i^2) 
= p_i(1-p_i) (1-2p_i(1- p_i)) 
\leq \Var[C_i^{k}]
\end{equation*}
since for $0<a<1$, we have $0<a(1-a)<\frac{1}{4}$ and thus $1>1-2p_i(1- p_i)>\frac{1}{2}$.
Similarly 
\begin{equation*}
\E[|Y_i|^3]  = (1-p_i)^4 p_i + p_i^4(1-p_i) 
= p_i(1-p_i) ((1-p_i)^3 + p_i^3) 
= p_i(1-p_i) (1-3(pi(1-p_i)) 
\leq \Var(C_i^{k}).
\end{equation*}
since for $0<a<1$, we have $1>1-3p_i(1- p_i) > \frac{1}{4}$.

Therefore,  we obtain
\begin{equation*}
\begin{split}
d_W(W,\mathcal{G}) & \leq \frac{(2k+1)^2}{\sigma^3} \sum_{i=1}^{n-k} \Var(C_i^k)+ \frac{\sqrt{28}(2k+1)^{\frac{3}{2}}}{\sqrt{\pi}\sigma^2} \sqrt{\sum_{i=1}^n \Var(C_i^k)} \\
&  \leq \frac{2k+1}{\sigma} \left ( (2k+1) + \frac{\sqrt{28}(2k+1)^{\frac{1}{2}}}{\sqrt{\pi}} \right ).
\end{split}
\end{equation*}
As we know by Theorem \ref{thm:kDesVarGeneral} that the variance is of order $n$, this bound decreases with order $\mathcal{O}\left ( \frac{1}{\sqrt{n}} \right ).$
\hfill $\square$
\end{proof}

Note that since $Y_{\leq k, n}^{p}$, the number of nodes with at most $k$ descendants, is equal to $n- Y_{\geq k+1, n}^{p}$,  the expectation, variance and CLT of $Y_{\leq k, n}^{p}$ follows directly from the above results.

\begin{remark}
When $k$ grows moderately with $n$, it is highly likely that  the number of nodes with $k$ descendants will tend to the Poisson distribution since the event that a given node is has at least $k$ leaves becomes a rare event. We do not go into details of this here, noting  that one may obtain convergence via standard Poisson approximation results under local dependence. 
\end{remark}

Our results on $Y_{\geq k,n}^{\cdot}$ can also be used to study the   number of nodes with exactly $k$ descendants.

\begin{corollary} \label{cor:ExactKDesBRTExpect}
Let $X_{k,n}^{p}$ denote the number of nodes with exactly $k$ descendants in a $p$-BRT. Then
\begin{equation*}
\E[X_{k,n}^{p}] = (n-k-1)\sum_{s=1}^{a} p_s \left ( \left (\sum_{r=s}^{a} p_r  \right )^k - \left (\sum_{r=s}^{a} p_r  \right )^{k+1} \right ) + \sum_{s=1}^{a} p_s \left (\sum_{r=s}^{a} p_r  \right )^{k+1}.
\end{equation*}
Moreover if $X_{k,n}^{a}$ is the number of nodes with exactly $k$ descendants in a $a$-RT, then
\begin{equation*}
 \E[X_{k,n}^{a}] = (n-k-1) \left (  \frac{1}{a^{k+1}} \sum_{s=1}^{a} s^k -  \frac{1}{a^{k+2}} \sum_{s=1}^{a} s^{k+1}\right )  + \frac{1}{a^{k+2}} \sum_{s=1}^{a} s^{k+1}, 
\end{equation*}
and asymptotically we have 
\begin{equation*} 
\E[X_{k,n}^{a}] \xrightarrow{a \to \infty} \frac{n}{(k+1)(k+2)}.
\end{equation*}
\end{corollary}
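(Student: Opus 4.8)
The plan is to reduce everything to Theorem \ref{thm:BRTExpkDesce} via the pathwise identity
\begin{equation*}
X_{k,n}^{p} = Y_{\geq k, n}^{p} - Y_{\geq k+1, n}^{p},
\end{equation*}
which simply expresses that a node has exactly $k$ descendants iff it has at least $k$ but not at least $k+1$. Since in the inverse-shuffle representation these statistics are the sums $\sum_i C_i^k$ and $\sum_i C_i^{k+1}$ of indicators built from the \emph{same} digit sequence, and $C_i^{k+1} \le C_i^k$ pointwise, the identity is legitimate term by term. Taking expectations and applying Theorem \ref{thm:BRTExpkDesce} twice (once at $k$, once at $k+1$, so that the prefactor becomes $n-(k+1)-1 = n-k-2$) gives
\begin{equation*}
\E[X_{k,n}^{p}] = \Big( (n-k-1)S_k + 1 \Big) - \Big( (n-k-2)S_{k+1} + 1 \Big),
\qquad S_j := \sum_{s=1}^{a} p_s \Big(\sum_{r=s}^{a} p_r\Big)^j .
\end{equation*}
The two constants cancel, and writing $n-k-2 = (n-k-1)-1$ regroups this as $(n-k-1)(S_k - S_{k+1}) + S_{k+1}$, which is precisely the claimed formula for $\E[X_{k,n}^{p}]$.

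Next I would specialize to the $a$-RT by setting $p_s = 1/a$, so that $\sum_{r=s}^{a} p_r = (a-s+1)/a$ and hence
\begin{equation*}
S_j = \frac{1}{a}\sum_{s=1}^{a}\Big(\frac{a-s+1}{a}\Big)^j = \frac{1}{a^{j+1}}\sum_{s=1}^{a}(a-s+1)^j = \frac{1}{a^{j+1}}\sum_{t=1}^{a} t^j
\end{equation*}
after the substitution $t = a-s+1$. Inserting this for $j=k$ and $j=k+1$ into the general formula yields the stated expression for $\E[X_{k,n}^{a}]$.

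Finally, for the limit $a \to \infty$ with $n$ and $k$ fixed, I would invoke the elementary estimate $\sum_{s=1}^{a} s^j = \frac{a^{j+1}}{j+1} + \mathcal{O}(a^j)$ (already used in the proof of Theorem \ref{thm:BRTExpkDesce}), which gives $\frac{1}{a^{k+1}}\sum_s s^k \to \frac{1}{k+1}$ and $\frac{1}{a^{k+2}}\sum_s s^{k+1} \to \frac{1}{k+2}$. Therefore
\begin{equation*}
\E[X_{k,n}^{a}] \xrightarrow{a \to \infty} (n-k-1)\Big(\frac{1}{k+1} - \frac{1}{k+2}\Big) + \frac{1}{k+2} = \frac{n-k-1}{(k+1)(k+2)} + \frac{k+1}{(k+1)(k+2)} = \frac{n}{(k+1)(k+2)}.
\end{equation*}
There is no real obstacle here: the corollary is a short deduction, and the only points needing care are the index shift $k \mapsto k+1$ in the prefactor of Theorem \ref{thm:BRTExpkDesce} (so that the two $+1$'s cancel), the reindexing $t = a-s+1$ in the uniform case, and the final telescoping of fractions — all of which are routine.
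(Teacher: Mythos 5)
Your proposal is correct and follows exactly the paper's own route: the identity $X_{k,n}^{p} = Y_{\geq k, n}^{p} - Y_{\geq k+1, n}^{p}$, two applications of Theorem \ref{thm:BRTExpkDesce} with the cancelling $+1$'s, the specialization $p_s = 1/a$ with the reindexing $t = a-s+1$, and the asymptotic estimate $\sum_{s=1}^{a} s^j = \frac{a^{j+1}}{j+1} + \mathcal{O}(a^j)$. No gaps.
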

\begin{proof}
We have $X_{k,n}^{p} = Y_{\geq k, n}^{p} - Y_{\geq k+1, n}^{p}$ thus   Theorem \ref{thm:BRTExpkDesce} yields
\begin{equation*}
\begin{split}
& \E \left [X_{k,n}^{p} \right ] = \E\left [Y_{\geq k, n}^{p} \right]- \E \left [Y_{\geq k+1, n}^{p} \right ] \\
&= (n-k-1)\sum_{s=1}^{a} p_s \left (\sum_{r=s}^{a} p_r  \right )^k +1 - (n-k-2)\sum_{s=1}^{a} p_s \left (\sum_{r=s}^{a} p_r  \right )^{k+1} - 1 \\
&= (n-k-1)\sum_{s=1}^{a} p_s \left ( \left (\sum_{r=s}^{a} p_r  \right )^k - \left (\sum_{r=s}^{a} p_r  \right )^{k+1} \right ) + \sum_{s=1}^{a} p_s \left (\sum_{r=s}^{a} p_r  \right )^{k+1}.  \\
\end{split}
\end{equation*}
Similarly, when $p$  is the uniform uniform distribution,   Theorem \ref{thm:BRTExpkDesce}  gives 
\begin{equation*}
\E[X_{k,n}^{a}] = (n-k-1) \left (  \frac{1}{a^{k+1}} \sum_{s=1}^{a} s^k -  \frac{1}{a^{k+2}} \sum_{s=1}^{a} s^{k+1}\right )  + \frac{1}{a^{k+2}} \sum_{s=1}^{a} s^{k+1}. \\
\end{equation*}
This implies in particular that
\begin{eqnarray*}
\E[X_{k,n}^{a}]
&=& (n-k-1) \left (  \frac{1}{a^{k+1}} \left [ \frac{a^{k+1}}{k+1} + \mathcal{O}(a^k) \right ] -  \frac{1}{a^{k+2}} \left [ \frac{a^{k+2}}{k+2} + \mathcal{O}(a^{k+1}) \right ] \right )  \\
&&+ \frac{1}{a^{k+2}} \left [ \frac{a^{k+2}}{k+2} + \mathcal{O}(a^{k+1}) \right ]\\
&\xrightarrow{a \to \infty}&  (n-k-1) \left ( \frac{1}{k+1} -  \frac{1}{k+2} \right ) +  \frac{1}{k+2} 
= \frac{n}{(k+1)(k+2)}.
\end{eqnarray*}
\hfill $\square$
\end{proof}
\begin{corollary}
Let $X_{k,n}^{a}$ denote the number of nodes with exactly $k$ descendants in an $a$-RT, then
\begin{eqnarray}\label{eqn:varxkna}
\nonumber\Var(X_{k,n}^{a} ) &=& 
\frac{1}{a^{k+1}}\sum_{s=1}^{a} s^k \left [(n-k-1) - \frac{2}{a} \left( n-k-1\right ) \right ]  +\frac{2}{a^{k+1}} \sum_{s=2}^{a}  \frac{1}{s-1} \sum_{r=1}^{a-(s-1)} r^k  \\
\nonumber &&\cdot \left [- \frac{n-2k-1}{a^k} \left (a-(s-1)\right )^k  - \frac{1}{a^{k-1}} \frac{a^{k}- a^{k-1}(a-(s-1)) - \left (a-(s-1)\right )^{k}}{s-1} \right ]\\
\nonumber && - \frac{1}{a^{2k+2}} \left ( \sum_{s=1}^{a} s^k \right )^2 \left [ n(2k+1) - (3k+1)(k+1)\right ]   \\
\nonumber && + \frac{1}{a^{k+1}} \sum_{s=1}^{a} s^{k+1} \left [-n+3k+ \frac{1}{a} \left [ 2n-7k-6\right ] \right ] 
 +\frac{2}{a^{k+2}} \sum_{s=2}^{a} \frac{1}{s-1} \sum_{r=1}^{a-(s-1)} r^{k+1} \\
\nonumber && \cdot \left [-\frac{n-2k-3}{a^{k+1}}\left (a-(s-1)\right )^{k+1} - \frac{1}{a^{k}} \frac{a^{k+1}- a^{k}(a-(s-1)) - (a-(s-1) )^{k+1}}{s-1} \right ]\\
&& -  \frac{1}{a^{2k+4}} \left ( \sum_{s=1}^{a} s^{k+1} \right )^2 \left [ n(2k+3) - (3k+4)(k+2)\right ] \\
\nonumber &&+  \frac{2}{a^{2k+2}} \sum_{s=2}^{a} \frac{1}{s-1} \sum_{r=1}^{a-(s-1)}  r^{k+1} \left ( a-(s-1) \right )^k \left [ n-2k-1 -\frac{a}{s-1} \right ] \\
\nonumber &&+ \frac{2}{a^{2k+2}} \sum_{s=2}^{a} \frac{1}{s-1} \sum_{r=1}^{a-(s-1)} r^{k} \left (a-(s-1)\right )^{k+1} \left [ n-2k-1-  \frac{a}{ s-1} \right ]\\
\nonumber && +\frac{2}{a^{2k+3}} \sum_{s=1}^{a} s^k \sum_{r=1}^{a} r^{k+1} \left [ 2(n-2k-1)(k+1) + k^2 \right].
\end{eqnarray}
\end{corollary}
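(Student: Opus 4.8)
The plan is to reuse the decomposition $X_{k,n}^{a}=Y_{\geq k,n}^{a}-Y_{\geq k+1,n}^{a}$ from Corollary~\ref{cor:ExactKDesBRTExpect}, but now coupling the two counts through the \emph{same} digit sequence $X_{2},\dots,X_{n}$ of the inverse $a$-shuffle, so that the joint law is well defined. Then
\[
\Var(X_{k,n}^{a})=\Var(Y_{\geq k,n}^{a})+\Var(Y_{\geq k+1,n}^{a})-2\,\Cov\!\left(Y_{\geq k,n}^{a},Y_{\geq k+1,n}^{a}\right),
\]
and the two variance terms are handed to us directly by Corollary~\ref{cor:aRTkDescendantsVar}, once with parameter $k$ and once with parameter $k+1$. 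Hence the only new object to compute is the covariance, and I would obtain it from the indicator representations $Y_{\geq k,n}^{a}=_{d}\sum_{i=1}^{n-k}C_i^{k}$, $Y_{\geq k+1,n}^{a}=_{d}\sum_{j=1}^{n-k-1}C_j^{k+1}$, where $C_i^{m}=\1(X_i\le\{X_{i+1},\dots,X_{i+m}\})$, via
\[
\Cov\!\left(Y_{\geq k,n}^{a},Y_{\geq k+1,n}^{a}\right)=\sum_{i=1}^{n-k}\sum_{j=1}^{n-k-1}\Big(\E[C_i^{k}C_j^{k+1}]-\E[C_i^{k}]\,\E[C_j^{k+1}]\Big),
\]
the marginal expectations already being known from Theorem~\ref{thm:BRTExpkDesce}.

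The core computation is $\E[C_i^{k}C_j^{k+1}]$, handled by the same window–overlap case analysis as in the proof of Theorem~\ref{thm:kDesVarGeneral}. If the index blocks $\{i,\dots,i+k\}$ and $\{j,\dots,j+k+1\}$ are disjoint, i.e. $j\ge i+k+1$ or $i\ge j+k+2$, then $C_i^{k}$ and $C_j^{k+1}$ are independent and contribute nothing to the covariance. If $i=j$, then $C_i^{k}C_i^{k+1}=C_i^{k+1}$ (using $C_i^{k+1}=C_i^{k}\,\1(X_i\le X_{i+k+1})\le C_i^{k}$), so the term is $\E[C_i^{k+1}]$. The two remaining overlapping regimes are $i<j\le i+k$, in which the critical index $j$ of $C_j^{k+1}$ lies among the ``large'' slots of $C_i^{k}$, and $j<i\le j+k+1$, in which the critical index $i$ of $C_i^{k}$ lies among the ``large'' slots of $C_j^{k+1}$. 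In each regime I would condition on $X_i=s$ and $X_j=r$, note that the merged constraints force certain coordinates to exceed $\min(s,r)$ and others to exceed $\max(s,r)$, and use independence of the $X$'s and $\Pro(s\le X_m)=\sum_{u=s}^{a}\tfrac1a$ to write the conditional probability as a product of tail sums raised to powers equal to the relevant gap lengths; summing the resulting geometric series in $j-i$ (resp.\ $i-j$) and in $i$ gives the covariance in closed form, after replacing $p_s$ by $1/a$ and expanding power sums by $\sum_{s=1}^{a}s^{m}=\tfrac{a^{m+1}}{m+1}+\mathcal O(a^{m})$ only where an asymptotic conclusion is needed. (If one wanted the general $p$-BRT statement, the same steps give $\Cov(Y_{\geq k,n}^{p},Y_{\geq k+1,n}^{p})$ with $\sum_{u=s}^{a}p_u$ in place of $\tfrac{a-s+1}{a}$, and one would combine it with Theorem~\ref{thm:kDesVarGeneral}.)

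Finally I would substitute the two instances of Corollary~\ref{cor:aRTkDescendantsVar} and the covariance expression into the displayed decomposition and collect terms, grouping the contributions into the $s^{k}$-sums, the $s^{k+1}$-sums, the cross sums $\sum s^{k}\sum r^{k+1}$, and the nested partial sums $\tfrac{1}{s-1}\sum_{r} r^{m}$; matching these against the analogous pieces inside the two variance formulas so that the redundant terms cancel produces exactly the stated formula. The main obstacle is entirely bookkeeping: there are several overlapping-window subcases, each with slightly different boundary behaviour near $j=n-k-1$ and near the diagonal $i=j$, and each spawns several geometric sums with denominators such as $1-\tfrac{a-s}{a}\tfrac{a-r}{a}$ that must be tracked precisely and then reconciled with the $Y_{\geq k,n}^{a}$ and $Y_{\geq k+1,n}^{a}$ contributions. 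As with the other variance computations in the paper, I would present only these organizing identities and relegate the full simplification to the appendix and the first author's thesis.
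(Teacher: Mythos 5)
Your proposal is correct and follows exactly the route the paper intends: the stated formula is visibly organized as $\Var(Y_{\geq k,n}^{a})+\Var(Y_{\geq k+1,n}^{a})-2\Cov(Y_{\geq k,n}^{a},Y_{\geq k+1,n}^{a})$, with the first two blocks coming from Corollary \ref{cor:aRTkDescendantsVar} at parameters $k$ and $k+1$ and the final mixed $s^{k}$--$r^{k+1}$ sums coming from the covariance of the coupled indicator sums, which is precisely the window-overlap computation you describe (the paper itself omits the details and defers them to \cite{ellatez}). Your case analysis for $\E[C_i^{k}C_j^{k+1}]$ is complete and the conditioning argument mirrors the appendix computation for $\E[C_i^{k}C_j^{k}]$, so the only remaining work is the bookkeeping you acknowledge.
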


As a final result in this section we note the following strong law. 
\begin{theorem}
We have $$\frac{X_{k,n}^{a}}{n} \longrightarrow \frac{1}{a^{k+1}} \sum_{s=1}^a s^k - \frac{1}{a^{k+2}} \sum_{s=1}^a s^{k+1} \qquad a.s., $$ as $n \rightarrow \infty$. 
\end{theorem}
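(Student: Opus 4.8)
The plan is to prove the strong law via the representation $X_{k,n}^{a} =_d \sum_{i=1}^{n-k} \left(C_i^k - C_i^{k+1}\right)$ with the understanding that for each $i$ the pair $(C_i^k, C_i^{k+1})$ is a deterministic function of the i.i.d.\ digits $X_i, X_{i+1}, \dots, X_{i+k+1}$, so that the summands $D_i := C_i^k - C_i^{k+1}$ form a $(k+2)$-dependent stationary sequence (apart from finitely many boundary terms). First I would recall from Corollary \ref{cor:ExactKDesBRTExpect} that $\frac{1}{n}\E[X_{k,n}^{a}] \to \frac{1}{a^{k+1}}\sum_{s=1}^a s^k - \frac{1}{a^{k+2}}\sum_{s=1}^a s^{k+1}$, so it suffices to show $\frac{1}{n}\left(X_{k,n}^{a} - \E[X_{k,n}^{a}]\right) \to 0$ almost surely; the target constant is exactly $\E[D_i]$ for any interior index $i$.

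The cleanest route is to split the index set $\{2, \dots, n-k\}$ into $k+2$ arithmetic-progression subsequences, say $I_m^{(n)} = \{ i : i \equiv m \pmod{k+2}\}$ for $m = 0, 1, \dots, k+1$. Within a fixed class the random variables $\{D_i : i \in I_m^{(n)}\}$ are i.i.d.\ and bounded (each $D_i \in \{-1,0,1\}$), so by the classical strong law of large numbers $\frac{1}{|I_m^{(n)}|}\sum_{i \in I_m^{(n)}} D_i \to \E[D_0]$ a.s. Since $|I_m^{(n)}| \sim \frac{n}{k+2}$ and there are only $k+2$ classes, summing and dividing by $n$ gives $\frac{1}{n}\sum_{i=2}^{n-k} D_i \to \E[D_0]$ a.s.; the finitely many boundary summands $C_i^k$ with $i > n-k$ (those with fewer than $k$ successors) and the index $i=1$ (which always contributes $1$) are $O(k)$ in total and hence vanish after dividing by $n$. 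Alternatively, one can invoke any off-the-shelf SLLN for stationary $m$-dependent sequences, or Etemadi's theorem, but the subsequence decomposition keeps everything self-contained.

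The only mild subtlety — and the step I would be most careful about — is the passage from the i.i.d.\ digit construction back to the tree statistic: one must make sure the equality in distribution $X_{k,n}^{a} =_d \sum C_i^k$ used in Theorem \ref{thm:BRTExpkDesce} is genuinely a statement about the \emph{whole sequence} $(X_{k,n})_n$, or else argue that almost sure convergence is a distributional property of the limit (it is not in general), so strictly speaking the clean formulation is: \emph{there is a coupling} on a single probability space carrying all the $X_j$ on which $X_{k,n}^{a}$ is realized simultaneously for all $n$ via the inverse-shuffle digits, and the a.s.\ statement is about that coupling. This is exactly the simultaneous construction alluded to in Section \ref{sec:connectionstoperms}, so no new idea is needed; one just has to state it. After that, the boundedness of the summands makes the moment hypotheses of the SLLN trivial, and the result follows. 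The analogous strong law for $Y_{\geq k,n}^{a}$ (and for the $p$-biased versions) follows by the identical argument, which is perhaps worth a one-line remark.
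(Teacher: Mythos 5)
Your argument is correct, but it takes a genuinely different route from the paper. The paper's proof is a concentration argument: it views $X_{k,n}^{a}$ as a function of the $n$ independent inverse-shuffle digits, notes that changing one digit alters the count by a bounded amount, applies the bounded differences (McDiarmid) inequality to get $\Pro\bigl(|X_{k,n}^{a}-\E[X_{k,n}^{a}]|>\sqrt{n\ln(n^{4k^2})}\bigr)\leq 2/n^2$, and then invokes Borel--Cantelli to conclude $X_{k,n}^{a}-\E[X_{k,n}^{a}]=o(n)$ a.s., finishing with the limit of $\E[X_{k,n}^{a}]/n$ from Corollary \ref{cor:ExactKDesBRTExpect}. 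You instead exploit the $(k+2)$-dependence of the summands $D_i=C_i^k-C_i^{k+1}$ directly, decomposing the index set into $k+2$ residue classes on each of which the summands are i.i.d.\ and bounded, and applying the classical SLLN classwise. Both proofs are valid; the paper's buys a quantitative almost-sure rate $O(\sqrt{n\log n})$ for the fluctuations, while yours is more elementary (no concentration inequality, only Kolmogorov's SLLN) and reuses the same local-dependence structure that drives the CLT in Section \ref{sec:descendants}. Your explicit remark that the a.s.\ statement only makes sense on a single probability space carrying the digits for all $n$ simultaneously is well taken --- the paper's Borel--Cantelli step quietly requires the same coupling. Two trivial slips: $D_i\in\{0,1\}$ rather than $\{-1,0,1\}$ since $C_i^{k+1}\leq C_i^k$, and the boundary discrepancy is the single index $i=n-k$ (where only $C_i^k$ is defined) plus the root term; neither affects anything after dividing by $n$.
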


\begin{proof}
First observe that we are able to express   $X_{k,n}^{a}$ as a function of $n$  independent random variables, and that changing only one of these will do a change of at most $2k$  in the value of  $\mathcal{L}_{k, n}$. Therefore we may use the bounded differences inequality (See, for example, Theorem 6.2 in \cite{BLM})to conclude that 
\begin{equation*}
\mathbb{P}(|X_{k,n}^{a}- \mathbb{E}[X_{k,n}^{a}]| > \sqrt{n} \sqrt{\ln (n^{4k^2})}) \leq 2 \exp \left(- 2 \frac{n \ln (n^{4k^2}) }{4 k^2  n} \right) 
 =  \frac{2}{n^2}.
\end{equation*}
Using Borel-Cantelli's first lemma we may then conclude that  $$|X_{k,n}^{a} - \mathbb{E}[X_{k,n}^{a}]|  = o(n)$$ a.s.. Therefore $\frac{X_{k,n}^{a} - \mathbb{E}[X_{k,n}^{a}]}{n} \rightarrow 0$ a.s., and result follows since $$\frac{\mathbb{E}[X_{k,n}^{a}]}{n} \longrightarrow \frac{1}{a^{k+1}} \sum_{s=1}^a s^k - \frac{1}{a^{k+2}} \sum_{s=1}^a s^{k+1}.$$

\hfill $\square$
\end{proof}
\section{Uniform recursive tree case}\label{sec:URT}

Our purpose in this section is to adapt our analysis from the previous section to uniformly recursive trees. Previously, Devroye  \cite{Devroye},  studied the number of nodes with exactly $k$ left descendants in a uniform binary recursive tree, and proved a central limit theorem for this case. He also notes that via a bijection argument, these results can also be interpreted in terms of URTs, see for instance \cite{Cormen} for a description of the left-child, right-sibling representation of rooted trees.

For  our purposes, we first note the following theorem whose proof follows the same lines as in  \cite{Altokislak}.

\begin{theorem}\label{thm:compare}
Let us denote the number of nodes with at least $k$ descendants in a URT, $p$-BRT and $a$-RT of order $n$ as $\mathcal{L}_{\geq k,n}$, $Y_{\geq k,n}^p$ and $Y_{\geq k,n}^{a}$ respectively. Then
\begin{enumerate}
\item for $n \geq 3$, \[d_{TV}(\mathcal{L}_{\geq k, n},Y_{\geq k,n}^p) \leq \binom{n-1}{2} \sum_{s=1}^{a} p_s^2,\]
\item for $a \geq n\geq 3$  and $p$ the uniform distribution this bound can be improved and we get 
\[d_{TV}(\mathcal{L}_{\geq k, n},Y_{\geq k,n}^a) \leq 1 - \frac{a!}{(a-n)!a^n}.\]
\item These two bounds imply that $Y_{\geq k,n}^a$ converges in distribution to $\mathcal{L}_n$ as $a \to \infty$ and that $Y_{\geq k,n}^p$ converges in distribution to $\mathcal{L}_{\geq k,n}$ as $a \to \infty$.
\item For a given $a$, among all distributions on $[a]$, the uniform distribution maximizes the expected number of nodes  with at least $k$ descendants. That is, if $p$ is any distribution on $[a]$, then $\E[Y_{\geq k,n}^p] \leq \E[Y_{\geq k,n}^a]$.
\item Same results also hold for the   number of nodes with exactly  $k$ descendants in a URT, $p$-BRT and $a$-RT of order $n$  which we denote by  $\mathcal{L}_{k,n}$, $Y_{k,n}^p$ and $Y_{k,n}^{a}$, respectively.
\end{enumerate}
\end{theorem}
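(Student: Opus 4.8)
The plan is to build all three trees on a common probability space by coupling the underlying permutations, and then to bound each statistic by the probability that the coupling fails. For part (1), I would recall from Section \ref{sec:connectionstoperms} that a URT arises from a uniformly random permutation of $\{2,\dots,n\}$ via the "rightmost smaller predecessor" rule, while a $p$-BRT arises from the same rule applied to a $p$-biased riffle shuffle. Using the inverse-shuffle description (Proposition on inverse shuffles), the $p$-biased shuffle is obtained by assigning i.i.d.\ digits $X_2,\dots,X_n$ from $[a]$ with law $p$ and sorting stably by digit. The key observation is that whenever all the digits $X_2,\dots,X_n$ are \emph{distinct}, the stable sort produces a uniformly random permutation, i.e.\ conditionally on the digits being distinct the $p$-BRT permutation has exactly the URP distribution. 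Hence one can couple so that $\mathcal{L}_{\geq k,n}$ and $Y_{\geq k,n}^p$ agree on the event $\{X_i \text{ all distinct}\}$, giving
\[
d_{TV}(\mathcal{L}_{\geq k,n}, Y_{\geq k,n}^p) \le \Pro(\exists\, 2\le i<j\le n:\ X_i = X_j) \le \binom{n-1}{2}\sum_{s=1}^a p_s^2
\]
by a union bound over the $\binom{n-1}{2}$ pairs, since $\Pro(X_i = X_j) = \sum_s p_s^2$. This is where the main subtlety lies: one must argue carefully that "digits distinct" really does force the riffle-shuffle permutation to be uniform and that the conditional law is exactly uniform (not merely close), so that the standard coupling inequality $d_{TV}(U,V)\le \Pro(U\ne V)$ applies with $U=V$ on the good event.

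For part (2), with $p$ uniform on $[a]$ and $a\ge n$, I would replace the crude union bound by the exact probability that $n-1$ i.i.d.\ uniform draws from $[a]$ are all distinct, namely the birthday-problem quantity $\frac{(a-1)(a-2)\cdots(a-n+1)}{a^{n-1}}$; accounting for the root this is written as $\frac{a!}{(a-n)!\,a^n}$ in the statement. The same coupling then yields $d_{TV}\le 1 - \frac{a!}{(a-n)!\,a^n}$. Part (3) is then immediate: fix $n$ and let $a\to\infty$; in the uniform case $\frac{a!}{(a-n)!a^n}\to 1$ so the bound in (2) tends to $0$, and for general non-degenerate $p$ one notes $\sum_s p_s^2 \le \max_s p_s \to 0$ as the support grows (or more carefully, argues along a sequence of $p$'s whose max mass vanishes), so the bound in (1) tends to $0$; convergence in total variation implies convergence in distribution.

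For part (4), I would argue directly from the formula in Theorem \ref{thm:BRTExpkDesce}: $\E[Y_{\geq k,n}^p] = (n-k-1)\sum_{s=1}^a p_s(\sum_{r=s}^a p_r)^k + 1$, so it suffices to show $\sum_{s=1}^a p_s(\sum_{r\ge s} p_r)^k$ is maximized over the simplex by the uniform $p$. Writing $q_s = \sum_{r\ge s} p_r$ (so $1 = q_1 \ge q_2 \ge \cdots \ge q_a = p_a > 0$ and $p_s = q_s - q_{s+1}$ with $q_{a+1}=0$), the sum telescopes against the function $t\mapsto t^k$: $\sum_s (q_s - q_{s+1}) q_s^k$ is a right-endpoint Riemann-type sum for $\int_0^1 t^k\,dt$ over the partition $0=q_{a+1}<\cdots<q_1=1$, which is maximized (for the increasing integrand $t^k$) when the mesh is as fine as possible, i.e.\ when the $q_s$ are equally spaced, which is exactly $p_s\equiv 1/a$; alternatively one can use a direct convexity/rearrangement argument or Abel summation. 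The hard part here is making the Riemann-sum comparison rigorous without assuming $q_s$ equally spaced a priori — a clean way is to show that for any partition refining to the uniform one the sum only increases, or to invoke that the sum $\le \int_0^1 t^k dt + (\text{mesh-dependent error})$ and that the uniform choice attains the stated value $\frac{1}{a^{k+1}}\sum_{s=1}^a s^k$. Finally, part (5) follows by applying (1)--(4) to $k$ and $k+1$ and using $X_{k,n}^\cdot = Y_{\geq k,n}^\cdot - Y_{\geq k+1,n}^\cdot$ together with the coupling (the exact-$k$ counts agree on the same good event), plus linearity of expectation for the analogue of (4); I would note the total-variation bounds transfer verbatim since the coupling is simultaneous across all $k$.
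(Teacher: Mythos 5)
Your parts (1)--(3) are correct and are essentially the argument the paper intends: realize the $p$-biased shuffle through i.i.d.\ digits $X_2,\dots,X_n$, note that conditionally on the digits being pairwise distinct the stably sorted permutation is exactly uniform (by exchangeability of the i.i.d.\ vector, conditioning on the symmetric event ``all distinct'' leaves all rank orders equally likely), build a URP that agrees with the shuffle on that event, and bound $d_{TV}$ by the collision probability --- $\binom{n-1}{2}\sum_s p_s^2$ by a union bound over the $\binom{n-1}{2}$ pairs, and the exact birthday quantity in the uniform case. (For (2), the exact probability that the $n-1$ digits are distinct is $\frac{a!}{(a-n+1)!\,a^{n-1}}\ \ge\ \frac{a!}{(a-n)!\,a^n}$, so the stated bound follows a fortiori; your ``accounting for the root'' gloss is harmless.) The paper gives no details and defers to \cite{Altokislak}, where this coupling is carried out for leaves, so you have reconstructed the intended proof of these parts.

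Part (4) is where your argument genuinely fails, and the failure is directional. With $q_s=\sum_{r\ge s}p_r$, the quantity $S=\sum_{s=1}^a(q_s-q_{s+1})q_s^k$ is a right-endpoint (upper) Riemann sum of the increasing function $t\mapsto t^k$; such a sum \emph{decreases} under refinement of the partition, so a fine, equally spaced partition makes $S$ small, while the coarse/degenerate limit drives $S$ up to its supremum $\sum_s p_sq_s^k\le\sum_s p_s=1$. Your claim that the sum is ``maximized when the mesh is as fine as possible'' is exactly backwards. Concretely, for $a=2$, $k=1$ one has $S=p_1+p_2^2=1-p_2(1-p_2)$, which is \emph{minimized} at the uniform $p_2=\tfrac12$ and tends to $1$ as $p_2\to0$; correspondingly $\E\bigl[Y_{\ge1,n}^{(0.9,\,0.1)}\bigr]=(n-2)(0.91)+1>(n-2)(0.75)+1=\E\bigl[Y_{\ge1,n}^{2}\bigr]$. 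So not only does your Riemann-sum argument not establish the inequality $\E[Y_{\ge k,n}^p]\le\E[Y_{\ge k,n}^a]$, but that inequality is false as stated for $k\ge1$: the extremal statement in \cite{Altokislak} concerns leaves (exactly $0$ descendants), and the direction does not transfer to ``at least $k$ descendants.'' (Nor is uniform always the minimizer: for $a=2$, $k=2$, $S=1-p_2+p_2^3$ is minimized at $p_2=1/\sqrt3$.) Part (5) inherits the same problem, e.g.\ for $a=2$, $k=1$ the leading coefficient $\sum_sp_sq_s^k(1-q_s)=p_2^2(1-p_2)$ is maximized at $p_2=\tfrac23$, not $\tfrac12$. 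You should either restrict the extremal claim to the leaf case $k=0$ or re-derive the correct extremal direction from the explicit formula in Theorem \ref{thm:BRTExpkDesce}, and in any case the Riemann-sum monotonicity must be corrected.
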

Note that the results in Theorem \ref{thm:compare} can also be derived  for the number of branches and  the depth of node  $n$ in a similar way.  

We may now state the main result for the  URT case. 

\begin{theorem}
 Let $\mathcal{L}_{\geq k, n}$ be the number of nodes with at least $k$ descendants in a URT. Then we have $$\frac{\mathcal{L}_{\geq k, n} - \frac{n}{k+1}}{\sqrt{n \left(\frac{1}{k+1} + \frac{2H_{2k+1}-2H_{k+1}}{k+1} - \frac{2k+1}{(k+1)^2} \right)}} \longrightarrow_d \mathcal{G}, $$ as $n \rightarrow \infty$.
\end{theorem}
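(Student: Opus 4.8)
The plan is to transfer the central limit theorem for $Y_{\geq k,n}^a$ (the $a$-RT case) to the URT case by letting $a\to\infty$, using the total variation bound from Theorem \ref{thm:compare}(2). The key observation is that the CLT for BRTs was proved (via Stein's method, Theorem 3.6 of \cite{Ross}) with an explicit Wasserstein bound of order $\mathcal{O}(n^{-1/2})$, and that Corollary \ref{cor:aRTkDescendantsVar} identifies $\lim_{n\to\infty}\lim_{a\to\infty}\Var(Y_{\geq k,n}^a)/n = \frac{1}{k+1} + \frac{2H_{2k+1}-2H_{k+1}}{k+1} - \frac{2k+1}{(k+1)^2}=:\sigma_k^2$, which is exactly the variance constant appearing in the statement. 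So the strategy is: fix $n$, choose $a=a(n)$ growing fast enough that $d_{TV}(\mathcal{L}_{\geq k,n},Y_{\geq k,n}^{a(n)})\to 0$, then combine (i) the Wasserstein/CLT approximation of the standardized $Y_{\geq k,n}^{a(n)}$ by $\mathcal{G}$, (ii) the closeness of the mean $\E[Y_{\geq k,n}^{a(n)}]$ to $\frac{n}{k+1}$ and of $\Var(Y_{\geq k,n}^{a(n)})/n$ to $\sigma_k^2$ (both from Theorem \ref{thm:BRTExpkDesce} and Corollary \ref{cor:aRTkDescendantsVar}, with $a\to\infty$), and (iii) the total variation bound coupling $\mathcal{L}_{\geq k,n}$ to $Y_{\geq k,n}^{a(n)}$.

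Concretely, first I would recall from Theorem \ref{thm:compare}(2) that for $a\ge n\ge 3$, $d_{TV}(\mathcal{L}_{\geq k,n},Y_{\geq k,n}^a)\le 1-\frac{a!}{(a-n)!a^n}$; choosing, say, $a(n)=n^3$ makes this bound tend to $0$ as $n\to\infty$ (since $\frac{a!}{(a-n)!a^n}=\prod_{j=0}^{n-1}(1-j/a)\ge 1-\binom{n}{2}/a\to 1$). Next, since total variation dominates Wasserstein distance on a metric space of bounded diameter only up to the diameter, I would instead argue directly in distribution: $d_{TV}\to 0$ means we may couple $\mathcal{L}_{\geq k,n}$ and $Y_{\geq k,n}^{a(n)}$ to agree with probability $\to 1$, so it suffices to prove the CLT for the standardized $Y_{\geq k,n}^{a(n)}$ and then check that replacing the centering $\E[Y_{\geq k,n}^{a(n)}]$ by $\frac{n}{k+1}$ and the scaling $\sqrt{\Var(Y_{\geq k,n}^{a(n)})}$ by $\sqrt{n\sigma_k^2}$ does not change the limit. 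The CLT for $Y_{\geq k,n}^{a(n)}$ itself follows because the Wasserstein bound in the BRT CLT theorem is $\frac{2k+1}{\sigma}\big((2k+1)+\frac{\sqrt{28}(2k+1)^{1/2}}{\sqrt\pi}\big)$ with $\sigma^2=\Var(Y_{\geq k,n}^{a})$ of order $n$ uniformly in $a$ large — here one must be slightly careful that the implied constants in ``order $n$'' are uniform in $a\ge$ some threshold, which follows from inspecting the explicit variance formula in Corollary \ref{cor:aRTkDescendantsVar} and its $a\to\infty$ limit.

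For the mean and variance replacement: by Theorem \ref{thm:BRTExpkDesce}, $\E[Y_{\geq k,n}^{a}]=(n-k-1)\frac{1}{a^{k+1}}\sum_{s=1}^a s^k+1 \to \frac{n}{k+1}$ as $a\to\infty$ for fixed $n$, but we need a two-variable statement with $a=a(n)\to\infty$ as $n\to\infty$; using $\sum_{s=1}^a s^k=\frac{a^{k+1}}{k+1}+\mathcal{O}(a^k)$ gives $\E[Y_{\geq k,n}^{a(n)}]=\frac{n}{k+1}+\mathcal{O}(n/a(n))+\mathcal{O}(1)=\frac{n}{k+1}+o(\sqrt n)$ once $a(n)\gg\sqrt n$, which our choice $a(n)=n^3$ amply satisfies. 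Similarly one checks $\Var(Y_{\geq k,n}^{a(n)})=n\sigma_k^2+o(n)$ from the explicit formula in Corollary \ref{cor:aRTkDescendantsVar}, expanding all the power-sum and geometric-sum terms for $a=n^3$. Then Slutsky-type reasoning converts the CLT for $(Y_{\geq k,n}^{a(n)}-\E[Y_{\geq k,n}^{a(n)}])/\sqrt{\Var(Y_{\geq k,n}^{a(n)})}$ into the claimed CLT with the clean centering and scaling, and the $d_{TV}\to 0$ coupling transfers it to $\mathcal{L}_{\geq k,n}$.

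\textbf{Main obstacle.} I expect the genuinely delicate point to be establishing the required \emph{uniformity} in $a$: both that the variance lower bound $\Var(Y_{\geq k,n}^a)\ge c\,n$ holds with a constant $c>0$ independent of $a$ (for $a$ large, or at least for $a=a(n)$), so that the Stein bound really is $\mathcal{O}(n^{-1/2})$ along the chosen sequence, and that the $o(n)$ error in the variance and the $o(\sqrt n)$ error in the mean are uniform enough. This amounts to a careful but routine asymptotic analysis of the explicit (if unwieldy) formulas in Corollary \ref{cor:aRTkDescendantsVar}; alternatively one could sidestep it by proving a Stein-method CLT directly for $\mathcal{L}_{\geq k,n}$ using the URT's own local-dependence structure (node $i$'s descendant count depends on the permutation representation only locally), which mirrors the BRT proof and avoids the double limit altogether — that would be the cleaner route if the uniformity estimates prove annoying.
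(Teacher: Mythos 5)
Your proposal follows essentially the same route as the paper: prove the CLT for $Y_{\geq k,n}^{a}$ via the Stein/local-dependence bound, let $a=a(n)\to\infty$ so that $Y_{\geq k,n}^{a(n)}$ and $\mathcal{L}_{\geq k,n}$ become close in total variation, and then replace the mean and variance by their limiting values $\frac{n}{k+1}$ and $n\sigma_k^2$ via a Slutsky argument. If anything you are more careful than the paper on the one delicate point: the paper takes $a=2n$, for which the bound $1-\frac{a!}{(a-n)!a^n}$ does \emph{not} tend to $0$, whereas your choice $a(n)=n^{3}$ (anything $\gg n^{2}$) is what actually makes the coupling term vanish.
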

\begin{proof} Since $Y_{\geq k, n}^a \rightarrow_d \mathcal{L}_{k,n}$, as $a \rightarrow \infty$ we have $$\mathbb{E}[Y_{\geq k, n}^a] \longrightarrow \mathbb{E}[\mathcal{L}_{k,n}],$$ and  $$Var(Y_{\geq k, n}^a) \longrightarrow Var(\mathcal{L}_{k,n}),$$ as $a \rightarrow \infty$. This can be rigorously justified by using 	Theorem 5.9 of \cite{Gut} since, $\{Y_{\geq k, n}^a\}_{a \geq 1}$ is clearly uniformly integrable.  
We write 
 $$\frac{\mathcal{L}_{\geq k,n} - \mathbb{E}[\mathcal{L}_{\geq k,n} ]}{\sqrt{\Var (\mathcal{L}_{\geq k,n} )}}  = \frac{\mathcal{L}_{\geq k,n} - \mathcal{L}_{\geq k,n}^{a}}{\sqrt{\Var (\mathcal{L}_{\geq k,n} )}}
 +  \frac{\mathcal{L}_{\geq k,n}^a - \mathbb{E}[\mathcal{L}_{\geq k,n}^a ]}{\sqrt{\Var (\mathcal{L}_{\geq k,n}^a )}}  \frac{\sqrt{\Var (\mathcal{L}_{\geq k,n}^a )}}{\sqrt{\Var (\mathcal{L}_{\geq k,n})}}
+  \frac{\mathbb{E}[\mathcal{L}_{\geq k,n}^a] - \mathbb{E}[\mathcal{L}_{\geq k,n} ]}{\sqrt{\Var (\mathcal{L}_{\geq k,n} )}}$$
We now take the limit of both sides as $a, n \rightarrow \infty$ with $a = 2n$. Then the first and the third terms on the right-hand side converge to 0 a.s., and the second term converges in distribution to a standard normal. Result follows. 
\hfill $\square$
\end{proof}

We may use the approach above to understand the nodes with exactly $k$ descendants in a URT. This was previously studied in \cite{Devroye}. 

\begin{theorem} \cite{Devroye}
Let $\mathcal{L}_{k, n}$ be the number of nodes with exactly $k$ descendants in a URT.
$$\frac{\mathcal{L}_{k, n} - \frac{n}{(k+1)(k+2)}}{\sqrt{\Var(\mathcal{L}_{k, n})}} \longrightarrow_d \mathcal{G}, $$ as $n \rightarrow \infty$. Here, $\Var(\mathcal{L}_{k, n}) = \lim_{a \rightarrow \infty} \Var(X_{k,n}^{a})$, where $ \Var(X_{k,n}^{a})$ is as in \eqref{eqn:varxkna}. 
\end{theorem}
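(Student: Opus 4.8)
\noindent\textit{Proof sketch.} The plan is to follow the template of the preceding theorem: first establish a central limit theorem for the $a$-RT count $X_{k,n}^{a}$ (the number of nodes with exactly $k$ descendants in an $a$-RT, written $Y_{k,n}^{a}$ in Theorem~\ref{thm:compare}) with variance of order $n$, and then transfer it to the URT count $\mathcal{L}_{k,n}$ by letting $a\to\infty$, using the comparison estimates of Theorem~\ref{thm:compare} together with convergence of the first two moments.

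For the $a$-RT step, combine the representation from Theorem~\ref{thm:BRTExpkDesce} with the identity of Corollary~\ref{cor:ExactKDesBRTExpect} to get $X_{k,n}^{a}=_d\sum_{i}D_i^{k}$ (up to one bounded boundary term), where $D_i^{k}:=C_i^{k}-C_i^{k+1}$ and the $X_j$ are i.i.d.\ uniform on $[a]$. Since $C_i^{k+1}\le C_i^{k}$, each $D_i^{k}$ is a $0/1$ indicator, in fact $D_i^{k}=\1(X_i\le\{X_{i+1},\dots,X_{i+k}\},\ X_i>X_{i+k+1})$, so $D_i^{k}$ depends only on $X_i,\dots,X_{i+k+1}$ and the family $\{D_i^{k}\}_i$ has dependency neighbourhoods of size at most $2k+3$. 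One can then apply Theorem~3.6 of \cite{Ross} exactly as in the proof of the CLT for $Y_{\ge k,n}^{p}$: with $D=2k+3$ and the elementary indicator bounds $\E|D_i^{k}-\E D_i^{k}|^{3}\le\Var(D_i^{k})$ and $\E(D_i^{k}-\E D_i^{k})^{4}\le\Var(D_i^{k})$, this yields
\[
d_W\!\left(\frac{X_{k,n}^{a}-\E[X_{k,n}^{a}]}{\sqrt{\Var(X_{k,n}^{a})}},\ \mathcal{G}\right)\ \le\ \frac{2k+3}{\sqrt{\Var(X_{k,n}^{a})}}\left((2k+3)+\frac{\sqrt{28}\,(2k+3)^{1/2}}{\sqrt{\pi}}\right),
\]
with a constant depending only on $k$. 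By the Corollary following Theorem~\ref{thm:kDesVarGeneral}, i.e.\ by \eqref{eqn:varxkna} and its large-$n$ asymptotics, $\Var(X_{k,n}^{a})/n$ tends to a strictly positive constant as $n\to\infty$ for each fixed $a$; hence $(X_{k,n}^{a}-\E X_{k,n}^{a})/\sqrt{\Var(X_{k,n}^{a})}\to_d\mathcal{G}$ with rate $O(n^{-1/2})$, and the displayed bound stays uniform in $a$ as long as $\Var(X_{k,n}^{a})$ is bounded below by a positive multiple of $n$.

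For the transfer, Theorem~\ref{thm:compare}(5) gives $X_{k,n}^{a}\to_d\mathcal{L}_{k,n}$ as $a\to\infty$ for each fixed $n$, and since $0\le X_{k,n}^{a}\le n$ this family is uniformly integrable in $a$; therefore $\E[X_{k,n}^{a}]\to\E[\mathcal{L}_{k,n}]$ and $\Var(X_{k,n}^{a})\to\Var(\mathcal{L}_{k,n})$ (e.g.\ by Theorem~5.9 of \cite{Gut}), which also identifies the normalizing variance in the statement. Now write, as in the preceding proof,
\[
\frac{\mathcal{L}_{k,n}-\E[\mathcal{L}_{k,n}]}{\sqrt{\Var(\mathcal{L}_{k,n})}}=\frac{\mathcal{L}_{k,n}-X_{k,n}^{a}}{\sqrt{\Var(\mathcal{L}_{k,n})}}+\frac{X_{k,n}^{a}-\E[X_{k,n}^{a}]}{\sqrt{\Var(X_{k,n}^{a})}}\cdot\frac{\sqrt{\Var(X_{k,n}^{a})}}{\sqrt{\Var(\mathcal{L}_{k,n})}}+\frac{\E[X_{k,n}^{a}]-\E[\mathcal{L}_{k,n}]}{\sqrt{\Var(\mathcal{L}_{k,n})}},
\]
and let $n\to\infty$ along a sequence $a=a(n)$ growing fast enough, say $a(n)=n^{3}$. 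By Theorem~\ref{thm:compare}(2), $d_{TV}(\mathcal{L}_{k,n},X_{k,n}^{a(n)})\le 1-a(n)!/((a(n)-n)!\,a(n)^{n})\to 0$, so there is a coupling under which the first term tends to $0$ in probability; the third term vanishes because $\E[X_{k,n}^{a(n)}]-\E[\mathcal{L}_{k,n}]$ can be made $o(1)$ while $\Var(\mathcal{L}_{k,n})\to\infty$; and the ratio of standard deviations in the middle term tends to $1$. Combining with the $a$-RT CLT and Slutsky's theorem gives the claim.

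The main obstacle is making this double limit rigorous: one must choose the path $a=a(n)$ so that \emph{simultaneously} (i) the coupling error in Theorem~\ref{thm:compare}(2) vanishes, (ii) $\Var(X_{k,n}^{a(n)})$ still diverges, so that the Stein bound $O(\Var(X_{k,n}^{a(n)})^{-1/2})$ vanishes, and (iii) $\Var(X_{k,n}^{a(n)})/\Var(\mathcal{L}_{k,n})\to 1$. All three hold once one knows that $\Var(\mathcal{L}_{k,n})=c_k\,n+O(1)$ with $c_k>0$ — equivalently, that the $a\to\infty$ limit of the right-hand side of \eqref{eqn:varxkna} is affine in $n$ with positive leading coefficient, which is exactly Devroye's variance constant $\sigma_k^{2}$. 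This variance computation, rather than any soft argument, is the one ingredient that genuinely has to be carried out by hand.
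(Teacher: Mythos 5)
Your proposal follows exactly the route the paper intends: the paper gives no separate proof of this theorem, saying only that ``we may use the approach above'' (the Stein-method CLT for a locally dependent indicator sum at fixed $a$, followed by the $a\to\infty$ transfer via Theorem \ref{thm:compare}) and citing Devroye for the result, and your sketch supplies the one genuinely new detail, namely that $X_{k,n}^{a}$ is itself a sum of $0/1$ indicators $D_i^{k}=C_i^{k}-C_i^{k+1}$ with dependency range $2k+3$, so the same Stein bound applies. The only quantitative quibble is that with $a(n)=n^{3}$ the total-variation bound only controls $\Var(X_{k,n}^{a(n)})-\Var(\mathcal{L}_{k,n})$ up to $O(n)$, the same order as the variance itself, so step (iii) needs a faster path such as $a(n)=n^{4}$ --- a point you already flag by insisting the path be tuned.
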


\section{Depth of node $n$}\label{sec:depth}

For uniform recursive trees the depth of node $n$ can be computed by referring to the recursive construction steps see for example \cite{Devroye88} or \cite{Feng05}. Since we do not have such a construction for BRTs, we will need to proceed differently here.
First we observe that given a permutation representation for a recursive tree we can determine $\mathcal{D}_n^{p}$, the depth of node $n$, by counting the number of anti-records when we go to the left  from the position of $n$ until 1. For example for $\gamma= 12574863$ we can see that the depth of node $8$ is 3, which corresponds to the number of anti-records when we go to the left from 8, which are 4, 2 and 1.

Thus given $\mathcal{P}_n = \gamma^{-1}(n)$, the position of $n$ in the permutation, we get the depth of node $n$ by calculating for all $i< \mathcal{P}_n$, the probability that $\gamma(i) < \min_{i<j<\mathcal{P}_n} \{\gamma(j)\}$. We will now use these observations to get the expected value and variance of the depth of $n$ in a BRT. We will need the following lemma:

\begin{lemma}
Let $\mathcal{P}_{n}^{p}$ denote the position of $n$ in a $p$-biased random permutation. Then for ${2 \leq k < n}$,
\begin{equation}
\Pro \left ( \mathcal{P}_{n}^{p} = k\right )  =
 \sum_{s=2}^{a} p_s\left (\sum_{r=1}^{s} p_r \right ) ^{k-2} \left  (\sum_{r=1}^{s-1} p_r \right ) ^{n-k}
\end{equation}
and
\begin{equation}
\Pro \left( \mathcal{P}_{n}^{p}= n \right) =  \sum_{s=1}^{a} p_s\left (\sum_{r=1}^{s} p_r \right ) ^{n-2}.
\end{equation}
\end{lemma}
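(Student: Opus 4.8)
The plan is to compute the probability that $n$ lands in position $k$ of the $p$-biased random permutation $\gamma$ by working with the inverse shuffle description, where each card $i \in \{2,\dots,n\}$ is independently assigned a digit $X_i \in [a]$ with $\Pro(X_i = s) = p_s$, and then the cards are sorted by digit with ties broken by preserving the original (increasing) order. The key observation is that the position of the card labelled $n$ in the sorted order depends only on how many cards end up weakly before it. Since $n$ is the largest label, among the cards sharing $n$'s digit it comes last; so if $X_n = s$, card $n$ is preceded in $\gamma$ by exactly those cards $i < n$ with $X_i < s$, together with those $i < n$ with $X_i = s$ (all of which precede $n$ since $i < n$). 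Hence, conditionally on $X_n = s$, the position $\mathcal{P}_n^p$ equals $1 + \#\{i : X_i \le s\}$, and $\mathcal{P}_n^p = k$ iff exactly $k-1$ of the $n-2$ cards in $\{2,\dots,n-1\}$ have digit $\le s$.

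First I would split on the value $s = X_n$. For $2 \le s \le a$, the number of cards among $\{2,\dots,n-1\}$ with digit $\le s$ is $\mathrm{Binomial}(n-2, q_s)$ with $q_s = \sum_{r=1}^{s} p_r$, so one naively gets a binomial probability $\binom{n-2}{k-1} q_s^{k-1}(1-q_s)^{n-1-k}$. However — and this is the crucial subtlety — the claimed formula has \emph{no} binomial coefficient, so the counting must be finer than ``number of cards with small digit.'' The point is that the \emph{identities} of the cards preceding $n$ are not free: for $\mathcal{P}_n^p = k$ we need the card immediately to the left of $n$ (call it the card at position $k-1$) and all earlier ones to be consistent, but actually the cleaner route is to directly say: $\mathcal{P}_n^p = k$ with $X_n = s$ happens iff the $k-1$ cards preceding $n$ are some prescribed set and the remaining $n-k$ cards $i$ satisfy $X_i > s$ or ($X_i = s$ — impossible since they'd precede $n$), i.e. $X_i \ge s$ is wrong; rather each of the $n-k$ cards after $n$ must have digit $> s$ (digit $\ge s$ would not work if equal). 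Wait — re-examining: cards with digit exactly $s$ and label $> n$ don't exist, so \emph{every} card with digit $\le s$ precedes $n$; thus the $n-k$ cards after $n$ all have digit $\ge s+1$, contributing $\left(\sum_{r=s+1}^{a} p_r\right)^{n-k} = \left(1 - q_s\right)^{n-k}$, which matches the factor $\left(\sum_{r=1}^{s-1}p_r\right)^{n-k}$ only if we reindex — so I would recheck the indexing convention and discover the clean statement: conditioning on $X_n = s$, position $k$ requires the specific card at slot $k-1$ to have digit exactly $s$ and all later cards digit $> s$... Let me restate the honest plan below.

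The correct decomposition I would carry out: write $\mathcal{P}_n^p = k$ as the event that, reading left to right, positions $1,\dots,k-1$ are occupied and position $k$ is $n$. The card in position $k-1$ must be the largest-labelled card with the minimum digit among the remaining cards at that stage; cleaner is to sum over $s = $ the digit of the card in position $k-1$ (equivalently, after establishing $X_n$'s role). Concretely: $\mathcal{P}_n^p = k$ iff there is some $s \in \{2,\dots,a\}$ with $X_n \le s$ forced appropriately, $k-2$ of the cards $\{2,\dots,n-1\}$ have digit $\le s-1$ (these sit strictly before position $k-1$), one specified card sits at position $k-1$, and the rest have digit $\ge s$... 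I would set this up carefully so that the multinomial/binomial coefficients telescope away because exactly one arrangement is compatible once the digit \emph{multiset} restricted to the relevant cards is fixed — this is the hallmark of riffle-shuffle computations and is exactly why the answer is coefficient-free. After getting $\Pro(\mathcal{P}_n^p = k) = \sum_{s=2}^{a} p_s\, q_{s}^{\,k-2}\, q_{s-1}^{\,n-k}$ for $2 \le k < n$, the boundary case $k = n$ is handled the same way but with no ``cards after $n$'' constraint, so the sum runs $s = 1,\dots,a$ and the last factor is absorbed, giving $\sum_{s=1}^{a} p_s\, q_s^{\,n-2}$.

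The main obstacle will be precisely this bookkeeping: identifying which cards are forced to lie before $n$, which after, and arguing that among cards with a common digit the relative order is deterministic (increasing by label), so that once the digit assignment to the ``active'' cards is chosen there is a \emph{unique} resulting permutation prefix — this collapses the binomial coefficient and is the only non-mechanical step. A secondary care point is the off-by-one in the exponents ($k-2$ versus $k-1$, $n-k$ versus $n-k+1$), which I would pin down by sanity-checking against small cases (e.g. $n=3$) and against the requirement that $\sum_{k=2}^{n}\Pro(\mathcal{P}_n^p = k) = 1$, using $q_a = 1$ and a telescoping/geometric-sum identity $\sum_{s} p_s(q_s^{k-2}q_{s-1}^{n-k})$ summed over $k$.
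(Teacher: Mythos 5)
There is a genuine gap, and it starts at your ``key observation.'' You are computing the position of the \emph{card labelled} $n$ in the digit-sorted word, i.e.\ in $\gamma^{-1}$; that quantity is $\gamma(n)$, not $\mathcal{P}_n^{p}=\gamma^{-1}(n)$, the position at which the \emph{value} $n$ appears in the shuffle $\gamma$ itself. For your quantity the answer really is binomial: conditionally on $X_n=s$, the position of card $n$ in the sorted order is $2+\#\{i<n: X_i\le s\}$, so $\Pro(\gamma(n)=k)=\sum_s p_s\binom{n-2}{k-2}q_s^{k-2}(1-q_s)^{n-k}$ with $q_s=\sum_{r\le s}p_r$, and the coefficient does \emph{not} telescope away. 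Already for $n=4$, $a=2$ this gives $\Pro(\gamma(4)=2)=p_1p_2^2$, whereas the lemma's formula gives $p_1^2p_2$ --- and $p_1^2p_2$ is the correct value of $\Pro(\gamma(2)=4)$ by direct enumeration of the shuffle. So the mismatch you noticed is not an off-by-one or a bookkeeping subtlety; it signals that you are analysing the wrong random variable. Your subsequent ``correct decomposition'' paragraph never repairs this: it still counts \emph{how many} cards have small digits, which is exactly what produces a binomial coefficient, and the assertion that the coefficients collapse ``because exactly one arrangement is compatible'' is left unjustified (and is false for the quantity you actually set up).

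The missing idea is a pointwise identification of $\mathcal{P}_n^{p}$, not a distributional count. Since the cards of each pile appear in $\gamma$ in increasing order at exactly the positions $i$ with $X_i$ equal to that pile's digit, and $n$ is the last card of the highest nonempty pile, one has $\mathcal{P}_n^{p}=\max\{2\le k\le n: X_k\ge X_j \text{ for all } j\}$, i.e.\ $\{\mathcal{P}_n^{p}=k\}=\{X_k\ge X_2,\dots,X_{k-1}\}\cap\{X_k>X_{k+1},\dots,X_n\}$ (equivalently, use the comparison rule $\gamma(i)<\gamma(j)\iff X_i\le X_j$ for $i<j$). Conditioning on $X_k=s$ makes the remaining $n-2$ constraints independent, giving $\left(\sum_{r=1}^{s}p_r\right)^{k-2}\left(\sum_{r=1}^{s-1}p_r\right)^{n-k}$ with no combinatorial factor, because $k$ is a fixed index rather than a count; the $s=1$ term vanishes when $k<n$ (some $X_j$ with $j>k$ would have to be $<1$) and survives when $k=n$, which is precisely the difference between the two displayed formulas. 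This is the paper's argument, and without this step your proposal does not reach the stated formula.
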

\begin{proof}
As before we will use the construction of an inverse biased riffle shuffle permutation. We know that $n$ will be the last card in the last non-empty pile. Moreover the indices that get digit $s$, are the positions of the cards in the $s$-th pile. This means that the position of $n$ is the last index that gets the highest digit. We get
\begin{equation} \mathcal{P}_{n}^{p} = \max \left \{2 \leq k \leq n: X_k \geq \{X_2, \dots, X_n\} \right \}
\end{equation}
so 
\begin{equation}\Pro(  \mathcal{P}_{n}^{p} = k) = \Pro \left ( X_k \geq \{X_2, \dots, X_{k-1} \}, X_k > \{X_{k+1}, \dots, X_{n}\} \right ).
\end{equation}

By conditioning on $X_k$ we get independent events and can thus calculate this probability for $2\leq k<n$. We get
\begin{equation}
\begin{split}
\Pro& (  \mathcal{P}_{n}^{p} = k) \\
&= \Pro \left ( X_k \geq \{X_2, \dots, X_{k-1} \}, X_k > \{X_{k+1}, \dots, X_{n}\} \right ) \\
&= \sum_{s=1}^{a} \Pro \left ( X_k \geq \{X_2, \dots, X_{k-1} \}, X_k > \{X_{k+1}, \dots, X_{n}\} | X_k =s \right ) \Pro(X_k =s)\\
&= \sum_{s=1}^{a} \Pro \left ( s \geq \{X_2, \dots, X_{k-1} \}, s > \{X_{k+1}, \dots, X_{n}\} \right ) \Pro(X_k =s)\\
&= \sum_{s=1}^{a} \Pro \left ( s \geq \{X_2, \dots, X_{k-1} \}\right ) \Pro \left ( s > \{X_{k+1}, \dots, X_{n}\} \right ) \Pro(X_k =s)\\
&= \sum_{s=2}^{a} p_s\left (\sum_{r=1}^{s} p_r \right ) ^{k-2} \left (\sum_{r=1}^{s-1} p_r \right ) ^{n-k}. \\
\end{split}
\end{equation}
We only sum from $s=2$ since if $X_k=1$, $X_k$ cannot be strictly greater than $X_{k+1}, \dots, X_n$. However, if $k=n$, we have the additional possibility that all $X_i$ are equal to 1 and thus get 
\begin{equation}\Pro(  \mathcal{P}_{n}^{p} = n) =  \sum_{s=1}^{a} p_s\left (\sum_{r=1}^{s} p_r \right ) ^{n-2}.
\end{equation}
\hfill $\square$
\end{proof}

\begin{theorem} \label{ThmDepthBRT}
Let $\mathcal{D}_n^{p}$ denote the depth of node $n$ in a $p$-BRT. Then 
\begin{eqnarray*} \label{eq:DepthBRT}
\E[\mathcal{D}_n^{p}] &=& \sum_{s=2}^{a} \frac{p_s}{\sum_{r=1}^{s-1} p_r} \sum_{s'=2}^{a} \left [\left (\sum_{r=1}^{s'} p_r \right )^{n-1}  - \left ( \sum_{r=1}^{s'-1} p_r \right )^{n-1} \right ] \\
&& -\sum_{s=2}^{a} \frac{p_s}{\sum_{r=1}^{s-1} p_r} \sum_{s'=2}^{a} p_{s'} \frac{\left (\sum_{r=1}^{s'-1} p_r \right ) ^{n-1}  - \left ( \sum_{r=s}^{a} p_r  \sum_{r=1}^{s'} p_r \right )^{n-1}}{\sum_{r=1}^{s'-1} p_r - \sum_{r=s}^{a} p_r  \sum_{r=1}^{s'} p_r  }
 + p_1  \sum_{s=2}^{a} \frac{1}{p_s} \\
&& \qquad  \cdot \left [(n-2)\left (\sum_{r=1}^{s} p_r \right)^{n}- (n-1)\left (\sum_{r=1}^{s} p_r\right)^{n-1}\sum_{r=1}^{s-1} p_r  + \sum_{r=1}^{s} p_r \left (\sum_{r=1}^{s-1} p_r \right )^{n-1} \right ] \\
&&  + \sum_{s=2}^{a} \left( \left (\sum_{r=1}^{s} p_r \right ) ^{n-1} -  \left (\sum_{r=1}^{s-1} p_r \right ) ^{n-1}  \right ) + \left [\sum_{s=2}^{a} p_s \frac{1 - \left ( \sum_{r=s}^{a} p_r \right )^{n-2}}{\sum_{r=1}^{s-1} p_r}+(n-2)p_1 +1\right]p_1^{n-1}. 
\end{eqnarray*}
Moreover asymptotically we have
\begin{equation} \label{equ:DepthBRTninf}
\lim_{n\to \infty} \frac{\E[\mathcal{D}_n^{p}]}{n} = p_1.
\end{equation}
\end{theorem}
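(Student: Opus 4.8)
\textbf{Proof plan for Theorem \ref{ThmDepthBRT}.}

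The plan is to compute $\E[\mathcal{D}_n^p]$ by writing the depth of node $n$ as a sum of indicators and then conditioning on $\mathcal{P}_n^p$, the position of $n$ in the inverse-riffle-shuffle permutation, whose law is supplied by the preceding Lemma. Concretely, given $\mathcal{P}_n^p = k$, the depth of $n$ equals the number of anti-records among $\gamma(2),\dots,\gamma(k-1)$ when scanned leftward from position $k$; so in terms of the i.i.d. digits $X_2,\dots,X_n$ with $\Pro(X_j=s)=p_s$, an index $i$ with $2\le i< k$ contributes an anti-record precisely when $X_i < \min\{X_2,\dots,X_{i-1}\}$ \emph{and} $X_i \le \min\{X_{i+1},\dots,X_{k-1}\}$ (the second condition records that none of the digits strictly between $i$ and $k$ falls below $X_i$, so that $\gamma(i)$ is still a running minimum at the time we pass it). Writing $\mathcal{D}_n^p = \1(\mathcal{P}_n^p>2) + \sum_{k} \1(\mathcal{P}_n^p=k)\sum_{i=2}^{k-1}\1(\text{$i$ is a leftward anti-record before $k$})$, one has
\[
\E[\mathcal{D}_n^p] = \sum_{k=3}^{n}\sum_{i=2}^{k-1} \Pro\bigl(\mathcal{P}_n^p=k,\ X_i<\{X_2,\dots,X_{i-1}\},\ X_i\le\{X_{i+1},\dots,X_{k-1}\}\bigr) + \Pro(\mathcal{P}_n^p>2),
\]
and the joint probability factorizes after conditioning on $X_i=s$ into a product of three independent blocks: the digits before $i$ must exceed $s$, the digits between $i$ and $k-1$ must be $\ge s$, the digit at $k$ must be the strict maximum of all of $X_2,\dots,X_n$ (which forces $X_k=s'$ for some $s'$ and pins down the remaining digits). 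Using $\Pro(X_j>s)=\sum_{r>s}p_r$, $\Pro(X_j\ge s)=\sum_{r\ge s}p_r$ and the position-law from the Lemma, each summand becomes a product of powers of partial sums of $p$.

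The bulk of the work is then a bookkeeping exercise: substitute these closed forms, swap the order of summation over $i$ and $k$ (and over the conditioning values $s$, $s'$), and evaluate the resulting geometric sums in $i$ and in $k$. The several pieces in the statement of the theorem correspond to the different cases that must be split off: $k<n$ versus $k=n$ (the latter produces the $p_1^{n-1}$-flavoured terms because when $\mathcal{P}_n^p=n$ one is allowed $s=1$, i.e. all digits equal to $1$), the degenerate geometric ratios when a common ratio equals $1$ (these give the terms with a factor $\tfrac{1}{p_s}$ and polynomial-in-$n$ coefficients, coming from $\sum_{i} c^{\,i}$ with $c=1$, i.e. $X_i=1$ forcing the ``$>s$'' block to have probability-$1$ complement handled separately), and the $i=2$ boundary where there is no ``before $i$'' constraint. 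Keeping these cases straight, and correctly identifying which partial-sum ratios can equal $1$, is the main obstacle; the algebra itself is routine telescoping of $\sum c^j$ and $\sum j c^j$.

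For the asymptotic claim \eqref{equ:DepthBRTninf} I would argue more softly rather than taking $n\to\infty$ termwise in the exact formula. The clean way: conditionally on $\mathcal{P}_n^p=k$, the leftward anti-record indicators are the anti-record indicators of the i.i.d. sequence $X_{k-1},X_{k-2},\dots,X_2$ read in that order, so the expected number of them with value equal to $1$ alone already contributes, for each index $i$ at bounded distance behind $k$, probability tending to $p_1$ (since $X_i=1$ is automatically a running minimum and stays minimal with probability $1$); more precisely $\E[\mathcal{D}_n^p\mid \mathcal{P}_n^p=k]$ is a sum of $k-2$ terms, of which $\Pro(X_i=1)=p_1$ worth survive and the rest decay geometrically in the gap, so $\E[\mathcal{D}_n^p\mid \mathcal{P}_n^p=k] = (k-2)p_1 + O(1)$ uniformly. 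Since $\mathcal{P}_n^p \to n$ in the strong sense that $\E[n-\mathcal{P}_n^p]=O(1)$ (because $\Pro(\mathcal{P}_n^p\le n-j)$ decays geometrically in $j$ by the Lemma: the maximal digit must reappear $j$ times), we get $\E[\mathcal{D}_n^p] = (n-2)p_1 + O(1)$, whence $\E[\mathcal{D}_n^p]/n \to p_1$. Alternatively, one can simply note in the explicit formula that every bracketed expression is $O(n)$ with leading coefficient vanishing except the single term $(n-2)p_1\cdot\sum_s\frac1{p_s}\cdot(\cdot)$... — but the conditioning argument is shorter and avoids re-deriving the cancellations, so that is the route I would present.
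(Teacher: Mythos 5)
Your overall strategy --- write the depth as a sum of indicators, condition on the position $\mathcal{P}_n^p$ of $n$ via the preceding Lemma, and finish with the tower rule --- is the same route the paper takes, and your soft argument for \eqref{equ:DepthBRTninf} (that $\E[\mathcal{D}_n^{p}\mid \mathcal{P}_{n}^{p}=k]=(k-2)p_1+O(1)$ uniformly, while $\E[n-\mathcal{P}_{n}^{p}]=O(1)$ because $\Pro(\mathcal{P}_{n}^{p}=n-j)$ decays geometrically in $j$) is correct and arguably cleaner than extracting the limit from the closed form. The genuine problem is the indicator you propose to integrate. Given $\mathcal{P}_{n}^{p}=k$, the ancestors of $n$ are the right-to-left minima of $\gamma(1),\dots,\gamma(k-1)$: index $i$ contributes to $\mathcal{D}_n^{p}$ iff $\gamma(i)<\gamma(j)$ for all $i<j<k$, i.e.\ iff $X_i\le \{X_{i+1},\dots,X_{k-1}\}$, and \emph{nothing} is required of $X_2,\dots,X_{i-1}$. (In $\gamma=12574863$ the node $4$ is an ancestor of $8$ but is not a left-to-right anti-record.) Your extra clause $X_i<\min\{X_2,\dots,X_{i-1}\}$, conjoined with the rightward clause, forces $\gamma(i)=\min\{\gamma(2),\dots,\gamma(k-1)\}$, so your sum of indicators stays bounded instead of growing like $p_1k$; it cannot produce the stated formula, and it contradicts the (correct) characterization you yourself invoke two paragraphs later when doing the asymptotics. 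The paper works with $R_i=\1(\gamma(i)=\min_{i\le \ell\le \mathcal{P}_{n}^{p}}\gamma(\ell))$, which depends only on $X_i,\dots,X_{\mathcal{P}_{n}^{p}-1}$, and obtains $\E[\mathcal{D}_n^{p}\mid \mathcal{P}_{n}^{p}=k]=\sum_{s=2}^{a}p_s\bigl(1-(\sum_{r\ge s}p_r)^{k-2}\bigr)/\sum_{r<s}p_r+(k-2)p_1+1$ before averaging over $k$.

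A secondary issue: the asserted factorization ``into three independent blocks'' after conditioning only on $X_i=s$ is not valid, because the block ``$X_k$ is the maximum of $X_2,\dots,X_n$'' involves the very same digits $X_2,\dots,X_{k-1}$ as the other two blocks; with discrete digits (where ties occur) the event $\{\mathcal{P}_{n}^{p}=k\}$ is genuinely correlated with order statements among $X_2,\dots,X_{k-1}$. To make the coordinates decouple you must condition on the value $X_k=s'$ as well, after which each $X_j$ is constrained to an interval and the probability is a product over $j$; this is the point where the double sum over $s$ and $s'$ in the stated formula originates, so it cannot be waved away. With the corrected indicator and this extra conditioning, the rest of your plan (splitting off $k=n$, the degenerate geometric ratios at $s=1$, and the boundary indices) does line up with how the paper's formula is organized.
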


\begin{proof}
First of all we will find an expression for $\E[\mathcal{D}_n^{p} |  \mathcal{P}_{n}^{p} =k]$. As we said above, given $ \mathcal{P}_{n}^{p}$, we need to calculate the number of anti-records when we go from the position of $n$ to the left until we reach 1. Hence we define for $1 \leq i <  \mathcal{P}_{n}^{p}$, 
\begin{equation}R_i := \1 \left (\gamma(i) = \min_{i\leq k\leq  \mathcal{P}_{n}^{p}} \{\gamma(k)\}\right ).\end{equation} 
Then, given $ \mathcal{P}_{n}^{p}$, we get $\mathcal{D}_n^{p} = \sum_{i=1}^{ \mathcal{P}_{n}^{p}-1} R_i$, and so
\begin{equation} \E[\mathcal{D}_n^{p} |  \mathcal{P}_{n}^{p}] = \sum_{i=1}^{ \mathcal{P}_{n}^{p}-1} \E[R_i |  \mathcal{P}_{n}^{p}].\end{equation}
     We can simplify this sum by first observing that 
\begin{equation}\E[R_{ \mathcal{P}_{n}^{p}-1}] = \Pro( \gamma({ \mathcal{P}_{n}^{p}-1}) < \gamma({ \mathcal{P}_{n}^{p}})) = \Pro(\gamma({ \mathcal{P}_{n}^{p}-1}) < n) =1
\end{equation} and in general for all $i$, we have $\gamma(i) < \gamma( \mathcal{P}_{n}^{p}) = n$, so we can rewrite $R_i$ as \begin{equation}R_i = \1 \left (\gamma(i) = \min_{i\leq k<  \mathcal{P}_{n}^{p}} \{\gamma(k)\}\right ).\end{equation} 
Moreover $\Pro(R_1) = \Pro( \gamma(1) = \min_{1\leq k \leq  \mathcal{P}_{n}^{p}}\{\gamma(k)\} ) =1$ since $\gamma(1)=1$.
Now we will again use the inverse riffle shuffle construction to calculate the rest of these probabilities. We know that for $i<j$, $\gamma(i) < \gamma(j)$ if and only if $X_i \leq X_j$, since this means that in the spot $j$ will come a higher card from the same pile or from a pile corresponding to a higher digit, thus with higher labeled cards. 

Let $2\leq i \leq  \mathcal{P}_{n}^{p}$, then 
\begin{equation*}
\begin{split}
\E&[R_i |  \mathcal{P}_{n}^{p}] = \Pro \left ( \gamma(i) = \min_{i \leq k<  \mathcal{P}_{n}^{p}}  \{\gamma(k)\} | \gamma( \mathcal{P}_{n}^{p}) = n \right)  \\
&= \Pro \left ( X_i = \min_{i \leq k<  \mathcal{P}_{n}^{p}}  \{X_k\} | X_{ \mathcal{P}_{n}^{p}} \geq \{X_2, \dots, X_{ \mathcal{P}_{n}^{p}-1} \}, X_{ \mathcal{P}_{n}^{p}} > \{X_{ \mathcal{P}_{n}^{p}+1}, \dots, X_{n} \} \right ) 
= \Pro \left ( X_i = \min_{i \leq k<  \mathcal{P}_{n}^{p}}  \{X_k\}\right ) \\
&= \sum_{s=1}^{a}\Pro \left ( X_i = \min_{i \leq k<  \mathcal{P}_{n}^{p}}  \{X_k\} | X_i = s  \right ) \Pro ( X_i =s) = \sum_{s=1}^{a}\Pro \left ( s \leq \{X_{i+1}, \dots,  X_{ \mathcal{P}_{n}^{p}-1} \} \right ) \Pro ( X_i =s) \\
&= \sum_{s=1}^{a} p_s \left ( \sum_{r=s}^{a} p_r \right )^{ \mathcal{P}_{n}^{p}-i-1}. 
\end{split}
\end{equation*}
We could get rid of the conditional in the third line because the $X_k$ are mutually independent and thus $X_{ \mathcal{P}_{n}^{p}} \geq \{X_2, \dots, X_{ \mathcal{P}_{n}^{p}-1} \}$ is independent from the ordering of $ \{X_2, \dots, X_{ \mathcal{P}_{n}^{p}-1} \}$. Moreover $X_{ \mathcal{P}_{n}^{p}} > \{X_{ \mathcal{P}_{n}^{p}+1}, \dots, X_{n} \}$ concerns different $X_i$'s, so is also independent of $R_i$.
In total we thus get, for $2 < k \leq n$,
\begin{equation*}
\E[\mathcal{D}_n^{p} | \mathcal{P}_{n}^{p}=k] = \sum_{i=1}^{k-1} \E[R_i | \mathcal{P}_{n}^{p} = k] 
= \sum_{s=1}^{a} p_s \sum_{i=0}^{k-3}  \left ( \sum_{r=s}^{a} p_r \right )^{i} +1
= \sum_{s=2}^{a} p_s \frac{1 - \left ( \sum_{r=s}^{a} p_r \right )^{k-2}}{\sum_{r=1}^{s-1} p_r}+(k-2)p_1 +1\
\end{equation*}
and \begin{equation*}\E[\mathcal{D}_n^{p}| \mathcal{P}_{n}^{p} = 2] = 1.
\end{equation*}
Using the tower rule we then get after some simplifications an expression for the expectation and as an immediate consequence the asymptotic result, for details see \cite{ellatez}.
\hfill $\square$
\end{proof}

As a corollary when $p$ is the uniform distribution on  $[a]$ we obtain the following result whose proof is elementary but lenghty \cite{ellatez}. 

\begin{corollary} \label{cor:aRTExptDepth}
Let $\mathcal{D}_n^{a}$ denote the depth of node $n$ in an $a$-RT. Then 
\begin{equation*}
\begin{split}
\E\left [\mathcal{D}_n^{a}\right ] = &  H_{a-1} +1  + \frac{n-2}{a^{n}} - \frac{1}{a^{2n-3}}\sum_{s=1}^{a-1} \frac{(a-s)^{n-2}}{ s}   - \frac{1}{a^{n-2}} \sum_{s=1}^{a-1}  \sum_{s'=1}^{a-1} \frac{1}{s} \frac{{s' }^{n-1}  - \left( (a-s)(s'+1)\right )^{n-1}}{ss' +s -a }\\
& + \frac{1}{a^n} \sum_{s=1}^{a-1}
(n-2)\left (s+1\right)^{n}- (n-1)\left ( s+1\right)^{n-1}s  + (s+1) s^{n-1}.\\
\end{split}
\end{equation*}
Moreover asymptotically
\begin{equation*}
\frac{\E\left [\mathcal{D}_n^{a} \right ]}{n} \xrightarrow {n \to \infty} \frac{1}{a}
\end{equation*}
and as $a$ approaches infinity, we get the same expectation as for URTs:
\begin{equation*}
\E\left [\mathcal{D}_n^{a} \right ] \xrightarrow{a \to \infty} H_{n-1} .
\end{equation*}
\end{corollary}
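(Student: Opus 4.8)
The plan is to specialize Theorem \ref{ThmDepthBRT} to the uniform distribution $p_i = 1/a$ and then carry out the resulting sum simplifications, before handling the two asymptotic regimes separately. First I would substitute $p_i = 1/a$ into each of the six groups of terms in the formula for $\E[\mathcal{D}_n^p]$. Under this substitution $\sum_{r=1}^s p_r = s/a$, $\sum_{r=1}^{s-1} p_r = (s-1)/a$, and $\sum_{r=s}^a p_r = (a-s+1)/a$, so the indexing shifts and it is cleaner to reindex $s \mapsto s+1$ where a factor $1/\sum_{r=1}^{s-1} p_r = a/(s-1)$ appears, producing the $\sum_{s=1}^{a-1} 1/s$ sums visible in the stated result. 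The first double sum collapses telescopically in $s'$ to give the $H_{a-1}+1$ leading term together with the mixed $s,s'$ double sum; the $p_1 \sum_{s=2}^a (1/p_s)[\cdots]$ block becomes $\frac{1}{a^n}\sum_{s=1}^{a-1}[(n-2)(s+1)^n - (n-1)(s+1)^{n-1}s + (s+1)s^{n-1}]$ after the reindexing, and the remaining telescoping sum $\sum_{s=2}^a((s/a)^{n-1}-((s-1)/a)^{n-1})$ contributes the $a^{-(n-1)}$ correction that is absorbed into the small error terms. Most of this is bookkeeping; I would note that the computations are elementary but lengthy and refer the reader to \cite{ellatez} for the full details, stating only the final closed form.

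For the asymptotics in $n$ with $a$ fixed, the cleanest route is to go back to \eqref{equ:DepthBRTninf} of Theorem \ref{ThmDepthBRT}, which already gives $\E[\mathcal{D}_n^p]/n \to p_1$; setting $p_1 = 1/a$ immediately yields $\E[\mathcal{D}_n^a]/n \to 1/a$. Alternatively one can read this off the explicit formula by observing that every term except the $p_1$-weighted block is $O(1)$ in $n$ (each geometric base is strictly less than $1$ since $a$ is fixed, so powers like $((a-s)/a)^{n-2}$ vanish), while the $\frac{n-2}{a^n}$ term and the $(n-2)(s+1)^n/a^n$ term inside the last sum are the only ones growing linearly, and when $s = a-1$ the latter gives $(n-2)(1)\cdot a^n/a^n$... more carefully, the dominant contribution is $(n-2) p_1 = (n-2)/a$ arising from the $(k-2)p_1$ piece of $\E[\mathcal{D}_n^a \mid \mathcal{P}_n^a = k]$ weighted by $\Pro(\mathcal{P}_n^a = k)$, whose mean position is asymptotically $n$. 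Either way the limit $1/a$ follows.

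For the limit $a \to \infty$ with $n$ fixed, I would use the same strategy as in the proof of the expectation of the number of branches and in Theorem \ref{thm:compare}: the $a$-RT converges to the URT as $a \to \infty$ because the digits $X_i$ become distinct with probability tending to $1$, so $\mathcal{D}_n^a \to_d \mathcal{D}_n^{\mathrm{URT}}$ and, by uniform integrability (the depth is bounded by $n$), $\E[\mathcal{D}_n^a] \to \E[\mathcal{D}_n^{\mathrm{URT}}] = H_{n-1}$, the classical value for URTs. One can also verify this directly from the closed form by the method used for the branches: write each geometric-type summand as $\frac{1}{a}\cdot\frac{1-((a-s)/a)^{m}}{1-(a-s)/a}$, expand as a geometric sum $\sum_{\ell} a^{-\ell}(\cdot)$, and apply $\sum_{s=1}^a s^k = \frac{a^{k+1}}{k+1} + O(a^k)$ to extract the leading constant, with all $O(1/a)$ remainders vanishing. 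The main obstacle is purely the algebraic bookkeeping in the first part — correctly tracking the index shifts and telescoping cancellations that turn the six-block $p$-BRT formula into the stated five-term $a$-RT formula — rather than anything conceptual; the two asymptotic claims are then short.
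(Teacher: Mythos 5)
Your proposal is correct and follows essentially the same route as the paper, which likewise obtains the corollary by substituting the uniform distribution $p_i=1/a$ into Theorem \ref{ThmDepthBRT} and defers the lengthy but elementary simplifications to \cite{ellatez}. The two asymptotic claims are handled as you describe: the $n\to\infty$ limit follows immediately from \eqref{equ:DepthBRTninf} with $p_1=1/a$, and the $a\to\infty$ limit follows from the total-variation comparison with the URT (the depth analogue of Theorem \ref{thm:compare}) together with boundedness of the depth.
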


\begin{remark}
There are various results in the literature on central limit theorems for the number of records in random words, for example \cite{Gouet05} and \cite{Bai98}. However in both of these papers the potential number of records is assumed to be infinite, which is not the case here. We hope to adapt these for obtaining asymptotic results on the number of branches in a subsequent work. 
\end{remark}

\section{Conclusion} \label{sec:conclusion}

We saw that in a biased recursive tree constructed from a riffle shuffle permutation based on the cutting of the deck into $a$ piles, the maximum degree is $a$. Thus an interesting question would be to compare $a$-ary recursive tree with biased recursive trees and especially $a$-recursive trees. It is not clear if this common restriction on the degree of the nodes is the only property these trees have in common or if they are more similar that one might expect at first sight. In order to get insight concerning this relation a dynamic construction of biased recursive trees would be very useful. If such a construction exists it would probably be very different from the construction principles we know. Of course such a dynamic growth rule would be useful for many other questions as well.

Throughout the study of biased recursive trees, the use of riffle shuffles instead of uniformly random permutations stemmed from the fact that certain statistics could be expressed in terms of independent random variables. As mentioned above, there is another random permutation framework allowing such use of independence, namely, unfair permutations. See \cite{Prodinger} and \cite{AIP} for the definition and analysis for various statistics of unfair permutations. Replacing  riffle shuffles with unfair permutations can make a big difference   since the case equality in random words case disappears in unfair permutation setting, and we believe that this should be checked in a subsequent work. Since in that model the rank of $i$ is determined by the maximum of $i$ identically distributed independent uniform random variables, it probably has similarities to models where each node can choose among $k$ potential parents, see \cite{DSouza07}, \cite{Mahmoud10}.

As a last note, our motivation for studying BRTs was to be able to understand a general class of random trees with arbitrary connection probabilities. The idea is to show that the biased recursive trees  are dense in  a given class of trees and to then approximate the statistics of this non-uniform random tree model with BRT statistics, and then use the underlying independence. However, we were not able to make an important progress in this direction yet. It is also kept for future research. 

\bigskip 

\noindent \textbf{Acknowledgements:}  The second author is supported partially by TUBITAK BIDEB 2232.

\appendix

\section{Variance computation in  Theorem \ref{thm:kDesVarGeneral}}
 
 We will now calculate the variance of $Y_{\geq k, n}^{p}$, the number of nodes with at least $k$ descendants in a BRT. We will use the following expression for the variance:
\begin{eqnarray*}
\Var(Y_{\geq k, n}^{p}) &=& \Var \left( \sum_{i=2}^{n-k} C_{i}^{k} \right) 
= \E\left[\left (\sum_{i=2}^{n-k} C_{i}^{k} \right )^2\right] - \E\left [\sum_{i=2}^{n-k} C_{i}^{k} \right]^2 
=  \sum_{i=2}^{n-k}  \E\left[{C_{i}^{k}}^2 \right]- \sum_{i=2}^{n-k}  \E\left [C_{i}^{k} \right]^2\\
&& + 2 \sum_{i=2}^{n-k-1} \sum_{j=i+1}^{n-k} \E \left [ C_{i}^{k} C_{j}^{k} \right] - 2 \sum_{i=2}^{n-k-1} \sum_{j=i+1}^{n-k} \E \left [ C_{i}^{k} \right] \E \left [ C_{j}^{k} \right].  
\end{eqnarray*}
First of all we have
$$
\sum_{i=2}^{n-k} \E\left [{C_i^k}^2\right ] = \sum_{i=2}^{n-k} \E\left [{C_i^k}\right ] = (n-k-1)\sum_{s=1}^{a} p_s \left (\sum_{r=s}^{a} p_r  \right )^k, 
$$
and
$$\sum_{i=2}^{n-k} \E \left [C_i^k \right ]^2 = (n-k-1) \left (\sum_{s=1}^{a} p_s \left (\sum_{r=s}^{a} p_r  \right )^k \right )^2. $$
Since the events $C_i^k$ and $C_j^k$ are not mutually independent when $i< j \leq i+k$, we also need to express $\E[C_i^kC_j^k]$ for $i < j \leq i+k$. 

Let $2 \leq i < j \leq i+k$, then
\begin{equation*}
\begin{split}
&\E[C_i^k C_j^k] \\
&= \Pro\left (X_i \leq \{X_{i+1}, \dots, X_{i+k}\},  X_j \leq \{X_{j+1}, \dots, X_{j+k}\}\right )\\
%&= \Pro\left (X_i \leq \{X_{i+1}, \dots, X_{j-1}\},  X_i \leq X_j, X_j \leq \{X_{j+1}, \dots, X_{j+k}\}\right )\\
&= \sum_{s=1}^{a} \Pro\left (X_i \leq \{X_{i+1}, \dots, X_{j-1}\}, X_i \leq X_j, X_j \leq \{X_{j+1}, \dots, X_{j+k}\} | X_i =s\right )\Pro(X_i=s) \\
&= \sum_{s=1}^{a} \Pro\left (X_i \leq \{X_{i+1}, \dots, X_{j-1}\} | X_i \leq X_j, X_j \leq \{X_{j+1}, \dots, X_{j+k}\}, X_i =s\right )  \\
&\hspace{4em} \Pro\left (X_j \leq \{X_{j+1}, \dots, X_{j+k}\} | X_i \leq X_j, X_i =s\right ) \Pro\left (X_i \leq X_j | X_i =s\right ) \Pro(X_i=s) \\
&= \sum_{s=1}^{a} \Pro(X_i=s) \Pro\left (s \leq \{X_{i+1}, \dots, X_{j-1}\} \right) \Pro\left (s \leq X_j\right ) \Pro\left (X_j \leq \{X_{j+1}, \dots, X_{j+k}\} | s \leq X_j \right)  \\
&= \sum_{s=1}^{a} \Pro(X_i=s) \Pro\left (s \leq \{X_{i+1}, \dots, X_{j-1}\} \right) \sum_{r=s}^{a} \Pro\left (X_j = r \right ) \Pro\left (X_j \leq \{X_{j+1}, \dots, X_{j+k}\} | X_j = r\right)  \\
&= \sum_{s=1}^{a} \Pro(X_i=s) \Pro\left (s \leq \{X_{i+1}, \dots, X_{j-1}\} \right) \sum_{r=s}^{a} \Pro\left (X_j = r \right ) \Pro\left (r \leq \{X_{j+1}, \dots, X_{j+k}\}\right )  \\
&= \sum_{s=1}^{a} p_s  \left ( \sum_{u=s}^{a} p_u\right )^{j-i-1}\sum_{r=s}^{a} p_r \left ( \sum_{t=r}^{a} p_t \right )^k. \\
\end{split}
\end{equation*}

Now we have
\begin{equation*}
\begin{split}
\sum_{i=2}^{n-k-1} & \sum_{j =  i+1}^{n-k} \E[C_i^k C_j^k] - \sum_{i=2}^{n-k-1} \sum_{j =  i+1}^{n-k} \E[C_i^k]\E[C_j^k] \\
&= \sum_{i=2}^{n-2k} \sum_{j =  i+1}^{i+k} \E[C_i^k C_j^k]  - \sum_{i=2}^{n-2k} \sum_{j =  i+1}^{i+k}\E[C_i^k]\E[C_j^k] \\
& \quad + \sum_{i=n-2k+1}^{n-k-1} \sum_{j =  i+1}^{n-k} \E[C_i^k C_j^k]  - \sum_{i=n-2k+1}^{n-k-1} \sum_{j =  i+1}^{n-k} \E[C_i^k]\E[C_j^k],
\end{split}
\end{equation*}
where we eliminated all terms where $i+k<j$ and thus $\E[C_i^kC_j^k] - \E[C_i^k]\E[C_j^k]=0$. Moreover we separated the two sums because if $i+k > n-k$, only the $C_j^k$ until $n-k$, not until $i+k$ are relevant. We now consider all four expressions separately. We get
\begin{equation*}
\begin{split}
\sum_{i=2}^{n-2k}  \sum_{j =  i+1}^{i+k} \E[C_i^k C_j^k] &= \sum_{i=2}^{n-2k} \sum_{j =  i+1}^{i+k} \sum_{s=1}^{a} p_s  \left ( \sum_{u=s}^{a} p_u\right )^{j-i-1}\sum_{r=s}^{a} p_r \left ( \sum_{t=r}^{a} p_t \right )^k \\
&=  \sum_{i=2}^{n-2k} p_1  \sum_{j =  0}^{k-1}  \left ( \sum_{u=1}^{a} p_u\right )^{j}\sum_{r=1}^{a} p_r \left ( \sum_{t=r}^{a} p_t \right )^k \\
 &\hspace{2em} + \sum_{i=2}^{n-2k} \sum_{s=2}^{a} p_s  \sum_{j =  0}^{k-1}  \left ( \sum_{u=s}^{a} p_u\right )^{j}\sum_{r=s}^{a} p_r \left ( \sum_{t=r}^{a} p_t \right )^k \\
&=  (n-2k-1) p_1  k \sum_{r=1}^{a} p_r \left ( \sum_{t=r}^{a} p_t \right )^k \\
 &\hspace{2em} + (n-2k-1) \sum_{s=2}^{a} p_s  \frac{1-\left (\sum_{u=s}^{a} p_u\right )^k}{\sum_{u=1}^{s-1} p_u}
\sum_{r=s}^{a} p_r \left ( \sum_{t=r}^{a} p_t \right )^k .\\
\end{split}
\end{equation*}
Also
\begin{equation*}
\begin{split}
\sum_{i=2}^{n-2k} \sum_{j =  i+1}^{i+k} \E[C_i^k]\E[C_j^k]
&= \sum_{i=2}^{n-2k} \sum_{j =  i+1}^{i+k}  \left (\sum_{s=1}^{a} p_s \left (\sum_{r=s}^{a} p_r  \right )^k \right ) ^2 \\
& = (n-2k-1) k  \left (\sum_{s=1}^{a} p_s \left (\sum_{r=s}^{a} p_r  \right )^k \right ) ^2.\\
\end{split}
\end{equation*}
For the second part of the sum we get
\begin{equation*}
\begin{split}
 &\sum_{i=n-2k+1}^{n-k-1} \sum_{j = i+1}^{n-k} \E[C_i^k C_j^k] \\
&=  \sum_{i=n-2k+1}^{n-k-1} \sum_{j = i+1}^{n-k} \sum_{s=1}^{a} p_s  \left ( \sum_{u=s}^{a} p_u\right )^{j-i-1}\sum_{r=s}^{a} p_r \left ( \sum_{t=r}^{a} p_t \right )^k \\
&= \sum_{i=n-2k+1}^{n-k-1} \sum_{j = 0}^{n-k-i-1} p_1  \left ( \sum_{u=1}^{a} p_u\right )^{j} \sum_{r=1}^{a} p_r \left ( \sum_{t=r}^{a} p_t \right )^k\\
&\hspace{2em} + \sum_{i=n-2k+1}^{n-k-1} \sum_{s=2}^{a} p_s \frac{ 1- \left ( \sum_{u=s}^{a} p_u\right )^{n-k-i}}{1- \sum_{u=s}^{a} p_u} \sum_{r=s}^{a} p_r \left ( \sum_{t=r}^{a} p_t \right )^k  \\
&= \sum_{i=n-2k+1}^{n-k-1} (n-k-i) p_1 \sum_{r=1}^{a} p_r \left ( \sum_{t=r}^{a} p_t \right )^k\\
&\hspace{2em} +  \sum_{i=n-2k+1}^{n-k-1} \sum_{s=2}^{a} p_s \frac{ 1}{1- \sum_{u=s}^{a} p_u} \sum_{r=s}^{a} p_r \left ( \sum_{t=r}^{a} p_t \right )^k  \\
&\hspace{2em} - \sum_{i=n-2k+1}^{n-k-1} \sum_{s=2}^{a} p_s \frac{\left ( \sum_{u=s}^{a} p_u\right )^{n-k-i}}{1- \sum_{u=s}^{a} p_u} \sum_{r=s}^{a} p_r \left ( \sum_{t=r}^{a} p_t \right )^k\\
&= \frac{k(k-1)}{2} p_1 \sum_{r=1}^{a} p_r \left ( \sum_{t=r}^{a} p_t \right )^k\\ 
&\hspace{2em} + (k-1)\sum_{s=2}^{a} p_s \frac{ 1}{ \sum_{u=1}^{s-1} p_u} \sum_{r=s}^{a} p_r \left ( \sum_{t=r}^{a} p_t \right )^k \\
&\hspace{2em}  -  \sum_{s=2}^{a} p_s \frac{ 1}{ \sum_{u=1}^{s-1} p_u} \sum_{i=1}^{k-1} \left ( \sum_{u=s}^{a} p_u\right )^{i}  \sum_{r=s}^{a} p_r \left ( \sum_{t=r}^{a} p_t \right )^k\\
&\hspace{2em}  -  \sum_{s=2}^{a} p_s \frac{ 1}{ \sum_{u=1}^{s-1} p_u}\left (  \frac{1- \left ( \sum_{u=s}^{a} p_u\right )^{k}}{1-\sum_{u=s}^{a} p_u} -1 \right ) \sum_{r=s}^{a} p_r \left ( \sum_{t=r}^{a} p_t \right )^k.\\
\end{split}
\end{equation*} 
Moreover we have
\begin{equation*}
\begin{split}
 \sum_{i=n-2k+1}^{n-k-1} \sum_{j = i+1}^{n-k}  \E[C_i^k]\E[C_j^k]
 &=  \sum_{i=n-2k+1}^{n-k-1} \sum_{j = i+1}^{n-k} \left ( \sum_{s=1}^{a} p_s \left (\sum_{r=s}^{a} p_r  \right )^k \right )^2 \\
  &=  \frac{k(k-1)}{2}\left ( \sum_{s=1}^{a} p_s \left (\sum_{r=s}^{a} p_r  \right )^k \right )^2 . \\
\end{split}
\end{equation*}

After adding up these terms, we get  
\begin{equation*}
\begin{split}
\Var(Y_n^k) &= \sum_{i=2}^{n-k} \E[{C_i^k}^2] + 2 \sum_{i=2}^{n-2k} \sum_{j =  i+1}^{i+k} \E[C_i^k C_j^k] + 2 \sum_{i=n-2k+1}^{n-k-1} \sum_{j = i+1}^{n-k} \E[C_i^k C_j^k]\\
& \hspace{1ex} - \sum_{i=2}^{n-k} \E[C_i^k]^2 -  2 \sum_{i=2}^{n-2k} \sum_{j =  i+1}^{i+k} \E[C_i^k]\E[C_j^k] - 2 \sum_{i=n-2k+1}^{n-k-1} \sum_{j = i+1}^{n-k}  \E[C_i^k]\E[C_j^k]  \\
&=  (n-k-1)\sum_{s=1}^{a} p_s \left (\sum_{r=s}^{a} p_r  \right )^k \\
&\hspace{2em} + 2 (n-2k-1) p_1  k \sum_{r=1}^{a} p_r \left ( \sum_{t=r}^{a} p_t \right )^k \\
 &\hspace{2em} + 2 (n-2k-1) \sum_{s=2}^{a} p_s  \frac{1-\left (\sum_{u=s}^{a} p_u\right )^k}{\sum_{u=1}^{s-1} p_u}
\sum_{r=s}^{a} p_r \left ( \sum_{t=r}^{a} p_t \right )^k \\
&\hspace{2em} + 2 \frac{k(k-1)}{2} p_1 \sum_{r=1}^{a} p_r \left ( \sum_{t=r}^{a} p_t \right )^k\\ 
&\hspace{2em} +  2 (k-1)\sum_{s=2}^{a} p_s \frac{ 1}{ \sum_{u=1}^{s-1} p_u} \sum_{r=s}^{a} p_r \left ( \sum_{t=r}^{a} p_t \right )^k \\
&\hspace{2em}  -  2 \sum_{s=2}^{a} p_s \frac{ 1}{ \sum_{u=1}^{s-1} p_u}\left (  \frac{1- \left ( \sum_{u=s}^{a} p_u\right )^{k}}{1-\sum_{u=s}^{a} p_u} -1 \right ) \sum_{r=s}^{a} p_r \left ( \sum_{t=r}^{a} p_t \right )^k \\
& \hspace{2em} - (n-k-1) \left (\sum_{s=1}^{a} p_s \left (\sum_{r=s}^{a} p_r  \right )^k \right )^2 \\
&\hspace{2em} - 2 (n-2k-1) k  \left (\sum_{s=1}^{a} p_s \left (\sum_{r=s}^{a} p_r  \right )^k \right ) ^2 \\
&\hspace{2em} - 2 \frac{k(k-1)}{2}\left ( \sum_{s=1}^{a} p_s \left (\sum_{r=s}^{a} p_r  \right )^k \right )^2
\end{split}
\end{equation*}
After some grouping some terms and some simplifications this finally gives
\begin{equation*}
\begin{split}
\Var(Y_{\geq k, n}^p)= &\sum_{s=1}^{a} p_s \left (\sum_{r=s}^{a} p_r  \right )^k \left [(n-k-1)+ p_1\left(2nk -3k(k+1)\right) \right ] \\
&+2 \sum_{s=2}^{a} p_s  \frac{1}{\sum_{u=1}^{s-1} p_u} \sum_{r=s}^{a} p_r \left ( \sum_{t=r}^{a} p_t \right )^k \\
& \hspace{1em} \cdot \left [n-k-1  -(n-2k-1)\left (\sum_{u=s}^{a} p_u\right )^k - \frac{1- \left ( \sum_{u=s}^{a} p_u\right )^{k}}{1-\sum_{u=s}^{a} p_u}  \right ]\\
& -  \left ( \sum_{s=1}^{a} p_s \left (\sum_{r=s}^{a} p_r  \right )^k \right )^2 \left [ n(2k+1) - (3k+1)(k+1)\right ].
\end{split}
\end{equation*}
This immediately implies for $k$ fixed as $n \to \infty$,
\begin{equation*}
\begin{split}
\lim_{n \to \infty} \frac{\Var(Y_{\geq k, n}^{p})}{n} & =  \sum_{s=1}^{a} p_s \left (\sum_{r=s}^{a} p_r  \right )^k \left ( 2kp_1 + 1 -  (2k+1) \sum_{s=1}^{a} p_s \left (\sum_{r=s}^{a} p_r  \right )^k \right )  \\
&\hspace{1em}+2 \sum_{s=2}^{a} \frac{p_s}{\sum_{u=1}^{s-1} p_u} \sum_{r=s}^{a} p_r \left ( \sum_{t=r}^{a} p_t \right )^k \left [1 -\left (\sum_{u=s}^{a} p_u\right )^k \right ].\\
\end{split}
\end{equation*}
\hfill $\square$
\section{Proof of Corollary \ref{cor:aRTkDescendantsVar}}

If we choose the uniform distribution over $[a]$, Theorem \ref{thm:kDesVarGeneral} gives
\begin{equation*}
\begin{split}
\Var(Y_{\geq k , n}^{a})&=  \sum_{s=1}^{a} \frac{1}{a} \left (\sum_{r=s}^{a} \frac{1}{a}  \right )^k \left [(n-k-1)+ \frac{1}{a}\left(2nk -3k(k+1)\right) \right ] \\
& \quad +2 \sum_{s=2}^{a} \frac{1}{a}  \frac{1}{\sum_{u=1}^{s-1} \frac{1}{a}} \sum_{r=s}^{a} \frac{1}{a} \left ( \sum_{t=r}^{a} \frac{1}{a} \right )^k \\
& \qquad \cdot \left [n-k-1  -(n-2k-1)\left (\sum_{u=s}^{a} \frac{1}{a}\right )^k - \frac{1- \left ( \sum_{u=s}^{a} \frac{1}{a}\right )^{k}}{1-\sum_{u=s}^{a} \frac{1}{a}}  \right ]\\
& \quad -  \left ( \sum_{s=1}^{a} \frac{1}{a} \left (\sum_{r=s}^{a} \frac{1}{a}  \right )^k \right )^2 \left [ n(2k+1) - (3k+1)(k+1) \right ].
\end{split}
\end{equation*}
and after some simplifications, in particular using
\begin{equation*}
 \sum_{s=1}^{a}  \left (\sum_{r=s}^{a} \frac{1}{a}  \right )^k = \sum_{s=1}^{a} \left (\frac{a-s+1}{a}  \right )^k = \frac{1}{a^k}\sum_{s=1}^{a} s^k
 \end{equation*}
and
\begin{equation*}
\begin{split}
\sum_{s=2}^{a} \frac{1}{a}  \frac{1}{\sum_{u=1}^{s-1} \frac{1}{a}} \sum_{r=s}^{a} \frac{1}{a} \left ( \sum_{t=r}^{a} \frac{1}{a} \right )^k & = \sum_{s=2}^{a} \frac{1}{s-1} \sum_{r=s}^{a} \frac{1}{a} \left (\frac{a-r+1}{a} \right )^k \\
&\hspace{1em} = \frac{1}{a^{k+1}} \sum_{s=2}^{a} \frac{1}{s-1} \sum_{r=1}^{a-s+1} r^k ,
\end{split}
\end{equation*}
we get
\begin{equation*}
\begin{split}
\Var(Y_{\geq k , n}^{a})= &\frac{1}{a^{k+1}}\sum_{s=1}^{a} s^k\left [(n-k-1)+ \frac{1}{a}\left(2nk -3k(k+1)\right) \right ] \\
&+2 \frac{1}{a^{k+1}} \sum_{s=2}^{a} \frac{1}{s-1} \sum_{r=1}^{a-s+1} r^k \\
& \hspace{1em} \cdot \left [n-k-1  -(n-2k-1)\left ( \frac{a-s+1}{a}\right )^k - \frac{1- \left ( \frac{a-s+1}{a}\right )^{k}}{1-\frac{a-s+1}{a}}  \right ]\\
& -  \left (\frac{1}{a^{k+1}}\sum_{s=1}^{a} s^k \right )^2 \left [ n(2k+1) - (3k+1)(k+1)\right ].
\end{split}
\end{equation*}
For $n \to \infty$ this directly gives, as it also follows from Theorem \ref{thm:kDesVarGeneral},
\begin{equation*}
\begin{split}
\lim_{n \to \infty} \frac{\Var(Y_{\geq k , n}^{a})}{n} =& \frac{1}{a^{k+1}}\sum_{s=1}^{a} s^k\left [1 + \frac{2k}{a} - \frac{2k+1}{a^{k+1}}\sum_{s=1}^{a} s^k \right ] \\
& + \frac{2}{a^{k+1}} \sum_{s=1}^{a-1} \frac{1}{s} \left [1  - \left ( \frac{a-s}{a}\right )^k  \right ] \sum_{r=1}^{a-s+1} r^k. \\
\end{split}
\end{equation*}
\hfill $\square$
 
 \end{document}